\theoremstyle{plain}
\newtheorem{lemma}{Lemma}[section]
\newtheorem{proposition}{Proposition}[section]
\newtheorem{corollary}{Corollary}[section]
\newtheorem{cordef}{Corollary and definition}[section]
\newtheorem{theorem}{Theorem} 
\theoremstyle{remark}
\newtheorem{remark}{Remark}[section]
\newtheorem{example}{Example}[section]
\theoremstyle{definition}
\newtheorem{notaAdd}{Notation}[section]
\newtheorem{definition}{Definition}[section]
\newtheorem{claim}{Claim}[section]
\newtheorem{assumption}{Assumption}[section]
\providecommand{\keywords}[1]{{\small \textit{Keywords---} #1}}
\renewcommand{\qedsymbol}{$\blacksquare$}
\title[An energy method for rough PDEs]{An energy method for rough partial differential equations}
\author{Antoine Hocquet}
\address[A. Hocquet]{Institute of Mathematics,  Technical University Berlin, Stra\ss e des 17. Juni 136, 10623 Berlin, Germany}
\email{antoine.hocquet@wanadoo.fr}
\author{Martina Hofmanov\'a}
\address[M. Hofmanov\'a]{Institute of Mathematics,  Technical University Berlin, Stra\ss e des 17. Juni 136, 10623 Berlin, Germany}
\email{hofmanov@math.tu-berlin.de}
\thanks{Financial support by the DFG via Research Unit FOR 2402 is gratefully acknowledged.}
\keywords{rough paths, rough PDEs, energy method, weak solutions}
\date{}
\DeclareMathOperator*{\esssup}{ess\,sup}
\DeclareMathOperator*{\card}{\#}
\DeclareMathOperator*{\spt}{Supp}
\DeclareMathOperator{\id}{id}
\newcommand{\N}{\ensuremath{\mathbb N_0}}
\newcommand{\R}{\ensuremath{\mathbb R}}
\renewcommand{\L}{\ensuremath{L}} 
\newcommand{\Zc}{\ensuremath{\mathcal Z}}
\newcommand{\ps}[2]{\ensuremath{\big(#1,#2\big)}}
\newcommand{\PS}[2]{\ensuremath{\Big(#1,#2\Big)}}
\newcommand{\n}[2]{\ensuremath{\,\rule[-1mm]{0.15em}{1em}\,#1\,\rule[-1mm]{0.15em}{1em}\,_{#2,(2)}}
}
\newcommand{\nn}[2]{\ensuremath{\,\rule[-1mm]{0.15em}{1em}\,#1\,\rule[-1mm]{0.15em}{1em}\,_{#2,(\infty)}}
}
\newcommand{\nnn}[2]{\ensuremath{
\llparenthesis\hspace{0.15em}#1\hspace{0.15em}\rrparenthesis_{#2}
}
}
\newcommand{\nnnn}[2]{\ensuremath{\,\rule[-1mm]{0.15em}{1em}\,#1\,\rule[-1mm]{0.15em}{1em}\,_{#2}}
}
\DeclareMathOperator{\divergence}{div}
\renewcommand{\div}{\ensuremath{\divergence}}
\renewcommand{\d}{\ensuremath{\mathrm{d}}}
\newcommand{\D}{\ensuremath{\mathrm{D}}}
\newcommand{\ind}{\mathbf{1}}
\def\tsum{\begingroup\textstyle \sum\endgroup}
\renewcommand{\aa}{\ensuremath{\mathbf a}}
\newcommand{\bb}{\ensuremath{\mathbf b}}
\newcommand{\cc}{\ensuremath{\mathbf c}}
\newcommand{\uu}{\ensuremath{\mathbf u}}
\renewcommand{\u}{\ensuremath{\mathfrak{u}}}
\renewcommand{\v}{\ensuremath{\mathfrak{v}}}
\newcommand{\w}{\ensuremath{\mathfrak{w}}}
\newcommand{\F}{\ensuremath{\mathcal F}}
\newcommand{\Bc}{\ensuremath{\mathscr B}}
\renewcommand{\O}{\ensuremath{{\R^d}}}
\newcommand{\stO}{\ensuremath{{[s,t]\times\R^d}}}
\newcommand{\OO}{\ensuremath{{\R^d\times\R^d}}}
\newcommand{\G}{\ensuremath{\mathcal G}}
\newcommand{\B}{\ensuremath{\mathbf B}}
\newcommand{\BB}{\ensuremath{\mathbb B}}
\newcommand{\Z}{\ensuremath{\mathbf Z}}
\newcommand{\ZZ}{\ensuremath{\mathbb Z}}
\newcommand{\X}{\ensuremath{\mathbf X}}
\newcommand{\XX}{\ensuremath{\mathbb X}}
\renewcommand{\S}{\ensuremath{\mathbf S}}
\renewcommand{\SS}{\ensuremath{\mathbb S}}
\newcommand{\sigmab}{\ensuremath{\boldsymbol{\sigma}}}
\newcommand{\T}{\ensuremath{\mathcal T}}
\newcommand{\sym}{\ensuremath{\mathrm{sym}}}
\newcommand{\anti}{\ensuremath{\mathrm{anti}}}
\newcommand{\cv}[1]{\ensuremath{\underset{#1}{\longrightarrow}}}
\newcommand{\I}{\ensuremath{\mathcal I}}
\newcommand{\AC}{\ensuremath{\mathcal{A}\mathcal {C}}}
\newcommand{\V}{\ensuremath{\mathcal{V}}}
\renewcommand{\tt}{\ensuremath{\boldsymbol\tau\!}}
\newcommand{\m}{\ensuremath{\boldsymbol m_{\epsilon}^{x_-}\! }}
\newcommand{\df}{\ensuremath{\boldsymbol{\Delta} _\epsilon ^{x_-}\!}}
\newcommand{\trace}{\ensuremath{\boldsymbol \gamma_\Gamma  }}
\newcommand{\pp}{\ensuremath{\mathfrak p}}
\newcommand{\PP}{\ensuremath{\mathscr P}}
\begin{document}
 
\maketitle

\begin{abstract}
We present a well-posedness and stability result for a class of nondegenerate linear parabolic equations driven by geometric rough paths. More precisely, we introduce a notion of weak solution that satisfies an intrinsic formulation of the equation  in a suitable Sobolev space of negative order. Weak solutions are then shown to satisfy the corresponding energy estimates which are deduced directly from the equation. Existence is obtained by showing compactness of a suitable sequence of approximate solutions whereas uniqueness relies on a doubling of variables argument and a careful analysis of the passage to the diagonal. Our result is optimal in the sense that the assumptions on the deterministic part of the equation as well as the initial condition are the same as in the classical PDEs theory.
\end{abstract}

\tableofcontents

\section{Introduction}

The so-called variational approach, also known as the energy method, belongs among the most versatile  tools in the theory of partial differential equations (PDEs). It is especially useful for nonlinear problems with complicated structure which do not permit the use of (semi-) linear methods such as semigroup arguments, e.g. systems of conservation laws or equations appearing in fluid dynamics. In such cases,  solutions are often known or expected to develop singularities in finite time. Therefore, weak (or variational) solutions which can accommodate these singularities provide  a suitable framework for studying the behavior of the system in the long run. But even for linear or semi-linear problems, weak solutions are the natural notion of solution in cases where a corresponding mild formulation is not available, for instance due to low regularity of coefficients.

The construction of weak solutions via the energy method relies on basic a priori estimates which can be directly deduced from the equation at hand by considering a suitable test function. The equation is then satisfied in a weak sense, that is, as an equality in certain space of distributions. Within this framework, existence and uniqueness are usually established by separate arguments. The proof of existence often uses compactness of a sequence of approximate solutions. Uniqueness for weak solutions is much more delicate and in some cases even not known. Let us for instance mention problems appearing in fluid dynamics where the questions of uniqueness and regularity of weak solutions remain largely open.

It has been long recognized that addition of stochastic terms to the basic governing equations can be used to model an intrinsic presence of randomness as well as to account for other numerical, empirical or physical uncertainties. Consequently, the field of stochastic partial differential equations massively gained importance over the past decades. It relies on the (martingale based) stochastic It\^o integration theory, which gave a probabilistic meaning to  problems that are analytically ill-posed due to the low regularity of trajectories of the driving stochastic processes. Nevertheless, the drawback appearing  already in the context of stochastic differential equations (SDEs) is that the solution map which assigns a trajectory of the solution to a trajectory of the driving signal, known as the It\^o map, is measurable but in general lacks continuity. This loss of robustness has obvious negative consequences, for instance when dealing with numerical approximations or in filtering theory.

The theory of rough paths introduced by Lyons \cite{lyons1998differential} fully overcame the gap between  ordinary and stochastic differential equations and allowed for a pathwise analysis of SDEs. The highly nontrivial step is lifting the irregular noise to a bigger space in a robust way such that solutions to SDEs depend continuously on this lifted noise. More precisely, Lyons singled out the appropriate topology on the space of rough paths which renders the corresponding It\^o--Lyons solution map continuous as a function of a suitably enhanced driving path. As one of the striking consequences, one can allow initial conditions as well as the coefficients of the equation to be random, even dependent on the entire future of the driving signals - as opposed to the ``arrow of time'' and the associated need for adaptedness within It\^o's theory.
In addition, using the rough path theory one can consider drivers beyond the martingale world such as general Gaussian or Markov processes, in  contrast to It\^o's theory where only semimartingales may be considered.

The rough path theory can be naturally formulated also in infinite dimensions to analyze ODEs in Banach spaces. This generalization is, however, not appropriate for the understanding of rough PDEs. This is due to two basic facts. First, the notion of rough path encodes in a fundamental way the nonlinear effects of time varying signals without any possibility of including signals depending  in an irregular way on more parameters. Second, in an infinite dimensional setting the action of a signal (even finite dimensional) is typically described by differential or more generally unbounded operators. Due to these difficulties, attempts at application of the rough path theory in the  study  rough PDEs have been limited. Namely, it was necessary to  avoid unbounded operators by working with mild formulations or Feynman--Kac formulas or transforming the equation in order to absorb the rough dependence into better understood objects such as flow of characteristic curves.

These requirements pose strong limitations on the kind of results one is able to obtain  and the proof strategies are very different from classical PDE methods.
The most successful approaches to  rough PDEs do not even allow to characterize  solutions directly but only via a transformation to a more standard PDE problem.
However, there has been an enormous research activity in the field of rough path driven PDEs lately and the literature is growing very fast. To name at least a few results relevant for our discussion, we refer the reader to the works by Friz et al. \cite{caruana2009partial,caruana2011rough} where flow transformations were applied to fully nonlinear rough PDEs.
A mild formulation was at the core of many other works,  see for instance Deya--Gubinelli--Tindel \cite{deya2012non,gubinelli2010rough} for a semigroup approach to semilinear evolution equations; Gubinelli--Imkeller--Perkowski \cite{gubinelli2015paracontrolled} for the theory of paracontrolled distributions and Hairer \cite{hairer2014theory} for the theory of regularity structures dealing with singular SPDEs.

At this stage, the rough path theory has reached certain level of maturity and it is natural to ask whether one could find rough path analogues to standard PDEs techniques.
From this point of view various authors started to develop \emph{intrinsic} formulations of  rough PDEs which involve relations between certain distributions associated to the unknown and the driving rough path. Let us mention the work of Gubinelli--Tindel--Torrecilla~\cite{gubinelli_controlled_2014} on viscosity solutions to fully nonlinear rough PDEs, that of Catellier~\cite{catellier_rough_2015} on rough transport equations, Diehl--Friz--Stannat~\cite{diehl2017stochastic} for results based on Feynmann--Kac formula. Finally,  Bailleul--Gubinelli~\cite{bailleul2017unbounded} studied  rough transport equations and Deya--Gubinelli--Hofmanov\'a--Tindel \cite{deya2016priori} conservation laws driven by rough paths.

The last two works laid the foundation for the variational approach to rough PDEs: they introduced  a priori estimates for  rough PDEs based on a new rough Gronwall lemma argument. Consequently, it was possible  to derive bounds on various norms of the solution and obtain existence and uniqueness results bypassing the use of the flow transformation or mild formulations.
In addition, these  techniques were used \cite{deya2016one} in order to establish uniqueness for reflected rough differential equations, a problem which remained open in the literature as a suitable Gronwall lemma in the context of rough path was missing.

\bigskip

\paragraph{\bfseries A class of non-degenerate linear parabolic rough PDEs.}
In the present paper, we pursue the line of research initiated in \cite{bailleul2017unbounded,deya2016priori}. Our goal is to develop a variational approach to a class of linear parabolic rough PDEs with possibly discontinuous coefficients. To be more precise, we study existence, uniqueness and stability for rough PDEs of the form 
\begin{equation}\label{rough_PDE}
\left\{\begin{aligned}
&\d u-A(t,x)u\d t=\left(\sigma^{ki}(x) \partial _{i}u +\nu^k(x)u\right)\d \Z^k\enskip, \quad \text{on}\enskip \R_+\times\O\,,
\\
&u(0)=u_0\,,
\end{aligned}\right.
\end{equation}
where
$\Z\equiv((Z^k)_{0\leq k\leq K},(\ZZ^{\ell ,k})_{1\leq \ell ,k\leq K})$ is a \emph{geometric} rough path of finite $1/\alpha -$variation, with $\alpha \in(1/3,1/2]$.
Here and below a summation convention over repeated indexes is used.
Regarding the assumptions on the deterministic part of \eqref{rough_PDE}, we consider an elliptic operator $A$ in divergence form, namely,
\begin{equation}\label{nota:A}
A(t,x)u=\partial _{i} \big(a^{ij}(t,x)\partial _{j} u\big)+b^i(t,x)\partial _{i}u +c(t,x)u.
\end{equation}
The coefficents $a=(a^{ij})_{1\leq i,j\leq d}$, $b=(b^i)_{1\leq i\leq d}$, $c$ are possibly discontinous. More precisely, we assume that $a$ is symmetric and fulfills a uniform ellipticity condition (see Assumption  \ref{ass:a}). Moreover integrability conditions depending on the dimension $d$ of $\O $ are assumed for $b,c$  (see Assumption \ref{ass:b_c}). The coefficients in the noise term $\sigma=(\sigma^{ki})_{1\leq k\leq K,1\leq i\leq d}$ and $\nu=(\nu^k)_{1\leq k\leq K}$ possess $W^{3,\infty}$ and $W^{2,\infty}$ regularity, respectively. The initial condition $u_0$ belongs to $L^2$.

Let us emphasize that the geometricity of $\Z$ is essential to avoid any problem related to the so-called ``strong parabolicity'' requirement (in the case of It\^o calculus), see Remark \ref{rem:sto_par_1} and Remark \ref{rem:sto_par_2} below.
Similarly, working in the $1/\alpha $-variation setting rather than in H\"older spaces turns out to be crucial in order to deal with coefficients that are only integrable, see Remark \ref{rem:controls_abc}.

One can easily see that the above mentioned available approaches to rough PDEs (mild formulation, flow transformation, Feynman--Kac formula) do not apply in this setting. Let us stress that our assumptions on the deterministic part of \eqref{rough_PDE} coincide with the classical (deterministic) theory as presented for instance in the book by Ladyzhenskaya, Solonnikov and Ural'tseva \cite{ladyzhenskaya1968linear}. Consequently, there is no doubt that the very natural way to establish existence and uniqueness is the energy method.
For completeness, let us mention that problems similar to \eqref{rough_PDE} were  studied in \cite{caruana2009partial,diehl2017stochastic} (note however that both these references concern equations written in non-divergence form). In comparison to these results, the energy method has clear advantages in several aspects. First, it allows to significantly weaken the  required regularity of the coefficients and initial datum. Furthermore, the method does not rely on linearity  and thus represents the natural starting point towards more general nonlinear problems.

More precisely, the (unique) solution constructed in \cite{caruana2009partial} was obtained as a transformation of a \emph{classical} solution to a certain deterministic equation. For that reason, the coefficients $a,b$  needed to be of class $C^2_b$ with respect to the space variable and the initial condition had the same regularity, whereas the coefficient $\sigma$ belonged to $\mathrm{Lip}^\gamma$ for some $\gamma>\frac{1}{\alpha}+3$ (note that $c=0$, $\nu= 0$ in \cite{caruana2009partial}). Besides, the equation was solved in a limiting sense only: a solution is defined as a limit point of classical solutions to the  PDE obtained by replacing the driving rough path $\mathbf{Z}$ by its smooth approximation. Uniqueness  then corresponded to the fact that there was at most one limit point. We point out that our notion of uniqueness based on an intrinsic formulation of the equation (see Definition \ref{def:weak_sol}) is stronger as it compares solutions regardless of the way they were constructed.

In the paper \cite{diehl2017stochastic},  an intrinsic weak formulation of an equation of the form \eqref{rough_PDE} was introduced and existence of a unique weak solution proved. The approach was based on the Feynman--Kac formula and therefore the equation was solved backward in time. The result  required $a,\sigma\in C^3_b$, $b,c\in C^1_b$, $\nu\in C^2_b$ and the terminal condition in $C^0_b$. Uniqueness was obtained in the class of continuous and bounded weak solutions.

\bigskip

\paragraph{\bfseries An intrinsic notion of solution}
In order to conclude this introductory part, let us be more precise about our approach and results.
We recall that, at a heuristic level, the entries of the geometric rough path $\Z\equiv(Z,\ZZ)$  mimic the first and second order iterated integrals
\[
\int_s^t\d Z_r\qquad\text{and}\qquad \int_s^t\int_s^r\d Z_{r'}\otimes \d Z_r,
\]
respectively.
These quantities appear naturally in the process of expanding the equation describing $u.$
Namely, assuming that $u$ solves \eqref{rough_PDE}, we get formally
\begin{equation}\label{davie_form}
\begin{split}
u_{t}-u_s&=\int_s^t A(r )u_r\d r
+ Z_{st}^k\big(\sigma ^{ki}\partial_i+ \nu^k\big)u_s\\
&\qquad+\ZZ_{st}^{k\ell}\big(\sigma^{k i}\partial_i+\nu^k\big)\big(\sigma^{\ell j}\partial_j +\nu^\ell \big)u_s+o(t-s),\qquad 0\leq s\leq t\leq T.
\end{split}
\end{equation}
Following Davie's \cite{davie2007differential} interpretation of rough differential equations, one can actually consider \eqref{davie_form} as a definition of \eqref{rough_PDE}, where the smallness of the remainder has to be understood in a suitable Sobolev space of negative order. Roughly speaking, a function $u\in C([0,T];L^2)\cap L^2(0,T;W^{1,2})$ will be called a weak solution to \eqref{rough_PDE} provided \eqref{davie_form} holds true as an equality in $W^{-3,2}$. We remark that the corresponding functional setting is similar to the classical theory, i.e.\ we recognize the usual energy space $\mathscr B:=C([0,T];L^2)\cap L^2(0,T;W^{1,2})$ where weak solutions live. Nevertheless, the regularity required from the test functions is higher ($W^{3,2}$ contrary to $W^{1,2}$ in the classical theory). This is a consequence of the low regularity of the driving signal and the consequent need for a higher order expansion.

The first challenge is to derive the corresponding energy estimates leading to the proof of existence. In view of the formulation \eqref{davie_form}, it is clear that the main difficulty is to estimate the remainder term. Indeed, all the other terms in the equation are explicit and can be easily estimated. However, the only information available on the remainder is the equation \eqref{davie_form} itself. In fact, the definition of a weak solution is to be understood as follows: $u$ is a weak solution to \eqref{rough_PDE} provided the 2-index map given by
\[
u^\natural_{st}:=u_t-u_s-\int_s^t A(r )u_r\d r
- Z_{st}^k\big(\sigma ^{ki}\partial_i+ \nu^k\big)u_s
-\ZZ_{st}^{k\ell}\big(\sigma^{ki}\partial_i +\nu^k\big)\big(\sigma^{\ell j}\partial_j+\nu^\ell\big)u_s
\]
has finite $(1-\kappa)$-variation,  for some $\kappa\in(0,1)$, as a mapping with values in $W^{-3,2}$. It was observed in \cite{bailleul2017unbounded,deya2016priori} that there is a trade-off between space and time regularity and a suitable interpolation argument can be used in order to establish sufficient time regularity of the remainder estimated in terms of the energy norm. This is the core of the so-called rough Gronwall lemma argument which in turn yields the desired energy bound for the solution.

We point out that in view of the required regularity of test functions for \eqref{davie_form}, it is  remarkable that uniqueness in the class of weak solutions can be established. Indeed, this task requires to test the equation by the weak solution itself and it is immediately seen that the $W^{3,2}$-regularity  is far from being satisfied.
Nevertheless, as in \cite{bailleul2017unbounded,deya2016priori}, it is possible perform a tensorization argument which  corresponds to the doubling of variables technique known in the context of conservation laws: one considers  the equation satisfied by the product $u_t(x)u_t(y)$ and tested by a mollifier sequence $\epsilon ^{-d}\psi (\tfrac{x-y}{\epsilon }).$ The core of the proof is then  to  derive estimates uniform in $\epsilon$ in order to be able to pass to the diagonal $x=y$, i.e. to send $\epsilon \to0$. Once this is done, one obtains the equation for $u^2$ and proceeds similarly as in the existence part to derive the energy estimate. 

Nevertheless, there is a major difference between the derivation of the energy estimates in the existence part and in the proof of uniqueness. Namely, in order to establish a priori estimates needed for existence, one works on the level of sufficiently smooth approximations. This can be done e.g.\ by mollifying the driving signal and using classical PDE theory. Consequently, deriving the evolution of $u^2$ is not an issue and can be easily justified. On the other hand, within the proof of uniqueness, the only available regularity is that of weak solutions and the most delicate part is thus to  show that $u^2$ satisfies the right equation.

As discussed above, an important advantage of the rough path theory, as opposed to the stochastic integration theory, is the continuity of the solution map in appropriate topologies. Also in our setting, we obtain the following  Wong-Zakai type result which follows immediately from our construction. Let $(Z^\epsilon)$ be a sequence of smooth paths whose canonical lifts $\mathbf{Z}^\epsilon\equiv(Z^\epsilon,\ZZ^\epsilon)$  approximate  $\mathbf{Z}\equiv(Z,\ZZ)$ in the rough path sense. Let $u^\epsilon$ be the  weak solution of \eqref{rough_PDE} driven by $Z^\epsilon$ obtained by classical arguments. Then we show that $u^\epsilon$ converges in $L^2(0,T;L^2_{\rm{loc}})$ to $u$, which is a solution to \eqref{rough_PDE} driven by $Z$.

\subsection*{Outline of the paper}
In Section \ref{sec:preliminaries},
we introduce the main concepts and notations that we use throughout the article,
and we state our main results, Theorem \ref{thm:solvability} and Theorem \ref{thm:continuity}.
Section \ref{sec:analysis} is devoted to the presentation of the main tools necessary to obtain a priori estimates for rough PDEs.
The so-called energy inequality, appears  at the core of our variational approach.
It arises as a consequence of the a priori estimates, Proposition \ref{pro:apriori}, applied to the remainder term  in the equation governing the evolution of the square of the solution. This is discussed in Section \ref{sec:energy}. 
In Section \ref{sec:tensorization} we introduce  the above mentioned tensorization argument, which is required in the proof of uniqueness. We present it in a rather general way,  motivating the particular choice of function spaces.
The uniqueness part, which is treated in Section \ref{sec:uniqueness}, is the most delicate part of our proof. Finally, the proof of existence as well as stability  is presented in Section \ref{sec:continuity}.
Several auxiliary results are collected in the Appendix.

\section{Preliminaries}
\label{sec:preliminaries}

\subsection{Notation}

We will denote by $\N$ the set of all non-negative integers, that is $\N:=\{0,1,2,\dots\},$
and we will write $\R_+$ to denote the set of non-negative real numbers, that is
$\R_+:=[0,\infty).$
Let us recall the definition of the increment operator, denoted by $\delta$. If $g$ is a path defined on $[0,T]$ and $s,t\in[0,T]$ then $\delta g_{st}:= g_t - g_s$, if $g$ is a $2$-index map defined on $[0,T]^2$ then $\delta g_{s\theta t}:=g_{st}-g_{s\theta }-g_{\theta t}$.
For a closed time interval $I\subset\R_{+},$ 
we denote by $\Delta ,\Delta ^2$ the simplexes
\begin{equation}\label{nota:simplexes}
\Delta=\Delta_I :=\{(s,t)\in I^2\,,\,s\leq t\}\,,\quad \Delta ^2=\Delta^2_I:= \{(s,\theta ,t)\in I^3\,,\,s\leq \theta \leq t\}\,.
\end{equation}
We call \emph{control on $I$} any superadditive map $\omega :\Delta \to \R_+,$ that is, for all $(s,\theta ,t)\in \Delta ^2$ there holds
\begin{equation}
\label{axiom:control}
\omega (s,\theta )+\omega (\theta ,t)\leq \omega (s,t).
\end{equation}
(Note that the property \eqref{axiom:control} implies in particular that $\omega (t,t)=0$ for any $t\in[0,T].$)
We will call $\omega $ \emph{regular} if in addition $\omega $ is continuous.

Given a Banach space $E$ equipped with a norm $|\cdot |_{E}$, and $\alpha >0,$
we denote by 
$\V_ 1^{\alpha }(I;E)$
the set of paths $g:I\to E$ admitting left and right limits with respect to each of the variables, and such that there exists a regular control $\omega:\Delta \to\R_+$ with
\begin{equation}
\label{def:omega_a}
|\delta g_{st}|_{E}\leq \omega (s,t)^{\alpha  }\,,
\end{equation} 
for every $(s,t)\in\Delta .$
Similarly, we denote by $\V_ 2^{\alpha }(I;E)$ the set of $2$-index maps $g:\Delta \to E$ such that $g_{tt}=0$ for every $t\in I$ and 
\begin{equation}
\label{def:omega_a_2}
|g_{st}|_{E} \leq \omega (s,t)^{\alpha }\,,
\end{equation} 
for all $(s,t) \in \Delta ,$ and some regular control $\omega .$ Note that $g\in \V^{\alpha }_1(I;E)$ if and only if $\delta g\in \V^{\alpha }_2(I;E)$.
If $I=[0,T],$ the corresponding semi-norm $|\cdot |_{\V_2^\alpha }$ in $\V^a_2(I;E)$ is given by the infimum of $\omega (0,T)^\alpha $ over every possible control $\omega $ such that \eqref{def:omega_a_2} holds. Alternatively, it is equivalently defined as the $1/\alpha $-variation of $g,$ that is
\begin{equation}
\label{nota:p-var}
|g|_{1/\alpha \mathrm{-var};I;E}:=\left(\sup_{\pp\equiv (t_i)\in\PP(I)}\sum_{(\pp)}|g_{t_i t_{i+1}}|_{E}^{1/\alpha }\right)^\alpha ,
\end{equation}
where 
\[
\PP(I):=\Big\{\pp\subset I:\exists l\geq 2,\pp=\{t_1=\inf I<t_1<\dots<t_l=\sup I\}\Big\}
\]
is the set of partitions of $I,$
and where, throughout the paper, we use the notational convention:
\begin{equation}
\label{summation_pp}
\sum_{(\pp)}h_{t_i t_{i+1}}\enskip  \overset{\mathrm{def}}{=}\enskip \sum\nolimits_{i=1}^{\#\pp-1}h_{t_i t_{i+1}}
\end{equation}
for any 2-index element $h.$
The equivalence between the semi-norms $|\cdot |_{\V^\alpha }$ and $|\cdot |_{1/\alpha \mathrm{-var}}$ will be investigated in Remark \ref{rem:control} below (these quantities are in fact equal).

By $ \V^{\alpha }_{2,\text{loc}}(I;E)$ we denote the space of maps $g:\Delta\to E$ such that there exists a countable covering $\{I_k\}_k$ of $I$ satisfying $g \in  \V^{\alpha }_2(I_k;E)$ for any $k$.
We also define the set $\V^{1+}_2(I;E)$ of ``negligible remainders'' as
\[
\V^{1+}_2(I;E):=\bigcup_{\alpha >1} \V_ 2^{\alpha }(I;E),
\]
and similarly for $\V^{1+}_{2,\text{loc}}(I;E).$

Furthermore, we denote by $\AC(I;E)\subset \V^1_1(I;E)$ the set of \emph{absolutely continuous functions}, that is: $f\in \AC(I;E)$ if and only if for every $\epsilon >0$ there exists $\delta >0$ such that for every non-overlapping family $(s_1,t _1),\dots,(s _n,t _n)\subset I$ with $\sum (t_i-s_i)<\delta ,$ then 
\[
\sum_{1\leq i\leq n}|\delta f_{s_i t_i}|_{E}<\epsilon \,.
\]

Given $\alpha \in(1/3,1/2]$ and $K\in\N,$ recall that a continuous ($K$-dimensional) $1/\alpha $-rough path is a pair \begin{equation}
\label{analytic_conditions}
\Z\equiv(Z^{k },\ZZ^{k \ell })_{1\leq k ,\ell \leq K}\quad \text{in}\quad \V^{\alpha }_2(I;\R^{K}) \times \V^{2\alpha}_2(I;\R^{K\times K}),                                                                                                                                                                                                                                                  \end{equation}
such that Chen's relations hold, namely:
\begin{equation}\label{chen_relations_gene}
\delta Z_{s\theta t}^k =0\,,\quad \delta \ZZ_{s\theta t}^{k \ell }=Z_{s\theta }^k Z_{\theta t}^\ell \,,\quad\text{for}\enskip (s,\theta ,t)\in\Delta ^2\,,\enskip 
\enskip 1\leq k ,\ell \leq K.
\end{equation}
We refer the reader to the monographs \cite{friz2010multidimensional,friz2014course} for a thorough introduction to the rough path theory.
We will denote by $\mathscr C^\alpha (I;\R^K) $ the set of all continuous rough paths as above.
It is endowed with the metric $d_{\mathscr C^\alpha }$ defined by
\begin{equation}
\label{RP_metric}
d_{\mathscr C^\alpha }(\Z^1,\Z^2):=|Z_{0\cdot }^1-Z_{0\cdot }^2|_{L^\infty(I)} + |Z^1-Z^2|_{1/\alpha -\mathrm{var}} +|\ZZ^1-\ZZ^2|_{1/(2\alpha) -\mathrm{var}},
\end{equation} 
for which it is complete. Note that, although $d_{\mathscr C^\alpha }$ is a function of the difference $\Z^1-\Z^2,$ it is definitely not a norm, because $\mathscr C^\alpha (I;\R)$ is \emph{not} a linear space.
For any element $z\in\V^1(I;\R^K),$ there is a canonical lift $S_{2}(z)\equiv(Z,\ZZ)$ in $\mathscr C(I;\R^K)$ defined as
\[
Z:=\delta z, \quad \text{and for}\enskip k,\ell \in\{1,\dots,K\}:\quad 
\ZZ^{k\ell }_{st}:=\iint_{\Delta _{[s,t]}}\d z_{r_2}^{\ell }\d z_{r_1}^{k},\quad (s,t)\in\Delta _I.
\]

We shall denote by $\mathscr C^\alpha _g(I;\R^K)\subset \mathscr C^\alpha (I;\R^K)$ the subset consisting of \emph{geometric rough paths}. By definition, $\mathscr C^\alpha _g(I;\R)$ corresponds to the closure of the canonical lifts $S_2(z),$ where $z\in\V^1(I;\R^K)$, with respect to the rough path metric \eqref{RP_metric}.
For a geometric rough path $\Z\equiv(Z,\ZZ),$ the symmetric part of the $2$-tensor $(\ZZ^{k\ell })_{1\leq k,\ell \leq K}$ is fully determined by the first component as follows:
\begin{equation}
\label{geometricity}
\sym\ZZ_{st}^{k\ell }:=\frac{\ZZ^{k \ell }+\ZZ^{\ell k }}{2}=\frac12Z^{k }Z^{\ell },\quad \text{for any}\enskip 1\leq k ,\ell \leq K
\end{equation}
(see \cite[Chapter 9]{friz2010multidimensional}).

We will consider the usual Lebesgue and Sobolev spaces in the space variable: $L^p\equiv L^p(\O)$, $W^{k,p}\equiv W^{k,p}(\O),$ for $k\in\N,$ and $p\in[1,\infty],$ and denote their respective norms by $|\cdot |_{L^p},|\cdot |_{W^{k,p}}.$
The notation $\|\cdot\|_{r,q}$ will be used for the norm in $L^r(I;L^q(\O)),$ namely:
\[
\|f\|_{r,q}:=\left(\int_{I}\left(\int_\O |f(t,x)|^q\d x\right)^{r/q}\d t\right)^{1/r}
\]
(note that in contrast  to the literature on deterministic PDEs, we write the time variable first, or with a subscript). To emphasize the domain of time integrability we sometimes write
$
\|\cdot\|_{r,q;I}.
$
We recall that $W^{k,p}_{\rm{loc}}(\R^d)$ is the space of functions $f$ such that  for every compact set $K\subset\R^d$ there holds $f|_K\in W^{k,p}(K)$.

We also write $C(I;E)$ for the space of continuous function with values in some Banach space $E,$ endowed with the norm $\|f\|_{C(I;E)}:=\sup_{r\in I}|f_r|_{E}.$
Throughout the paper we will make extensively use of an energy space which is the Banach space
\begin{equation}
 \label{nota:Bc}
 \Bc=\Bc_{I}:=C(I;L^2(\R^d))\cap L^2(I;W^{1,2}(\R^d))
\end{equation}
and we will sometimes write $\Bc_{s,t}$ as an abbreviation for $\Bc_{[s,t]},$  $s<t.$

Given Banach spaces $X,Y,$
we will denote by $\L(X,Y)$ the space of linear, continuous maps from $X$ to $Y,$ endowed with the operator norm.
For $f$ in $X^*:=\L(X,\R),$ we denote the dual pairing by
\[
~_{X^*}\big\langle f,g \big\rangle{_X}
\]
(i.e.\ the evaluation of $f$ at $g\in X$).
When they are clear from the context, we will simply omit the underlying spaces and write $\langle f,g\rangle$ instead.

\subsection{Unbounded rough drivers}
In the sequel, we call a \emph{scale} any sequence $(\G_k,$ $\nnnn{\cdot }{k})_{k\in \N}$ of Banach spaces such that $\G_{k+1}$ is continuously embedded into $\G_k,$ for each $k\in \N$.

For each $k\in \N,$ we will also denote by $\G_{-k}$ the topological dual of $\G_k,$ i.e.\
\begin{equation}\label{nota:E_k_negative}
\G_{-k}:= (\G_{k})^*\,.
\end{equation}
Except for the case $\G_k:= W^{k,2},$ we do \emph{not} identify $\G_0$ with its dual, hence 
a (minor) disadvantage of the latter notation is that in general 
\[
\G_0\neq \G_{-0}\,.
\]
\begin{definition}\label{def:rough_driver}
For a given $\alpha \in(1/3,1/2]$, a pair of $2$-index maps $\B\equiv(B,\BB)$ is called a \emph{continuous unbounded $1/\alpha$-rough driver} with respect to the scale $(\G_k)_{k\in \N}$, if 
\begin{enumerate}[label=(RD\arabic*)]
 \item \label{RD1}
$B_{st}\in \L\left(\G_{-k},\G_{-k-1}\right)$ for $k\in\{0,1,2\},$
$\BB_{st}\in\L\left(\G_{-k},\G_{-k-2}\right)$ for $k\in\{0,1\},$
and there exists a regular control $\omega_B :\Delta\to\R_+ $ such that
\begin{equation}\label{bounds:rough_drivers}
\left[
\begin{aligned}
&|B_{st}|_{\L(\G_{-0},\G_{-1})}\,,\quad
|B_{st}|_{\L(\G_{-2},\G_{-3})}
\leq\omega _B(s,t)^\alpha\,, 
\\
&|\BB_{st}|_{\L(\G_{-0},\G_{-2})}\,,\quad
|\BB_{st}|_{\L(\G_{-1},\G_{-3})}
\leq \omega_B (s,t)^{2\alpha }\,,
\end{aligned}
\right.
\end{equation}
for every $(s,t)\in\Delta .$
\item \label{RD2}
Chen's relations hold true, namely, for every $(s,\theta ,t)\in\Delta ^2:$
\begin{equation}\label{chen}
\delta B_{s\theta t}=0\,,\quad \delta \BB_{s\theta t}=B_{\theta t}B_{s\theta } \,,
\end{equation}
as linear operators on $\G_{-k},k=0,1,2,$ resp.\ $k=0,1.$
\end{enumerate}
\end{definition}

We will always understand the driver $\B$ \emph{in the sense of distributions}, namely 
we assume that each $\G_k$ for $k\in \N$ is canonically embedded into $\mathscr D'(\O),$ and that for $u\in \G_{-0},$ $(s,t)\in\Delta ,$
the element $B_{st}u$ (resp.\ $\BB_{st}u$) is \emph{defined}
as the linear functional on $\G_{1}$ (resp.\ $\G_2$)
given by
\[
\begin{aligned}
&\langle B_{st}u,\phi\rangle  =\langle u,B^*_{st}\phi \rangle\,,\quad \forall \phi \in \G_{1}\,,
\\
&\text{resp.\ }\quad \langle\BB_{st}u,\psi\rangle =\langle u,\BB^*_{st}\phi\rangle\,,\quad \forall \phi \in\G_{2}\,.
\end{aligned}
\]
In the context of \eqref{rough_PDE}
we let
\begin{equation}\label{nota:formal_adj}
\left[\begin{aligned}
B^*_{st}\phi &:=
Z^{k}_{st}\left(-\partial _i(\sigma ^{ki}\phi) +\nu^k \phi\right) \,,
\\
\BB^*_{st}\phi &:=
\ZZ^{k\ell}_{st}\Big(\partial _j(\sigma ^{\ell j}\partial _i(\sigma ^{ki}\phi ))
- \partial _j(\sigma ^{\ell j}\nu^k  \phi )-\nu^{\ell }\partial _i(\sigma ^{ki}\phi )+\nu ^\ell \nu ^k \phi 
\Big)
\end{aligned}\right.
\end{equation}
for a.e.\ $x\in\O$ and every $\phi \in W^{2,\infty},$ assuming that the coefficents $\sigma ,\nu $ are regular enough
(see the assumption \eqref{assumption_sigma_nu} below).
\subsection{Assumptions on the coefficients and the main result}
Throughout the paper, we assume that we are given an elliptic operator $A$ under the form \eqref{nota:A}, which is to be understood weakly, namely for $u\in\Bc,$ $\phi \in W^{1,2}(\R^d),$ and $(s,t)\in\Delta _I,$ we let
\begin{multline}
\label{def:A}
\big\langle \int_s^tAu_r\d r,\phi\big\rangle :=\iint_{[s,t]\times\R^d}\Big[-a^{ij}(r,x)\partial _ju_r(x)\partial _i\phi(x)
+b^i(r,x)\partial _iu_r(x)\phi(x)
\\
+ c(r,x)u_r(x)\phi(x) \Big]\d x\d r,
\end{multline}
where the assumptions below will ensure in particular that the former makes sense.
\begin{assumption}[Uniform ellipticity condition]
\label{ass:a}
The matrix $(a^{ij}(t,x))_{1\leq i,j\leq d}$ is symmetric, measurable with respect to each of its variables and there
exist constants $M,m>0$ such that for a.e.\ $(t,x):$
\begin{equation}\label{uniform_ellipticity}
m \sum_{i=1}^d\xi _i^2\leq \sum_{1\leq i,j\leq d} a^{ij}(t,x)\xi _i\xi _j\leq M\sum_{i=1}^d\xi _i^2\,,\quad \xi \in \R^d\,.
\end{equation}
\end{assumption}
We also need assumptions on integrability of the coefficients $b$ and $c$, depending on the spatial dimension $d\in\mathbb{N}.$
\begin{assumption}
 \label{ass:b_c}
We assume
\begin{equation}\label{assumption:b_c}
b\in L^{2r}\big(I;L^{2q}(\O;\R^d)\big)\,\quad \text{and}\quad c\in L^r\big(I;L^q(\O;\R)\big)\,,
\end{equation}
where the numbers $r\in[1,\infty)$ and $q\in(\max(1,\tfrac{d}{2}),\infty)$ are such that
\begin{equation}
\label{values:r_q}
\frac{1}{r}+\frac{d}{2q}\leq 1\,.
\end{equation}
\end{assumption}
The reason for these restrictions will appear in the use of the following interpolation inequality.
\begin{proposition}\label{pro:interpolation_inequality}
If $f$ belongs to $L^\infty(I;L^2)\cap L^2(I;W^{1,2}),$ then  one has also $f\in L^\rho (I;L^\kappa  )$ for every $\rho, \kappa $ such that
\begin{equation}\label{conditions}
\frac1\rho +\frac{d}{2\kappa }\geq \frac d4\quad\text{and}\quad \left[\begin{aligned}
&\rho \in[2,\infty]\,, \quad \kappa \in[2,\tfrac{2d}{d-2}]\quad\text{for}\enskip d>2
\\
&\rho \in(2,\infty]\,,\quad \kappa \in[2,\infty)\quad \text{for}\enskip d=2
\\
&\rho \in[4,\infty]\,,\quad \kappa \in[2,\infty]\quad \text{for}\enskip d=1\,.
\end{aligned}\right.
\end{equation}
In addition, there exists a constant $\beta>0 $ (not depending on $f$ in the above space) such that
\begin{equation}\label{interpolation_inequality}
\|f\|_{L^\rho (I;L^\kappa )}\leq \beta \|f\|_{L^\infty(I;L^2)\cap L^2(I;W^{1,2})}\equiv\beta \left(\|\nabla f\|_{L^2(I;L^2)}+\esssup_{s\in I}|f_r|_{L^2}\right)\,.
\end{equation}
\end{proposition}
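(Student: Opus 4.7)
The strategy is the classical combination of a spatial Gagliardo--Nirenberg inequality, applied pointwise in time, with a Hölder estimate in the time variable. I would proceed in three steps.

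\textbf{Step 1: Pointwise-in-time spatial interpolation.} For every fixed $t\in I$, I would invoke the Gagliardo--Nirenberg inequality on $\R^d$: there exists a constant $C=C(d,\kappa)$ such that, for all $g\in W^{1,2}(\R^d)$,
\begin{equation*}
|g|_{L^\kappa(\R^d)} \;\leq\; C\, |g|_{L^2(\R^d)}^{1-\theta}\,|\nabla g|_{L^2(\R^d)}^{\theta},
\end{equation*}
where $\theta$ is fixed by the scaling relation
\begin{equation*}
\frac{1}{\kappa}=\frac{1-\theta}{2}+\theta\Bigl(\frac{1}{2}-\frac{1}{d}\Bigr),
\qquad \text{i.e.}\qquad \theta=\frac{d(\kappa-2)}{2\kappa}\in[0,1].
\end{equation*}
For $d>2$ this directly covers $\kappa\in[2,\tfrac{2d}{d-2}]$. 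For $d=2$, any $\kappa\in[2,\infty)$ is admissible, with $\theta=1-2/\kappa$. For $d=1$, I would use the standard one-dimensional embedding $|g|_{L^\infty}\leq C |g|_{L^2}^{1/2}|\nabla g|_{L^2}^{1/2}$ together with interpolation against $L^2$ to cover all $\kappa\in[2,\infty]$.

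\textbf{Step 2: Integration in time with Hölder.} Raising the Gagliardo--Nirenberg bound to the power $\rho$ and integrating over $I$, I obtain
\begin{equation*}
\int_I |f_t|_{L^\kappa}^\rho\,\d t
\;\leq\; C^\rho\,\|f\|_{L^\infty(I;L^2)}^{\rho(1-\theta)}\int_I |\nabla f_t|_{L^2}^{\rho\theta}\,\d t.
\end{equation*}
If $\rho\theta\leq 2$, Hölder's inequality in time with conjugate exponents $2/(\rho\theta)$ and $2/(2-\rho\theta)$ yields
\begin{equation*}
\int_I |\nabla f_t|_{L^2}^{\rho\theta}\,\d t \;\leq\; |I|^{\,1-\rho\theta/2}\,\|\nabla f\|_{L^2(I;L^2)}^{\rho\theta}.
\end{equation*}
Taking the $\rho$-th root and applying the elementary inequality $a^{1-\theta}b^\theta\leq a+b$ produces an estimate of the form \eqref{interpolation_inequality} with $\beta$ depending on $d,\rho,\kappa$ and $|I|$.

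\textbf{Step 3: Matching the admissible exponents.} The condition $\rho\theta\leq 2$ translates, via $\theta=d(\kappa-2)/(2\kappa)$, into
\begin{equation*}
\frac{\rho\, d(\kappa-2)}{2\kappa}\leq 2
\;\Longleftrightarrow\;
1\leq \frac{4}{\rho d}+\frac{2}{\kappa}
\;\Longleftrightarrow\;
\frac{1}{\rho}+\frac{d}{2\kappa}\geq \frac{d}{4},
\end{equation*}
which is exactly the hypothesis \eqref{conditions}. The case $d=1$ is handled by observing that the sharpest admissible pair is $(\rho,\kappa)=(4,\infty)$, giving the $L^4_tL^\infty_x$ estimate, and all other $(\rho,\kappa)$ in the stated range then follow by interpolation with $L^\infty_t L^2_x$. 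The case $d=2$ requires the strict inequality $\rho>2$ (equivalently $\theta<1$) so that the Hölder step in Step~2 is non-degenerate.

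The only subtle point is the treatment of the borderline exponents in low dimension, where the Gagliardo--Nirenberg embedding $W^{1,2}\hookrightarrow L^\kappa$ either fails ($d=2$, $\kappa=\infty$) or degenerates; these are excluded in \eqref{conditions} precisely to ensure $\theta<1$ (or the appropriate substitute) so that Step~2 produces a finite constant.
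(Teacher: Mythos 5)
Your proof is correct, but it takes a genuinely different route from the paper. The paper invokes the complex interpolation identity $L^\rho(I;L^\kappa)=[L^\infty(I;L^2),L^2(I;L^p)]_\theta$ with $\theta=2/\rho$ and $p=2\big(1+\rho(\tfrac1\kappa-\tfrac12)\big)^{-1}$, then applies Young's inequality and the Sobolev embedding $W^{1,2}\hookrightarrow L^p$, checking that the hypothesis \eqref{conditions} forces $2\leq p\leq \tfrac{2d}{d-2}$ (the cases $d=1,2$ being left to the reader, with a reference to Ladyzhenskaya--Solonnikov--Ural'tseva for the critical case). You instead use the spatial Gagliardo--Nirenberg inequality pointwise in time with $\theta=d(\kappa-2)/(2\kappa)$, followed by H\"older in the time variable under the constraint $\rho\theta\leq 2$, and you verify correctly that this constraint is exactly $\tfrac1\rho+\tfrac{d}{2\kappa}\geq\tfrac d4$, while $\theta\in[0,1]$ precisely on the stated $\kappa$-ranges in each dimension. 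Your argument is more elementary and self-contained (no vector-valued complex interpolation needed), treats all dimensions uniformly, and directly produces the gradient-only right-hand side of \eqref{interpolation_inequality}; the price is a constant depending on $|I|$, which is also implicit in the paper's proof (via $\|f\|_{L^2L^2}\leq T^{1/2}\|f\|_{L^\infty L^2}$ inside the $W^{1,2}$ norm), so this is harmless. Two cosmetic points: for $\rho=\infty$ the phrase ``raise to the power $\rho$ and integrate'' should be replaced by taking the essential supremum in time (the constraint then forces $\kappa=2$, so the claim is trivial there), and your closing remark for $d=1$ via interpolation between $L^4L^\infty$ and $L^\infty L^2$ is redundant, since Steps 1--2 already cover that case; neither affects correctness.
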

\begin{proof}
The proof relies on the complex interpolation
(see \cite{lunardi2009interpolation})
\begin{equation}\label{interpolating}
L^\rho L^\kappa =[L^\infty L^2,L^2L^p]_{\theta }\,,
\end{equation}
for $\theta :=\frac 2\rho $ and $p:=2(1+ \rho (\frac 1\kappa -\frac12))^{-1}.$
Then, thanks to Young Inequality, write 
\[
\|f\|_{L^\rho L^\kappa }\leq C\|f\|^{1-2/\rho  }_{L^\infty L^2} \|f\|^{2/\rho }_{L^2L^p}\leq C'\left(\|f\|_{L^\infty L^2}+ \|f\|_{L^2L^p}\right)\,,
\]
and \eqref{interpolation_inequality} follows from the Sobolev embedding theorem. For instance when $d>2,$
we have $W^{1,2}(\O )\hookrightarrow L^p(\O )$ if
\begin{equation}
 \label{ineq_p}
2\leq p\equiv \frac{2}{1-\rho (\frac12-\frac1\kappa )}\leq \frac{2d}{d-2}\,,
\end{equation}
but from $\frac1\rho +\frac{d}{2\kappa }\geq \frac d4,$ it holds $\rho \leq \frac2d(\frac12-\frac{1}{\kappa })^{-1},$ and thus
$p\leq 2/(1- \frac2d)\equiv\frac{2d}{d-2},$ and since $p\geq 2,$ it implies \eqref{ineq_p}.
The cases $d=1,2$ are left to the reader.
For a proof under the stronger assumption that $\frac1\rho +\frac{d}{2\kappa }=\frac d4,$ we refer to Theorem 2.2 in \cite[Chap.\ II (3.4)]{ladyzhenskaya1968linear}.
\end{proof}
As an immediate consequence of Proposition \ref{pro:interpolation_inequality}, we have the following.
Let $r$ and $q$ be as in \eqref{values:r_q} and let $u$ in $\Bc.$
It is easily seen that \eqref{values:r_q} implies \eqref{conditions} for the exponents $\rho := \frac{2r}{r-1}$ and $\kappa :=\frac{2q}{q-1}.$
Hence, for some universal constant $\beta \equiv\beta (r,q),$ one has 
\begin{equation}
\label{consequence_rq}
\|u\|_{\frac{2r}{r-1},\frac{2q}{q-1}}\leq \beta \|u\|_{\Bc}\,.
\end{equation}

Concerning the coefficients of the driver, we assume the following.
\begin{assumption}
\label{ass:sigma_nu}
The coefficients $\sigma ,\nu $ are such that
\begin{equation}\label{assumption_sigma_nu}
\sigma \in W^{3,\infty}(\O ,\R^{d\times K})
\quad\text{and}\quad 
\nu \in W^{2,\infty}(\O ,\R^K)\,.
\end{equation}
\end{assumption}

Throughout the paper, we will extensively make use of the following scales
\begin{equation}\label{scale1}
\left[\begin{aligned}
&W^{k,2}(\O)\,,\quad \n{\cdot }{k}:=|\cdot |_{W^{k,2}},
\\
&W^{k,\infty}(\O )\,,\quad \nn{\cdot }{k}:=|\cdot |_{W^{k,\infty}},
\end{aligned}\right.
\end{equation}
for $k\in \N,$ and their corresponding negative-exponent counterparts as in \eqref{nota:E_k_negative}. Note that, except when $p=\in\{1,\infty\},$ Sobolev spaces of negative order are usually defined by the relation $W^{-k,(p)}=\left(W^{k,p/(p-1)}\right)^*,$ hence here we have for instance $\nnnn{\cdot }{-1,(p)}=|\cdot |_{W^{-1,\frac{p}{p-1}}}.$
Owing to Leibniz rule, it is seen that for a.e.\ $x$ in $\O$ and every $(s,t)$ in $\Delta :$
\[
|\nabla ^{k}B_{st}^*\phi|
\leq \omega _Z(s,t)^\alpha \left(|\sigma |_{W^{k+1,\infty}}+|\nu |_{W^{k,\infty}}\right)\sum_{0\leq \ell \leq k+1}|\nabla ^\ell \phi |
\,,\quad k=0,1,2\,,
\]
whereas
\[
|\nabla ^k\BB_{st}^*\phi|
\leq \omega _Z(s,t)^{2\alpha }\left(|\sigma |_{W^{k+2,\infty}}+|\nu |_{W^{k+1,\infty}}\right)\sum_{0\leq \ell \leq k+2}|\nabla ^\ell \phi |
\,,\quad k=0,1\,.
\]
The driver $\B=(B,\BB)$ defined in \eqref{nota:formal_adj} fulfills the properties of Definition \ref{def:rough_driver}, namely
\begin{equation}\label{B_rough_driver}
\left[
\begin{aligned}
&\B\enskip \text{is an $1/\alpha $-unbounded rough driver, with respect to}\enskip 
\\
&\text{each of the scales $(W^{k,2})_{k\geq 0}$ and $(W^{k,\infty})_{k\geq 0}.$}
\end{aligned}\right.
\end{equation}
Moreover, we can set
\begin{equation}\label{nota:omega_B}
\omega _B(s,t):=C\left(|\sigma |_{W^{3,\infty}},|\nu |_{W^{2,\infty}}\right)\omega _Z(s,t)
\end{equation}
for a constant depending on the indicated quantities.

We now need a suitable notion of solution for the problem \eqref{rough_PDE}.
The following definition corresponds to that given in \cite{bailleul2017unbounded}
(see also \cite{diehl2017stochastic}).
\begin{definition}\label{def:weak_sol}
Let $T>0$, $I:=[0,T]$ and $\alpha \in(1/3,1/2]$.
Let $\B=(B,\BB)$ be a continuous $1/\alpha $-unbounded rough driver with respect to a given scale $(\G_k)_{k\in \N}, $
and let $\mu \equiv\mu_{t}$ be a path of finite variation in $\G_{-1}.$

A continuous path $g:I\to \G_{-0}$ is called a \emph{weak solution} to the
rough PDE
\begin{equation}
\label{rough_PDE_gene}
\d g=\d \mu+\d \B g
\end{equation}
on $I\times \O,$
with respect to the scale $(\G_k)_{k\in \N}$, if for every $\phi \in \G_3$, and every $(s,t)\in\Delta $, there holds
\begin{equation}\label{nota:solution}
\langle\delta g_{st},\phi \rangle=\langle\delta \mu _{st},\phi \rangle+ \langle g_s,B^*_{st}\phi \rangle +\langle g_s,\BB_{st}^*\phi \rangle +\langle g_{st}^\natural,\phi \rangle\,,
\end{equation}
for some $g^\natural\in \V_ {2,\text{loc}}^{1+}(I;\G_{-3}).$
\end{definition}

It will be seen in particular that for $u\in\Bc,$ the drift $\mu _t:=\int_0^tAu_r\d r,$ where $Au$ is as in \eqref{def:A}, defines indeed an element of $\V^1_1(I;W^{-1,2})$ (in fact $\mu\in \AC(I;W^{-1,2})$). Hence the problem \eqref{rough_PDE} formulates now as finding $u\in\Bc$ such that
\begin{equation}
\label{zakai}
\begin{cases}
\d u=(Au)\d t +\d \B u\,,\quad \text{in}\enskip I\times \R^d\,,
\\
u_0\quad \text{given in}\enskip L^2(\R^d).
\end{cases}
\end{equation}
where the equation must be understood in the sense of Definition \ref{def:weak_sol}, with respect to the scale $(W^{k,2}(\R^d))_{k\in\N}.$
We have now all in hand to state our main results.
\begin{theorem}\label{thm:solvability}
Fix $T>0,$ $I:=[0,T],$ assume that $u_0\in L^2$, and consider coefficients $a,b,c,\sigma ,\nu $ such that the assumptions \ref{ass:a},\ref{ass:b_c} and \ref{ass:sigma_nu} hold.
There exists a unique weak solution $u$ to \eqref{zakai} such that 
\begin{equation}\label{nota:F}
u\in\Bc_{0,T}:=C(I;L^2)\cap L^2(I;W^{1,2})\,.
\end{equation}
In addition the following It\^o formula holds for the square of $u$:
\begin{equation}\label{ito_formula}
\langle\delta u^2_{st},\phi \rangle=2\int_{s}^t\langle Au,u\phi \rangle\d r+\langle u^2_s,\hat B_{st}^*\phi \rangle +\langle u^2_s,\hat\BB^*_{st}\phi \rangle+ \langle u^{2,\natural}_{st},\phi \rangle\,,
\end{equation}
for every $\phi $ in $W^{3,\infty}$ and $(s,t)$ in $\Delta ,$
where $\hat\B$ is the unbounded rough driver obtained by replacing $\nu $ by $\hat \nu :=2\nu $ in \eqref{nota:formal_adj}, and where the remainder $u^{2,\natural}$ belongs to $\V_ {2,\rm{loc}}^{1+}(I;(W^{3,\infty})^*).$

Finally the $\Bc$-norm of $u$ is estimated as
\begin{equation}\label{energy_bound}
\|u\|_{\Bc_{0,T}}\leq C\Big(\alpha ,T,m,M,\|b\|_{2r,2q},\|c\|_{r,q},\omega _Z,|\sigma |_{W^{3,\infty}},|\nu |_{W^{2,\infty}}\Big)|u_0|_{L^2}\,,
\end{equation}
for a constant depending on the indicated quantities, but not on $u_0\in L^2.$
\end{theorem}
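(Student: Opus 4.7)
The plan is to prove the theorem in four structured steps: Itô formula, energy bound, existence, and uniqueness, with the Itô formula being the key technical ingredient from which everything else follows by now-standard rough PDE arguments.

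First, I would establish the Itô formula \eqref{ito_formula}. This is the central structural result. The strategy is the tensorization argument announced in the outline: given a weak solution $u$ of \eqref{zakai}, consider the tensor product $u_t(x)u_t(y)$ on $\R^d\times\R^d$, which formally satisfies a rough PDE whose driver is built from $\B$ acting on each variable separately, together with cross terms whose symmetric part is, by the geometricity \eqref{geometricity}, exactly $\tfrac12 Z_{st}^k Z_{st}^\ell$. Testing against a mollifier $\epsilon^{-d}\psi((x-y)/\epsilon)$ yields an $\epsilon$-dependent equation, and the point is to show that the passage $\epsilon\to 0$ (the ``passage to the diagonal'') produces the claimed relation \eqref{ito_formula} with $\hat\nu=2\nu$. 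The doubled $\nu$ comes from the Leibniz identity $\partial_i(u^2)=2u\,\partial_i u$ and the reorganization $2u(\sigma^{ki}\partial_i u+\nu^k u)=\sigma^{ki}\partial_i u^2+2\nu^k u^2$. Bookkeeping the diagonal passage in negative Sobolev norms gives $u^{2,\natural}\in\V^{1+}_{2,\mathrm{loc}}(I;(W^{3,\infty})^*)$.

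Second, with \eqref{ito_formula} in hand, I would deduce the energy bound \eqref{energy_bound}. Test \eqref{ito_formula} against a smooth cut-off $\chi_R^2$ with $\chi_R\equiv 1$ on $\{|x|\leq R/2\}$ and $\mathrm{supp}\,\chi_R\subset\{|x|\leq R\}$, then send $R\to\infty$. The drift term $2\int_s^t\langle Au_r,u_r\rangle\,\d r$, by Assumption \ref{ass:a}, absorbs $-2m\int_s^t|\nabla u_r|_{L^2}^2\,\d r$, while the $b,c$-terms are controlled using Hölder's inequality, the integrability Assumption \ref{ass:b_c}, and the interpolation estimate \eqref{consequence_rq}. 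The two rough driver contributions $\langle u_s^2,\hat B^*_{st}1\rangle$ and $\langle u_s^2,\hat\BB^*_{st}1\rangle$ are bounded pointwise in $s,t$ by a constant multiple of $\omega_B(s,t)^\alpha |u_s|_{L^2}^2$ and $\omega_B(s,t)^{2\alpha}|u_s|_{L^2}^2$ respectively, by \eqref{B_rough_driver}. Applying the a~priori estimate of Proposition \ref{pro:apriori} to the remainder $u^{2,\natural}$ in $(W^{3,\infty})^*$, combined with the rough Gronwall lemma, closes the bound and yields \eqref{energy_bound} with the stated constant.

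Third, for existence I would argue by approximation. Pick smooth paths $z^\epsilon$ such that the canonical lifts $\Z^\epsilon:=S_2(z^\epsilon)$ converge to $\Z$ in $d_{\mathscr C^\alpha}$; along a subsequence one may assume $\omega_{Z^\epsilon}\leq C\omega_Z$ uniformly. Classical parabolic theory (\cite{ladyzhenskaya1968linear}) produces a unique weak solution $u^\epsilon\in\Bc$ of the regularized problem, which is automatically a weak solution in the sense of Definition \ref{def:weak_sol} with driver $\B^\epsilon$. The uniform bound \eqref{energy_bound} applies to $u^\epsilon$ by the previous step, giving compactness in $\Bc$ (weakly in $L^2W^{1,2}$, strongly in $L^2(L^2_{\mathrm{loc}})$ via Aubin--Lions). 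Continuity of each term in \eqref{nota:solution} under $d_{\mathscr C^\alpha}$-convergence of the driver lets one pass to the limit, identifying the limit as a weak solution of \eqref{zakai}.

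Fourth, uniqueness is immediate by linearity: two weak solutions $u_1,u_2$ with the same $u_0$ have difference $v=u_1-u_2$ which is itself a weak solution of \eqref{zakai} with zero initial data (and the Itô formula for $v^2$ holds in the same way). Plugging $v_0=0$ into \eqref{energy_bound} gives $\|v\|_\Bc=0$.

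The main obstacle is the first step: one cannot simply ``test the equation by $u$'' because $u$ is far from $W^{3,2}$-regular, which is the regularity demanded by \eqref{nota:solution}. The tensorization-plus-diagonalization scheme of Section~\ref{sec:tensorization} circumvents this, but it relies on delicate commutator estimates for the operators in \eqref{nota:formal_adj} against the mollifier, uniform in $\epsilon$. It is precisely at this step that Chen's relations \eqref{chen}, the explicit form \eqref{nota:formal_adj} and, above all, the geometricity \eqref{geometricity} of $\Z$ become indispensable in order to see that the unwanted cross terms cancel in the limit $\epsilon\to0$.
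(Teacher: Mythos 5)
Your plan is essentially the paper's own proof, just reordered: the It\^o formula via tensorization and passage to the diagonal (Sections \ref{sec:tensorization}--\ref{sec:uniqueness}), the energy bound via Proposition \ref{pro:apriori} and the rough Gronwall lemma (Section \ref{sec:energy}), existence by smooth approximation of the driver, uniform energy bounds and compactness (Section \ref{sec:continuity}), and uniqueness by linearity from the a priori estimate. The only cosmetic differences (testing with a cut-off $\chi_R^2$ instead of $\phi=1\in W^{3,\infty}$, and invoking Aubin--Lions where the paper uses the $W^{-1,2}$-equicontinuity plus Ascoli argument, which still requires the increment bound \eqref{bounds:gubinelli2}) do not change the substance of the argument.
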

The uniqueness and existence parts of the above theorem will be proven separately.
Existence will be addressed via an approximation argument, and it will be jointly proved with the following continuity result (see Section \ref{sec:continuity}).

\begin{theorem}\label{thm:continuity}
Under the conditions of Theorem \ref{thm:solvability},
let $\mathcal{P}_{m,M}$ be defined as those coefficents $a^{ij}\in L^\infty(I\times\O)$ such that Assumption \ref{ass:a} holds, and let $\mathscr C_g^\alpha $ be the space of continuous geometric rough paths of finite $1/\alpha $-variation.
The solution map 
\begin{equation}\label{cont}
\left[
\begin{aligned}
&\mathfrak{S}:L^2\times \mathcal{P}_{m,M}\times L^{2r}L^{2q}\times L^rL^q\times W^{3,\infty}\times W^{2,\infty}\times \mathscr C_g^\alpha 
\longrightarrow C\big(I;W^{-1,2}_{\mathrm{loc}}\big)\cap L^2(I;L^2_{\mathrm{loc}})
\\
&(u_0,a,b,c,\sigma ,\nu,\Z)\longmapsto \mathfrak{S}(u_0,a,b,c,\sigma ,\nu,\Z):=
\begin{cases}
\text{the solution given}\\
\text{by Theorem \ref{thm:solvability}}
\end{cases}
\end{aligned}\right.
\end{equation}
is continuous.
\end{theorem}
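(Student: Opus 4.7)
Let $(u_0^n,a^n,b^n,c^n,\sigma^n,\nu^n,\Z^n)$ converge in the product topology to $(u_0,a,b,c,\sigma,\nu,\Z)$ and write $u^n := \mathfrak{S}(u_0^n,a^n,b^n,c^n,\sigma^n,\nu^n,\Z^n)$. The plan is to extract a convergent subsequence via compactness, identify the limit by passing to the limit in the weak formulation, and then invoke the uniqueness statement of Theorem \ref{thm:solvability} to upgrade subsequential convergence to convergence of the whole sequence.

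Since the constant in the energy estimate \eqref{energy_bound} depends continuously on its arguments, one has $\sup_n\|u^n\|_{\Bc}<\infty$, so $(u^n)$ is weakly-$\ast$ precompact in $L^\infty(I;L^2)$ and weakly precompact in $L^2(I;W^{1,2})$. Inspecting \eqref{nota:solution}, each building block in the equation satisfied by $u^n$ carries a modulus controlled by $\|u^n\|_{\Bc}$: the drift lies in $\AC(I;W^{-1,2})$, the first-order rough term in $\V^{\alpha}_2(I;W^{-1,2})$, the second-order one in $\V^{2\alpha}_2(I;W^{-2,2})$, while the rough Gronwall / sewing machinery of Section \ref{sec:analysis} promotes the built-in $o(|t-s|)$-regularity of $u^{n,\natural}$ to a uniform bound in $\V^{\beta}_{2,\mathrm{loc}}(I;W^{-3,2})$ for some $\beta>1$ independent of $n$. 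Combined, these supply an equicontinuity modulus for $(u^n)$ valued in $W^{-3,2}_{\mathrm{loc}}$, and an Aubin--Lions argument along the chain $W^{1,2}\hookrightarrow\hookrightarrow L^2_{\mathrm{loc}}\hookrightarrow W^{-3,2}_{\mathrm{loc}}$ extracts a subsequence (not relabeled) converging strongly in $L^2(I;L^2_{\mathrm{loc}})$ and in $C(I;W^{-1,2}_{\mathrm{loc}})$.

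Next, for each $\phi\in C_c^\infty(\R^d)\subset W^{3,\infty}$, I pass to the limit in \eqref{nota:solution}. The elliptic term $\iint a^{n,ij}\partial_j u^n\,\partial_i\phi\,\d x\,\d r$ converges thanks to $a^n\to a$ in $L^\infty$ combined with $\nabla u^n\rightharpoonup\nabla u$ in $L^2(I;L^2)$; the lower-order $b^n,c^n$-terms converge by strong convergence in their respective Bochner spaces, together with the interpolation bound \eqref{consequence_rq} and the strong $L^2(I;L^2_{\mathrm{loc}})$-convergence of $u^n$. The rough terms converge because \eqref{nota:formal_adj} depends continuously on $(\sigma^n,\nu^n,\Z^n)$ in $W^{3,\infty}\times W^{2,\infty}\times\mathscr C_g^\alpha$, which yields $\langle u^n_s,B^{n,*}_{st}\phi\rangle\to\langle u_s,B^{*}_{st}\phi\rangle$ and likewise for $\BB^{n,*}$, pointwise in $(s,t)$. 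The remainder $u^{n,\natural}_{st}$ converges by subtraction in $(W^{3,\infty})^*$, and its uniform $\V^\beta$-bound is inherited by the limit, which therefore lies in $\V^{1+}_{2,\mathrm{loc}}(I;W^{-3,2})$. This identifies the limit $u$ as a weak solution to \eqref{zakai} driven by the limit data; the uniqueness part of Theorem \ref{thm:solvability} forces $u=\mathfrak{S}(u_0,a,b,c,\sigma,\nu,\Z)$, and the standard subsequence-of-subsequence argument promotes this to convergence of the full sequence.

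The main obstacle is the compactness step, specifically the derivation of a \emph{uniform} $\V^{\beta}_{2,\mathrm{loc}}(I;W^{-3,2})$-bound on the remainders $u^{n,\natural}$: only the qualitative $o(|t-s|)$-control is built into the definition of a weak solution, so upgrading it to a quantitative H\"older-type bound with constants independent of $n$ requires the sewing / rough Gronwall apparatus of Section \ref{sec:analysis} to be carried out uniformly in $n$. This is possible because each input of that apparatus --- the energy norm and the controls $\omega_B^n$, $\omega_Z^n$ associated with $(\sigma^n,\nu^n,\Z^n)$ --- remains uniformly bounded along the approximating sequence, thanks to the first step and the assumed convergence of the data.
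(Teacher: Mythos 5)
Your proposal is correct and follows essentially the same route as the paper's Section \ref{sec:continuity}: uniform $\Bc$-bounds from the energy estimate, uniform remainder/increment estimates from Proposition \ref{pro:apriori} giving equicontinuity in a negative Sobolev space, compactness (the paper uses Ascoli with the compact embedding $L^2\hookrightarrow W^{-1,2}_{\mathrm{loc}}$ where you invoke an Aubin--Lions/Simon argument, a cosmetic difference), term-by-term passage to the limit in the weak formulation, and identification of the limit via uniqueness plus the subsequence-of-subsequences argument. The only point to keep in mind is that the uniform equicontinuity of the drift controls rests on the strong convergence (hence uniform integrability in time) of $b^n,c^n$ together with the uniform energy bound, exactly as the paper spells out around \eqref{an_estimate}.
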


Some remarks are in order.

\begin{remark}
Note that by interpolation it follows from \eqref{energy_bound} and \eqref{cont} that the solution map is continuous in $L^\kappa(I;W^{\gamma,2}_{\mathrm{loc}})$ whenever $\gamma=\theta-(1-\theta)$ and $\kappa\leq 2/\theta$ for some $\theta\in (0,1)$.
\end{remark}

\begin{remark}
The map $u\equiv u_t(x)$ given by Theorem \ref{thm:solvability}  solves $\d u=A u\d t + \d \B u$
in the sense that for every $\phi  $ in $W^{3,2}$
and all $(s,t)$ in $\Delta ,$ it holds
\begin{equation}
 \label{integral_equation}
\langle\delta u_{st},\phi \rangle 
=\int_s^t\langle Au,\phi  \rangle\d r
+\int_s^t\langle (\sigma \cdot \nabla  +\nu )u,\phi \rangle \d \Z\,,
\end{equation} 
where the latter makes sense as a rough integral -- note that, as a by-product of Proposition \ref{pro:apriori} below, we have that for each $1\leq \ell \leq K$ the path $t\mapsto\langle(\sigma^\ell  \cdot \nabla +\nu^\ell  )u_t,\phi  \rangle$ \emph{is controlled by $(Z^k)_{1\leq k\leq K}$} with Gubinelli derivative $t\mapsto\big\langle (\sigma^\ell  \cdot\nabla +\nu^\ell )(\sigma^k \cdot\nabla +\nu^k )u_t,\phi \big\rangle\,,1\leq k\leq K.$
\end{remark}
\begin{remark}[the case of time-dependent coefficents]
It should be possible to assume that $\sigma ,\nu $ are coefficients depending on space and time, under the assumption that the path $t\mapsto(\sigma (t,\cdot ),\nu (t,\cdot )) $ be ``controlled by $Z$''. This stems from the fact that, roughly speaking,
rough integrals are themselves rough paths (up to a canonical lift). See \cite[Chap.\ 7]{friz2014course} for related results in finite dimensions.

To be more precise, assume for simplicity that $\nu =0,$ $K=1,$
and let $V:=W^{3,\infty}(\O;\R^{d}).$ Consider $\sigma \in \V^\alpha ([0,T];V),$ \emph{controlled by $Z,$} in the sense that there is some $\sigma _t'(x)$ in $\V^\alpha ([0,T];V)$
such that
\[
\Big((s,t)\in\Delta\mapsto\sigma _s-\sigma _s'Z_{st}\Big)\quad \text{belongs to}\quad  \V^{2\alpha }_2([0,T];V)\,.
\]
We can then define the driver $\B$ as the 2-index family of unbounded operators given
for $\varphi $ in $W^{1,2}$ by
\[
B_{st}\varphi := \int_s^t \sigma \cdot \nabla \varphi \d \Z 
=\lim_{\substack{|\pp|\to0\\ \pp\in\PP([s,t])}}\sum_{(\pp)}\sigma _{t_i}\cdot \nabla \varphi Z_{t_i t_{i+1}}+\sigma '_{t_{i}}\cdot \nabla \varphi  \ZZ_{t_i t_{i+1}}\,,
\]
where we take the limit in the space $W^{-1,2},$ and make use of the summation convention \eqref{summation_pp}.
Next, one defines a second component for $\B,$ via the rough integral
\[
\BB_{st}\varphi :=\int_{s}^t B_{s,r} \d B_r(\varphi)  =
\lim_{\substack{|\pp|\to0\\ \pp\in\PP([s,t])}}\sum_{(\pp)}
B _{st_i}B_{t_i t_{i+1}}\varphi 
+\sigma _{t_i}\cdot \nabla\big(\sigma _{t_i}\cdot \nabla \varphi \big)\ZZ_{t_i t_{i+1}}\,,
\]
for $\varphi $ in $W^{2,2},$
where it can be easily checked that the former limit makes sense as an element of $W^{-3,2}.$

With this definition at hand, it is a simple exercise to check that:
(i) $\B\equiv(B,\BB)$ is an $1/\alpha $-unbounded rough driver on the scale $(W^{k,2})_{k\in \N};$
(ii) any weak solution of the equation ``$\d u =Au\d t +\d\B u$'' (in the sense of Definition \ref{def:weak_sol}), satisfies the integral equation \eqref{integral_equation}.

However, our existence and uniqueness results do not immediately apply, because counterparts of Propositions \ref{pro:renormalizable} and \ref{pro:drift} are missing in this context.
Due to the size of the present paper, and because it would make the presentation more cumbersome,
we refrain from giving their proof.
\end{remark}

\begin{remark}[Geometricity and the stochastic parabolicity assumption]
\label{rem:sto_par_1}
Equations of the form \eqref{rough_PDE} are well studied in the case where the driving path is a Brownian motion,
and they are known to be solvable in the It\^o sense, only under the so-called stochastic parabolicity condition.
It is interesting to understand why our results do not apply in a non-geometric context, unless one makes some similar assumption.

For simplicity let us consider the case where $K=d=1,$ $\nu =0=b=c,$ $a>0$ and $\sigma\in\R,$ then the same formal computations as before lead to the following first order approximation:
\begin{equation}
 \label{solution_theory}
u_t-u_s=\int_s^ta\partial _{xx}u_r\d r +\sigma \partial _xu_sZ_{st} +\sigma ^2\partial _{xx}u_s\ZZ_{st} + u_{st}^{\natural},
\end{equation} 
where as before we expect the ``error term'' $u^\natural$ to be at most of size $o(t-s),$ because $\alpha>1/3.$
It turns out that this intuition is wrong in general.

Recall that, at an informal level, $\ZZ_{st}$ should be thought of as an ``offline interpretation for $\iint_{\Delta _{st}}\d Z\d Z$'', and therefore should be subject the algebraic conditions \eqref{chen_relations_gene}.
These are just the translation of the presumable additivity property ``$\int_s^\theta +\int_\theta ^t=\int_{s}^t$'', together with the linearity of the integral map $f\mapsto\int f\d Z.$
While it seems natural to postulate that $Z_{st}:=Z_t-Z_s$ for any reasonable definition of the term ``$\int_s^t\d Z$'',
there are in fact infinitely many possibilities for the second entry if one only imposes \eqref{chen_relations_gene}, in which we add the analytic conditions \eqref{analytic_conditions}.
There is however a priviledged choice consisting in letting
\begin{equation}
\label{geometric_enhancement}
\ZZ_{st}:=\ZZ^{\mathrm{geo}}_{st} \enskip \overset{\mathrm{def}}{=}\enskip  \frac12(Z_{st})^2,\quad \quad \text{for}\enskip (s,t)\in\Delta _I,
\end{equation}
in which case $\Z$ is easily seen to be geometric (see \eqref{geometricity}).

As it turns out, every enhancement fulfilling \eqref{chen_relations_gene} and \eqref{analytic_conditions} is given by 
\begin{equation}
\label{bracket}
\ZZ_{st}:=\ZZ^{\mathrm{geo}}_{st}- \frac12\delta [\Z]_{st},
\end{equation} 
where $[\Z]$ is called the \emph{bracket} of $\Z$ and denotes any element of $\V_ 1^{2\alpha }.$
(Note that \eqref{bracket} only defines the bracket up to the initial value $[\Z]_0,$ which will be taken equal to $0$ by convention.)

Next, applying the chain rule for rough paths (in the form given by \cite[Proposition 7.6]{friz2014course}), we formally obtain the following equation for $u^2:$
\begin{equation}
\label{ito_formula_intro}
u_t^2-u_s^2=\int_s^t2au_r\partial _{xx}u_r\d r + \sigma \partial _x(u_s^2)Z_{st} + \sigma ^2\partial _{xx}(u^2_{s})\ZZ_{st}
+\int_s^t\sigma ^2(\partial _xu_r)^2\d [\Z]_r +u^{2,\natural}_{st},
\end{equation}
where the equality should be understood for any $(s,t)\in\Delta _I,$ in some Sobolev space of negative order, say $(W^{3,\infty})^*.$

To be more explicit, let us consider the case where the driving path is an enhancement $\mathbf W\equiv (W,\mathbb W)$ of a Brownian motion over some probability space. Then, the choice \eqref{geometric_enhancement} is nothing but the Stratonovitch iterated integral,
and it can be shown that any solution in the sense \eqref{solution_theory} is indeed a solution in the usual Stratonovitch sense.  See \cite[Chap.\ 5]{friz2014course} for related results.
In this case, no particular assumption on the coefficients is necessary. If however one formulates the above equation in the sense of It\^o, then one has to choose the It\^o enhancement, that is $\mathbb W_{st}:=\frac12W_{st}^2 - (t-s)/2,$ or equivalently $[\mathbf W]_t:=t.$
Now, testing \eqref{ito_formula_intro} with the constant function $1,$ integrating by parts,
it is seen that in order to get energy dissipation, one has to make the so-called \emph{stochastic parabolicity} assumption:
\begin{equation}
\label{stochastic_parabolicity}
a-\frac12\sigma ^2>0.
\end{equation}
\end{remark}

\section{Analysis of rough partial differential equations}
\label{sec:analysis}

In this section, we introduce the basic tools necessary for the study of rough PDEs of the form \eqref{rough_PDE_gene}, namely, the rough Gronwall Lemma and an a priori estimate on the remainder in \eqref{nota:solution}. The results were originally introduced in \cite{bailleul2017unbounded, deya2016priori} where we also refer the reader for a more detailed introduction. The statements we present below are slightly different than in \cite{bailleul2017unbounded, deya2016priori} and hence for readers convenience we also include the proofs. These tools represent the core of our analysis and will be repeatedly used in order to obtain a priori estimates leading to existence as well as uniqueness of weak solutions.

\subsection{Rough Gronwall Lemma}
An important ingredient in order to obtain uniform estimates on weak solutions of \eqref{zakai} is the following generalized Gronwall-like estimate.
\begin{lemma}[Rough Gronwall]
\label{lem:gronwall}
Let $G:I\equiv[0,T]\to \R_+.$ Assume that we are given a regular control $\omega ,$ and a constant $L>0$
such that provided $\omega (s,t)\leq L,$
\begin{equation}\label{rel:gron}
\delta G_{st}\leq \left(\sup_{s\leq r\leq t} G_r\right)\omega (s,t)^{1/\kappa }+\varphi (s,t)\,,
\end{equation}
for some superadditive map $\varphi:\Delta _I\to\R,$ and a given constant $\kappa >0.$

Then, there exists a constant $\tau _{\kappa ,L}>0$ depending
on $\kappa $ and $L$ only such that
\begin{equation}
\label{concl:gron}
\sup_{0\leq t\leq T}G_t\leq 2\exp\left(\frac{\omega (0,T)}{\tau _{\kappa ,L}}\right)\left[G_0+\sup_{0\leq t\leq T}\left|\varphi (0,t)\right|\exp\left(\frac{-\omega (0,t)}{\tau _{\kappa ,L}}\right)\right].
\end{equation}
\end{lemma}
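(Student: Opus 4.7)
The natural strategy is a discrete Gronwall iteration after partitioning $[0,T]$ adaptively with respect to $\omega$. The key small-interval observation is that if $\omega(s,t)^{1/\kappa}\le 1/2$, then the hypothesis reads $\delta G_{st}\le \tfrac12\sup_{[s,t]}G+\varphi(s,t)$, and taking the supremum over $t$ on the left-hand side absorbs the $\tfrac12\sup G$ term, yielding the local linear estimate
\begin{equation*}
\sup_{s\le r\le t} G_r \;\le\; 2\,G_s + 2\,\varphi(s,t),\qquad \omega(s,t)\le \tau^\ast,
\end{equation*}
where $\tau^\ast\in(0,L]$ is chosen so that $(\tau^\ast)^{1/\kappa}\le 1/2$.

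Next, I would use continuity and superadditivity of $\omega$ to produce a partition $0=t_0<t_1<\dots<t_N=T$ with $\omega(t_n,t_{n+1})\le \tau^\ast$ and $N\le 1+\omega(0,T)/\tau^\ast$. Iterating the one-step bound along this partition gives the recursion $G_{t_{n+1}}\le 2G_{t_n}+2\varphi(t_n,t_{n+1})$, hence by unfolding,
\begin{equation*}
G_{t_n}\;\le\;2^n G_0+\sum_{k=0}^{n-1}2^{n-k}\varphi(t_k,t_{k+1}).
\end{equation*}
Superadditivity gives $\varphi(t_k,t_{k+1})\le \varphi(0,t_{k+1})-\varphi(0,t_k)\le |\varphi(0,t_{k+1})|+|\varphi(0,t_k)|$, so the second term is controlled by a weighted sum of the quantities $|\varphi(0,t_k)|$.

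The last step is the exponentialization. I would set $\tau_{\kappa,L}:=\tau^\ast/(C\log 2)$ for a sufficiently large constant $C$ so that $e^{\tau^\ast/\tau_{\kappa,L}}$ exceeds $2$ with enough margin. Writing $M:=\sup_{t}|\varphi(0,t)|e^{-\omega(0,t)/\tau_{\kappa,L}}$, one has $|\varphi(0,t_k)|\le M\,e^{\omega(0,t_k)/\tau_{\kappa,L}}$; combined with the monotonicity of $\omega(0,\cdot)$ (a consequence of superadditivity) and a geometric sum, the weighted sum collapses to $\lesssim M\,e^{\omega(0,t_n)/\tau_{\kappa,L}}$, and similarly $2^n\le e^{\omega(0,t_n)/\tau_{\kappa,L}}$. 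Combining these yields the bound \eqref{concl:gron} at the partition points; passing to an arbitrary $t\in(t_n,t_{n+1}]$ then follows from one more application of the local estimate, together with monotonicity of $\omega(0,\cdot)$.

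The main obstacle is precisely this balancing: the iteration contributes a multiplicative factor $2^N$ with $N\propto \omega(0,T)/\tau^\ast$, and the $\varphi$-contributions are amplified by exponentially growing weights $2^{n-k}$, so the calibration of $\tau_{\kappa,L}$ relative to $\tau^\ast$ must be made carefully enough that both contributions fit simultaneously under the single exponential envelope $e^{\omega(0,t)/\tau_{\kappa,L}}$ appearing in the conclusion, while preserving the leading constant $2$.
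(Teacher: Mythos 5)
Your scheme (greedy partition with $\omega(t_k,t_{k+1})\le\tau^\ast$, local absorption, iteration, exponential reweighting) belongs to the same circle of ideas as the paper's proof, but as written it has two genuine gaps. First, the local estimate $\sup_{s\le r\le t}G_r\le 2G_s+2\varphi(s,t)$ is not valid for the $\varphi$ allowed by the lemma: taking the supremum over $t'\in[s,t]$ leaves $\sup_{t'\in[s,t]}\varphi(s,t')$ on the right-hand side, and this is dominated by $\varphi(s,t)$ only when $\varphi(s,\cdot)$ is nondecreasing, i.e.\ essentially when $\varphi\ge0$; the lemma explicitly allows signed $\varphi$ (see the remark following it, motivated by martingale increments). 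This is repairable, e.g.\ via $\sup_{t'}\varphi(s,t')\le\sup_{t'}|\varphi(0,t')|+|\varphi(0,s)|$, but it has to be incorporated and it worsens your constants.

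Second, and more seriously, the claimed preservation of the leading constant $2$ does not follow from your calibration. With the absorption threshold $\omega(s,t)^{1/\kappa}\le 1/2$ every step costs a factor $2$, so the increments $\varphi(t_k,t_{k+1})$ closest to the evaluation time enter your bound with weight at least $2$ before any exponential gain: if $t$ lies in the last interval and $\omega(t_{N-1},T)$ is small, then $|\varphi(0,t_{N-1})|$ and $\sup_r|\varphi(0,r)|$ are both comparable to $M\,e^{\omega(0,T)/\tau_{\kappa,L}}$ with no suppression, and unfolding your recursion plus the final local step gives a coefficient no better than about $6$ (rather than $2$) in front of $\sup_t|\varphi(0,t)|e^{-\omega(0,t)/\tau_{\kappa,L}}$, no matter how large you take $C$ in $\tau_{\kappa,L}=\tau^\ast/(C\log 2)$; nor can a worse constant be traded away afterwards by shrinking $\tau_{\kappa,L}$, since the weight inside $M$ changes together with the prefactor. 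To reach the stated constant you must make the per-step absorption factor $1+o(1)$, i.e.\ use a much smaller smallness threshold than $1/2$ and absorb only once. That is exactly what the paper does: with $\tau:=L\wedge(2e^2)^{-\kappa}$ it telescopes $G$ over the partition defined by $\omega(0,t_k)=k\tau$, bounds the weighted maximal function $H_t:=\bigl(\sup_{r\le t}G_r\bigr)e^{-\omega(0,t)/\tau}$ by $G_0+\sup_t\{|\varphi(0,t)|e^{-\omega(0,t)/\tau}\}+\tau^{1/\kappa}e^2H_{\le T}$, and performs a single absorption using $\tau^{1/\kappa}e^2\le 1/2$, which is precisely where the factor $2$ in the conclusion comes from. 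Your argument, once repaired for signed $\varphi$, does prove the inequality with some larger $\kappa,L$-dependent constant, but not the statement as written.
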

\begin{remark}
A proof under slightly different hypotheses can be found in \cite{deya2016priori}.
Note that here we allow for $\varphi $ which has no sign. This may be relevant in the context of stochasic PDEs, where typically relations such as \eqref{rel:gron} may involve $\varphi (s,t):=M_t-M_s,$ the increments of a martingale $M.$
\end{remark}
\begin{proof}
Let $\tau :=L\wedge (2e^{2})^{-\kappa },$
Since the control $\omega$ is regular,
there exists an integer $K\geq 2$ and a sequence $t_0\equiv0<t_1<\dots<t_{K-1}<t_{K}\equiv T$ such that for each 
$k$ in $\{1,\dots,K-1\},$
\begin{equation}
\label{tk}
\omega (0,t_k)=k\tau\,,
\end{equation}
while for $k=K$ it holds $\omega (0,t_K)\equiv\omega (0,T)\leq K\tau .$
For $k\in\{0,\dots,K-1\},$ using superadditivity we obtain the property:
\begin{equation}
\label{tk_tk1}
\omega (t_k,t_{k+1})\leq\tau\,.
\end{equation}

Next, for $t\in[0,T],$ we let:
\[
G_{\leq t}:=\sup_{0\leq r\leq t}G_r\,,\quad 
H_t:=G_{\leq t}\exp\left(-\frac{\omega (0,t)}{\tau }\right)\,,\quad
H_{\leq t}:=\sup_{0\leq r\leq t}H_r\,.
\]
Fix $t\in[t_{k-1},t_k]$ for some $k\in\{1,\dots,K\}$. Note that since $\tau \leq L$, we may apply the estimate \eqref{rel:gron} on each subinterval $[t_{i},t_{i+1}]$. Hence, using \eqref{rel:gron}, \eqref{tk_tk1} and the superadditivity of $\varphi ,$ it holds
\begin{align*}
G_t&=G_0+\sum\nolimits_{i=0}^{k-2}\delta G_{t_i t_{i+1}} +\delta G_{t_{k-1}t}
\\
&\leq G_0+\tau ^{1/\kappa }\Big(\sum\nolimits_{i=0}^{k-2}G_{\leq t_{i+1}}
+G_{\leq t}\Big)+\sum\nolimits_{i=0}^{k-2}\varphi (t_i,t_{i+1})+\varphi (t_{k-1},t)
\\
&\leq G_0+\tau ^{1/\kappa }\sum\nolimits_{i=0}^{k-1}H_{t_{i+1}}\exp\Big(\frac{\omega (0,t_{i+1})}{\tau }\Big) +\varphi (0,t)
\intertext{which, according to \eqref{tk} and the properties of the exponential map, is bounded above by}
& \quad G_0+\tau ^{1/\kappa }H_{\leq T}\exp(k+1) +\varphi (0,t)\,.
\end{align*}
By the fact that $\omega (0,t)\geq \omega (0,t_{k-1}),$
we deduce the following estimate on $H:$
\[
\begin{aligned}
H_t\leq \left\{G_0 + |\varphi (0,t)| +\tau ^{1/\kappa }\exp(k+1)H_{\leq t}\right\}\exp\left(\frac{-\omega (0,t)}{\tau }\right)
\\
\leq G_0+\sup_{t\leq T}\left\{|\varphi (0,t)|\exp\left(\frac{-\omega (0,t)}{\tau }\right)\right\} + \tau ^{1/\kappa }e^2 H_{\leq T}\,,
\end{aligned}
\]
According to our definition of $\tau ,$ this yields the bound:
\[
H_{\leq T}\leq \frac{1}{1-e^2\tau ^{1/\kappa }}\left(G_0+\sup_{t\leq T}\left\{|\varphi (0,t)|\exp\left(\frac{-\omega (0,t)}{\tau }\right)\right\}\right)\,,
\]
from which \eqref{concl:gron} follows.
\end{proof}

\subsection{Remainder estimates}

As in the classical theory, the rough Gronwall Lemma presented above is a simple tool that, among others, permits to obtain a priori estimates for rough PDEs of the general form \eqref{rough_PDE_gene}. It should be stressed however that the most delicate part of this argument is to estimate  the remainder in such a way that Lemma \ref{lem:gronwall} is indeed applicable. This step is by no means trivial, in particular, due to unboundedness of the involved operators (in the noise terms as well as in the deterministic part of the equation) and the corresponding loss of derivatives. The key observation is that there is a tradeoff between space and time regularity which can be balanced using a suitable interpolation technique. To this end, let us introduce the notion of smoothing operators on a given scale $(\mathcal{G}_k)$.

\begin{definition}
Assume that we are given a scale $(\G_k)_{k\in\N}$ with a topological embedding
$$\cup_{k\in\N}\G_k\hookrightarrow\mathscr D',$$
and let $J_\eta:\mathscr D'\to \mathscr D',\eta \in(0,1),$ be a family of linear maps.
For $m\geq 1$
we say that $(J_\eta )_{\eta \in(0,1)}$ is an \emph{$m$-step family of smoothing operators on $(\G_k)$}  provided
for each $k\in\N:$
\begin{enumerate}[label=(J\arabic*)]
\item \label{J1}
$J_\eta$ maps $\G_k$ onto $\G_{k+m} ,$
for every $\eta \in(0,1),$
\end{enumerate}
and there exists a constant $C_J>0$ such that
for any $\ell\in\N$ with $|k-\ell|\leq m:$ 
\begin{enumerate}[label=(J\arabic*)]
\setcounter{enumi}{1} 
\item \label{J2}
if $0\leq k\leq \ell\leq m+1 ,$ then
\begin{equation}
\label{estimates_J}
|J_\eta |_{\L(\G_k, \G_\ell )}\leq \frac{C_J}{\eta ^{\ell-k} }\,,\quad \text{for all}\quad \eta \in(0,1)\,;
\end{equation} 
\item \label{J3}
if $0\leq \ell \leq k\leq m+1,$ then
\begin{equation}
\label{estimates_J2}
|\id-J_\eta |_{\L(\G_{k}, \G_{\ell })}\leq C_J\eta^{k-\ell } \,,
\quad \text{for all}\quad \eta \in(0,1)\,.
\end{equation}
\end{enumerate}
\end{definition}

\begin{remark}
Whenever the spaces $\G_k$ are Sobolev-like with exponents of integrability different from $1,\infty$, examples of $1$-step families of smoothing operators
are provided by
\begin{equation}\label{families_sobolev}
J_\eta :=(\id-\eta^2 \Delta )^{-1}\,\quad\text{or} \quad J_\eta :=e^{\eta^2 \Delta }
\end{equation}
(under suitable assumptions on the domain of $\Delta $). In $W^{k,2}(\R^d)$ this is easily seen using the Fourier transform: for instance, concerning the first family we can use the inequality
\[
\frac{1}{1+(\eta|\xi |)^2}-1\leq C_\alpha (\eta |\xi |)^{2\alpha }\,,
\]
which holds for every $\alpha \in[0,1],$
and then apply Parseval Identity (the cases $\alpha=\frac12,1 $ yield \ref{J3}). Note that smoothing operators similar to the second family above are also extensively used in \cite{otto2016quasilinear}.

If $\G_k$ consists of functions $\phi $ supported on the whole space $\R^d,$ one can simply let $J_\eta \phi :=\varrho_\eta *\phi ,$ where $\varrho_\eta $ is a well-chosen approximation of the identity.
The existence of such smoothing families when elements of $\G_k$ are compactly supported is not trivial
and is therefore treated in Appendix \ref{ss:smoothing}.
\end{remark}

Let us now formulate the main result of this section.

\begin{proposition}[Estimate of the remainder]\label{pro:apriori}
Let $\alpha \in(1/3,1/2]$ and fix an interval $I\subset [0,T].$ Let $\B=(B,\BB)$ be a continuous unbounded $1/\alpha $-rough driver on a given scale $\G_k,\nnnn{\cdot }{k},k\in \N,$ endowed with a two-step family of smoothing operators $(J_\eta )_{\eta \in(0,1)}.$
Consider a drift $\mu \in \V^1_1(I;\G_{-1})$ and let $\omega _\mu $ be a regular control such that
\begin{equation}\label{hyp:apriori}
\nnnn{\delta \mu _{st}}{-1}
\leq \omega _\mu (s,t),\quad \text{for every}\enskip (s,t)\in\Delta _I.
\end{equation}
Let $g$ be a weak solution of
\eqref{rough_PDE_gene} in the sense of Definition \ref{def:weak_sol},
such that $g$ is controlled over the whole interval $I$, that is:
$g^\natural\in \V^{1+}_2(I;\G_{-3}).$

Then, there exist constants $C,L>0,$ such that, if the interval $I$ satisfies the smallness condition
 $\omega _B(I)\leq L,$ then it holds for each $(s,t)\in\Delta_I :$
\begin{equation}
\label{estimate_remainder}
\nnnn{g^\natural_{st}}{-3}\leq C\left(\sup_{s\leq r\leq t}\nnnn{g_r}{-0}\omega _B(s,t)^{3\alpha }+\omega _\mu (s,t)\omega _B(s,t)^{\alpha }\right)\,.
\end{equation}

Furthermore, 
define for each $(s,t)\in\Delta _I $ the first order remainder
\begin{equation}\label{nota:sharp}
g^{\sharp}_{st}:=\delta g_{st} -B_{st}g_s.
\end{equation}
Then, under the smallness condition $(\omega _\mu +\omega _B)(I)\leq L,$ it holds true that for every $(s,t)\in\Delta _I:$
\begin{align}
\label{bounds:gubinelli}
\nnnn{g^\sharp_{st}}{-1}
&\leq C\left(\omega _\mu(s,t)+ \sup_{s\leq r\leq t}\nnnn{g}{-0}\big(\omega _\mu(s,t)^\alpha +\omega _B(s,t)^\alpha\big) \right)
\,,
\\
\label{bounds:gubinelli3}
\nnnn{g^{\sharp}_{st}}{-2}
&\leq C\left(\omega _\mu(s,t) +\sup_{s\leq r\leq t}\nnnn{g}{-0}\omega _B(s,t)^{2\alpha }\right)\,,
\\
\label{bounds:gubinelli2}
\nnnn{\delta g_{st}}{-1}
&\leq C\left(\omega _\mu(s,t)+ \sup_{s\leq r\leq t}\nnnn{g}{-0}\big(\omega _\mu(s,t)^\alpha +\omega _B(s,t)^\alpha\big) \right)\,.
\end{align}
\end{proposition}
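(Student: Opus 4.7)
The plan is to follow the sewing-based strategy of \cite{bailleul2017unbounded,deya2016priori}. The foundation is an algebraic identity for $\delta g^\natural$ that I derive first: writing \eqref{nota:solution} on each of $(s,\theta),(\theta,t),(s,t)$ and invoking Chen's relations \ref{RD2} together with $\delta(\delta\mu)_{s\theta t}=0$, a short calculation (using $B_{st}=B_{s\theta}+B_{\theta t}$ and $\BB_{st}=\BB_{s\theta}+\BB_{\theta t}+B_{\theta t}B_{s\theta}$) yields
\begin{equation*}
\delta g^\natural_{s\theta t} = B_{\theta t}\,g^\sharp_{s\theta} + \BB_{\theta t}\,\delta g_{s\theta},
\end{equation*}
expressing the failure of additivity of $g^\natural$ in terms of the lower-order objects $g^\sharp$ and $\delta g$. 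Both admit two useful representations: from the equation, $g^\sharp_{st}=\delta\mu_{st}+\BB_{st}g_s+g^\natural_{st}$, and from its definition, $g^\sharp_{st}=\delta g_{st}-B_{st}g_s$. Combined with \ref{RD1} and \eqref{hyp:apriori}, these give a first crude control on the right-hand side of the Chen identity.

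Next I would use the two-step smoothing family $(J_\eta)$ to upgrade this preliminary estimate to sewing quality. The naive Chen bound yields only an $\omega_B^\alpha$-scaling on $|\delta g^\natural|_{-3}$, which is too weak ($3\alpha>1$, but $\alpha<1$). Decomposing any test function $\phi\in\G_3$ as $\phi=J_\eta\phi+(\id-J_\eta)\phi$ and exploiting \ref{J2}--\ref{J3} to trade regularity for powers of $\eta$, I optimise over $\eta\in(0,1)$ to obtain, schematically,
\begin{equation*}
\nnnn{\delta g^\natural_{s\theta t}}{-3} \lesssim \omega_B(s,t)^{3\alpha}\sup_{r\in[s,t]}\nnnn{g_r}{-0} + \omega_\mu(s,t)\,\omega_B(s,t)^\alpha + (\text{self-referential residue}).
\end{equation*}
With this $(3\alpha)$-estimate and the hypothesis $g^\natural\in \V^{1+}_2(I;\G_{-3})$, a sewing-type argument applies: iterating the Chen identity along partitions of $[s,t]$, the sum $\sum_{(\pi)}g^\natural_{t_it_{i+1}}$ vanishes as $|\pi|\to0$ by the $\V^{1+}$ assumption, while the cross-term sum converges thanks to superadditivity of $\omega_B$ and $\omega_\mu$, producing \eqref{estimate_remainder}. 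The bounds \eqref{bounds:gubinelli}--\eqref{bounds:gubinelli2} then follow by substituting the estimate on $g^\natural$ back into $g^\sharp=\delta\mu+\BB g+g^\natural$ and $\delta g=g^\sharp+Bg$, with an additional use of $(J_\eta)$ to transfer bounds between the scales $\G_{-1},\G_{-2},\G_{-3}$.

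The main obstacle is the self-referential character of the estimate: the Chen identity controls $\delta g^\natural$ in terms of $g^\sharp$, which brings $g^\natural$ back via the equation, and no embedding gives $|\cdot|_{-2}$-norms from $|\cdot|_{-3}$-norms (the inclusions run the wrong way). The smoothing operators are precisely what resolve this, allowing one to pay a price $\eta^{-k}$ in order to gain spatial regularity, so that the self-referential contribution appears multiplied by a small power of $\omega_B$ that can be absorbed on the left provided $\omega_B(I)\leq L$. The smallness hypothesis is thus used exactly to close the bootstrap, and the interplay between Chen's algebra, smoothing-based interpolation, and the sewing philosophy is the technical heart of the argument.
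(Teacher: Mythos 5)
Your plan reproduces the paper's own argument: the same Chen-relation identity $\delta g^\natural_{s\theta t}=B_{\theta t}\,g^\sharp_{s\theta}+\BB_{\theta t}\,\delta g_{s\theta}$, the same use of the two representations of $g^\sharp$, the same $J_\eta$-interpolation optimized in $\eta$ (with the smallness of $\omega_B(I)$ guaranteeing an admissible choice of $\eta$), the Sewing Lemma to pass from $\delta g^\natural$ back to $g^\natural$, absorption of the self-referential term, and back-substitution (with one further smoothing step) for \eqref{bounds:gubinelli}--\eqref{bounds:gubinelli2}. The only point to make explicit in a full write-up is that the absorption is carried out on the minimal control dominating $\nnnn{g^\natural}{-3}$ (its $1$-variation, finite precisely because $g^\natural\in\V^{1+}_2(I;\G_{-3})$ on all of $I$ and itself a control by Lemma \ref{lem:control}), which is what makes ``absorbing on the left'' legitimate after the sewing step.
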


Before we proceed to the proof of Proposition \ref{pro:apriori}, we need to establish some properties related to controls and $1/\alpha $-variation spaces.
Working with $\V^\alpha $ rather than $C^\alpha $ is necessary here, in order to deal with the low-regularity assumptions \eqref{uniform_ellipticity}-\eqref{assumption:b_c}.
The $1/\alpha $-variation setting also turns out to be very convenient because of the fact that control functions enjoy some ``nice properties''. For instance, it is easily seen that a product
\begin{equation}
\label{product_controls}
\omega _1(s,t)^a\omega _2(s,t)^b
\end{equation} 
where $a+b\geq 1,$ and $\omega _1,\omega _2$ are controls, is also a control, and it is regular if both are regular. See \cite{friz2010multidimensional}.
Another interesting property of controls is as follows.
In Proposition \ref{pro:apriori}, we are interested in taking the ``sharpest control'' majorizing $(s,t)\mapsto|g^{\natural}_{st}|_{\G_{-3}}.$ Note that a supremum of controls is not a control in general, however if for any $(s,t)\in \Delta _I$ one defines
\begin{equation}
\label{nota:w_natural}
\omega _\natural(s,t):=\inf\{\omega (s,t):\omega \in\mathfrak C_{s,t}\}
\end{equation}
\begin{equation}
\label{nota:}
\mathfrak C_{s,t}:=\left\{\omega :\Delta _{[s,t]}\to\R_+\,,\text{control}\enskip |\enskip 
\forall(\theta,\tau)\in\Delta _{[s,t]},\enskip 
\omega (\theta,\tau)\geq \nnnn{g^\natural_{\theta \tau }}{-3}\enskip 
\right\}\,,
\end{equation}
then the following holds.

\begin{lemma}
\label{lem:control}
The mapping $\omega _\natural:\Delta _I\to \R_+$ defined in \eqref{nota:w_natural} is a regular control.
Moreover, it is equal to $(s,t)\in\Delta _I\mapsto |g^\natural|_{1\mathrm{-var},\G_{-3};[s,t]}.$
\end{lemma}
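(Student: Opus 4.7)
The plan is to identify $\omega_\natural$ with the $1$-variation functional $V(s,t) := |g^\natural|_{1/1\text{-var}, \G_{-3}; [s,t]}$ from \eqref{nota:p-var}, and then to verify that $V$ is indeed a regular control. For the identification $\omega_\natural = V$ I would argue two inequalities. For $\omega_\natural \leq V$, I check that $V$ itself belongs to $\mathfrak C_{s,t}$: the majorization $V(\theta,\tau) \geq \nnnn{g^\natural_{\theta \tau}}{-3}$ is immediate from the trivial partition $\{\theta, \tau\}$, while the superadditivity $V(s, \theta) + V(\theta, t) \leq V(s, t)$ follows from the standard observation that the concatenation of any $\pp_1 \in \PP([s, \theta])$ and $\pp_2 \in \PP([\theta, t])$ is a partition of $[s, t]$, so that taking suprema over $\pp_1$ and $\pp_2$ on the left-hand side yields the desired inequality. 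For the converse $V \leq \omega_\natural$, given any $\omega \in \mathfrak C_{s,t}$ and any $\pp = (t_i) \in \PP([s,t])$, the superadditivity of $\omega$ gives
\[
\sum\nolimits_{(\pp)} \nnnn{g^\natural_{t_i t_{i+1}}}{-3} \leq \sum\nolimits_{(\pp)} \omega(t_i, t_{i+1}) \leq \omega(s,t),
\]
and passing to the supremum in $\pp$ and then to the infimum in $\omega$ concludes.

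Once $\omega_\natural = V$ is established, superadditivity of $\omega_\natural$ is already proven, so only continuity on $\Delta_I$ remains. The key input is the hypothesis $g^\natural \in \V^{1+}_2(I; \G_{-3})$, which provides an exponent $\alpha > 1$ and a regular control $\bar\omega$ with $\nnnn{g^\natural_{st}}{-3} \leq \bar\omega(s,t)^\alpha$. Summing over a partition and applying the elementary inequality $\sum_i x_i^\alpha \leq (\sum_i x_i)^\alpha$ (valid for nonnegative reals and $\alpha \geq 1$) together with the superadditivity of $\bar\omega$ at once yields
\[
V(s,t) \leq \bar\omega(s,t)^\alpha,
\]
which in particular shows that $V(t,t) = 0$ and that $V$ is continuous along the diagonal.

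To promote this into continuity at a generic $(s_0, t_0) \in \Delta_I$, I would combine the monotonicity $V(s', t') \leq V(s, t)$ for $[s', t'] \subset [s, t]$ (immediate by adjoining $s$ and $t$ to any partition of $[s', t']$) with a squeezing argument. The slack introduced by inserting or removing a point $u$ in a subinterval $(t_i, t_{i+1})$ of a partition is bounded by $\nnnn{\delta g^\natural_{t_i u t_{i+1}}}{-3}$, and from the defining relation \eqref{nota:solution} and Chen's relations \eqref{chen} one computes the cocycle identity
\[
\delta g^\natural_{s\theta t} = \delta g_{s\theta}\, B_{\theta t} + \delta g_{s\theta}\, \BB_{\theta t} - g_s\, B_{\theta t} B_{s\theta},
\]
whose $\G_{-3}$-norm is controlled by \eqref{bounds:rough_drivers} and is $o(1)$ as the triple $(s, \theta, t)$ shrinks. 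Inserting the points $s_0, t_0$ into an almost-optimal partition of $[s_0 - h, t_0 + h]$, bounding the resulting sum by $V(s_0 - h, s_0) + V(s_0, t_0) + V(t_0, t_0 + h)$ plus slack, and sending $h \to 0$ yields the desired continuity.

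The hard part is precisely this last step. For one-variable paths the $1$-variation is additive, so inserting a point in a partition can only increase the sum, and continuity of $V$ comes essentially for free; for a $2$-index map only superadditivity holds, and the missing additivity must be compensated by the cocycle bound on $\delta g^\natural$. The resulting estimate is available only because $g^\natural$ lives in $\V^{1+}_2$, i.e., because the remainder enjoys a strictly better than $1$ regularity exponent which allows the correction terms to be absorbed into the diagonal bound.
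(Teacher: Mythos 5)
Your identification $\omega_\natural=|g^\natural|_{1\mathrm{-var},\G_{-3};[\cdot,\cdot]}$ and the superadditivity are argued exactly as in the paper: the $1$-variation functional majorizes $\nnnn{g^\natural_{\theta\tau}}{-3}$ (trivial partition) and is superadditive (concatenation of partitions), hence belongs to $\mathfrak C_{s,t}$, while the partition-sum estimate gives the reverse inequality $|g^\natural|_{1\mathrm{-var};[s,t]}\leq\omega_\natural(s,t)$. Where you genuinely depart from the paper is the \emph{regularity}: the paper disposes of continuity of the variation function by citing \cite[Proposition 5.8]{friz2010multidimensional}, whereas you attempt a self-contained squeezing argument. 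The attempt is not pointless — continuity really does require more than $g^\natural\in\V^{1+}_2(I;\G_{-3})$: for instance $g_{st}:=(t-s)^2\,\ind_{\{s\leq 1/2\leq t\}}$ lies in $\V^2_2([0,1];\R)$ with the regular control $\omega(s,t)=t-s$, yet its $1$-variation over $[0.4,t]$ jumps at $t=1/2$ — so some structural input beyond the $\V^{1+}_2$ membership (your closing sentence attributes too much to it) must enter.

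The soft spot is precisely the justification of the slack. When you insert $s_0$ into a near-optimal partition of $[s_0-h,t_0+h]$, the incurred error is $\nnnn{\delta g^{\natural}_{t_i s_0 t_{i+1}}}{-3}$ with $t_i\in[s_0-h,s_0)$ but $t_{i+1}$ possibly far away, up to $t_0+h$: the triple does \emph{not} shrink, so "$o(1)$ as the triple $(s,\theta,t)$ shrinks" does not apply, and the crude bound coming from \eqref{bounds:rough_drivers} alone is of order $\omega_B(t_i,t_{i+1})^{\alpha}\sim 1$. The repair is the one-sided structure of Chen's relation: by \eqref{leading_to}, $\delta g^{\natural}_{t_i s_0 t_{i+1}}=B_{s_0 t_{i+1}}g^{\sharp}_{t_i s_0}+\BB_{s_0 t_{i+1}}\delta g_{t_i s_0}$, and each term carries a factor supported on the shrinking side — $\delta g_{t_i s_0}$ and $B_{t_i s_0}g_{t_i}$ for insertions near $s_0$, respectively the operator norms of $B_{t_0 t_{j+1}},\BB_{t_0 t_{j+1}}$ for insertions near $t_0$. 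Making these small uses the continuity of $g$ as a $\G_{-0}$-valued path (Definition \ref{def:weak_sol}), the embeddings $\G_{-0}\hookrightarrow\G_{-1}\hookrightarrow\G_{-2}$, and the regularity of $\omega_B$; these inputs are not contained in \eqref{bounds:rough_drivers}. With this correction, and treating also the inner approximation $|g^\natural|_{1\mathrm{-var};[s_0+h,t_0-h]}\to|g^\natural|_{1\mathrm{-var};[s_0,t_0]}$ by the same insertion mechanism inside $[s_0,t_0]$ (monotonicity plus your outer squeeze only gives one side), your argument goes through; note finally that your displayed cocycle identity should be read as in \eqref{leading_to}, with $B_{\theta t}$, $\BB_{\theta t}$ acting on $\delta g_{s\theta}$ and $B_{s\theta}g_s$.
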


\begin{proof}[Proof of Lemma \ref{lem:control}.]
For $(s,\theta ,t)\in\Delta ^2 ,$ since both $\mathfrak C_{s,\theta },\mathfrak C_{\theta ,t}$ contain $\mathfrak C_{s,t},$
we have by definition:
\begin{equation}
\label{ineq:controls}
\omega _\natural(s,\theta )+\omega_\natural (\theta ,t)\leq \omega (s,\theta )+\omega (\theta ,t)\leq \omega (s,t)\,,
\end{equation}
for every $\omega \in\mathfrak C_{s,t}.$ Taking the infimum in \eqref{ineq:controls}, we see that \eqref{axiom:control} holds, so that $\omega _\natural$ is indeed a control.

Now, the mapping $\omega :(s,t)\in\Delta _I\mapsto|g^\natural|_{1\mathrm{-var},\G_{-3};[s,t]}$ is a regular control (see \cite[Proposition 5.8]{friz2010multidimensional}).
Therefore, letting $(s,t)\in\Delta _I,$ it only remains to prove that
\begin{equation}
\label{prop:omega_var}
\omega(s,t) \leq \omega _\natural(s,t),\quad \quad\enskip (s,t)\in\Delta _I.
\end{equation}
But for every partition $\pi\in\PP([s,t]),$ taking an arbitrary $\bar\omega $ in $\mathfrak{C}_{s,t},$ it holds:
\[
\sum\nolimits_{(\pi)}\nnnn{g^{\natural}_{t_it_{i+1}}}{-3}\leq \sum\nolimits_{(\pi)}\bar\omega (t_i,t_{i+1})\leq \bar\omega (s,t)\,,
\]
Taking sucessively the supremum over $\pi\in\PP([s,t])$ of the left hand side, and then the infimum over $\bar\omega \in\mathfrak C_{s,t},$ we see that \eqref{prop:omega_var} holds. This proves the lemma.
\end{proof}

As a consequence of \eqref{product_controls} and Lemma \ref{lem:control}, the conclusion \eqref{estimate_remainder} of Proposition \ref{pro:apriori} above could be changed to the following:
\begin{equation}
\label{conclusion_alternative}
\omega _{\natural}(s,t)\leq C\left(\sup_{s\leq r\leq t}\nnnn{g_r}{-0}\omega _B(s,t)^{3\alpha }+\omega _\mu (s,t)\omega _B(s,t)^{\alpha }\right),
\end{equation}
which will be the form proved below.

\begin{remark}
\label{rem:control}
In fact, the proof of Lemma \ref{lem:control} is easily modified to yield the following more general property.
Denote by $E$ any Banach space. For any $\alpha >0,$ if $g\in\V^\alpha _{2,\mathrm{loc}}(I;E)$, then
$(s,t)\in\Delta _I\mapsto|g|_{1/\alpha \mathrm{-var}}^{1/\alpha }$ is a regular control and moreover it holds for any $(s,t)\in\Delta _I:$
\[
|g|_{1/\alpha \mathrm{-var},E;[s,t]}^{1/\alpha }=\inf\{\omega (s,t):\omega \in \mathfrak C^\alpha _{s,t}\},
\]
where $\mathfrak C^\alpha _{s,t}:=\{\omega:\Delta _{[s,t]}\to\R_+ \enskip \text{control s.t.\ } \omega(\theta ,\tau ) ^\alpha \geq |g_{\theta \tau }|_{E}\enskip \forall(\theta ,\tau )\in\Delta _{[s,t]}\}.$
\end{remark}

Letting $A$ and $\B$ as in the above discussion, ...

We now have all in hand to prove Proposition \ref{pro:apriori}.
\begin{proof}[Proof of Proposition \ref{pro:apriori}]
\textit{Proof of \eqref{estimate_remainder}.}
To estimate the remainder $g^\natural_{st}$, we apply $\delta $ to \eqref{nota:solution} and use Chen's relations \eqref{chen}, leading to 
\begin{equation}\label{leading_to}
\begin{aligned}
\delta g^\natural_{s\theta t}
&=B_{\theta t}\delta g_{s\theta }-B_{\theta t}B_{s\theta }g_s+\BB_{\theta t}\delta g_{s\theta }
\\
&=B_{\theta t}g^{\sharp}_{s\theta }
+\BB_{\theta t}\delta g _{s\theta }
\\
&=:\T _\sharp+\T _\delta \,,
\end{aligned}
\end{equation}
for every $(s,\theta ,t)\in\Delta ^2_I.$
Note that  by definition of $g^\sharp$ in \eqref{nota:sharp} and the original equation \eqref{nota:solution}, it holds
\begin{equation}
\label{g_sharp_equal}
g^\sharp_{s\theta }\equiv\delta g_{s\theta } -B_{s\theta }g_s=\delta \mu _{s\theta }+\BB_{s\theta }g_s+g^{\natural }_{s\theta }
\end{equation}
hence it is both an element of $\G_{-1}$ and $\G_{-2},$ (with different time regularities).
This basic fact will be exploited in the sequel, in order to apply Proposition \ref{pro:sewing}.

In \eqref{leading_to}, test against $\phi \in \G_3$ such that $\nnnn{\phi }{3}\leq 1.$
Substituting \eqref{g_sharp_equal} into \eqref{leading_to} and then making use of $J_\eta $ for some $\eta \in(0,1)$ (to be fixed later on), there comes
\[\begin{aligned}
\langle\T_\sharp,\phi \rangle
&\equiv\langle \delta \mu _{s\theta }+\BB_{s\theta }g_s + g^\natural_{s\theta },B^*_{\theta t}J_\eta \phi \rangle
+
\langle \delta g_{s\theta }-B_{s\theta }g_s,B^*_{\theta t}(\id-J_\eta)\phi \rangle\,.
\end{aligned}
\]
Each term above can be estimated using the bounds on $\B$ as well as $\omega _\mu $ and the estimates \eqref{estimates_J}-\eqref{estimates_J2}.
Denoting for simplicity
\begin{equation}\label{nota:G}
G:=\sup_{r\in I}\nnnn{g_r}{-0}\,,
\end{equation}
we have for every $(s,\theta ,t)\in\Delta _I:$
\begin{equation}\label{going_back1}
\begin{aligned}
&|\langle\T_\sharp,\phi \rangle|
\leq
\omega _\mu(s,\theta )\nnnn{B_{\theta t}^*J_\eta \phi }{1}
+\langle g_s, \BB_{s\theta }^*B_{\theta t}^*J_\eta\phi \rangle 
+\langle g_{s\theta }^\natural, B_{\theta t}^*J_\eta \phi \rangle 
\\[0.5em]
&\quad \quad \quad \quad \quad \quad \quad \quad \quad \quad 
+\langle \delta g_{s\theta },B_{\theta t}^*(\id-J_\eta )\phi \rangle
+\langle  g_s,B_{s\theta }^*B_{\theta t}^*(\id-J_\eta )\phi \rangle
\\
&\leq C_J\Big(
\omega _\mu(s,t)\omega _B(s,t)^\alpha
+ G\omega _B(s,t)^{3\alpha}
+ \frac{\omega _\natural(s,t)\omega _B(s,t)^\alpha }{\eta } +2G \omega _B(s,t)^\alpha \eta ^2
+ G\omega _B(s,t)^{2\alpha }\eta 
\Big).
\end{aligned}
\end{equation}
This being true for any $\eta \in(0,1),$ we can make a choice that equilibrates the various terms. Namely, we let
\begin{equation}\label{eta_1}
\eta :=4C_J|\Lambda|\omega _B(s,t)^{\alpha }\,,\quad 
\end{equation}
where $|\Lambda|$ is the constant from the Sewing Lemma, see Proposition \ref{pro:sewing}.
Now, the smallness condition
\begin{equation}
\label{close_to_each_other}
\omega _B(I)< L:=\left(\frac{1}{4C_J|\Lambda|}\right)^{1/\alpha }
\end{equation}
guarantees that $\eta$ belongs to $(0,1),$ so that \eqref{eta_1} is indeed a valid choice.
In that case, we end up with the inequality
\begin{equation}\label{borne_delta2}
\nnnn{\T_\sharp}{-3}\leq C\Big(\omega _\mu(s,t)\omega _B(s,t)^\alpha +G\omega _B(s,t)^{3\alpha }\Big) + \frac{\omega _{\natural}(s,t)}{4|\Lambda|} 
\end{equation}
for some constant $C>0$ depending only on $|\Lambda|$ and $C_J,$ where $(s,\theta ,t)\in\Delta _I^2$ is arbitrary.
The previous computations also show that for $\phi \in \G_1$ with $\nnnn{\phi}{1}\leq 1:$
\begin{equation}\label{computations:sharp}
\begin{aligned}
|\langle g^\sharp_{s\theta },\phi \rangle|
&\leq \omega _\mu(s,\theta ) \nnnn{J_\eta \phi}{1}
+G
\omega _B(s,\theta )^{2\alpha }\nnnn{J_\eta \phi }{2}
+\omega _{\natural}(s,\theta )\nnnn{J_\eta \phi }{3}
\\[0.5em]
&\quad \quad \quad \quad \quad 
+\nnnn{\delta g_{s\theta }}{-0}\nnnn{(\id-J_\eta )\phi}{0}
+G\nnnn{B^*_{s\theta }(\id-J_\eta )\phi }{0}
\\
&\leq
C_J\Big(
\omega _\mu(s,\theta )
+G\frac{\omega _B(s,\theta )^{2\alpha }}{\eta}
+\frac{\omega _{\natural}(s,\theta )}{\eta ^2}
+2G
\eta 
+G
\omega _B(s,\theta )^{\alpha }
\Big)
\end{aligned}
\end{equation}
where we have used again \eqref{estimates_J}.
Choosing $\eta $ similarly as in \eqref{eta_1},
we see that
$g^\sharp$ belongs to $\V_ 2^\alpha (I;\G_{-1}),$
with an estimate:
\begin{equation}\label{bound_sharp}
\nnnn{g^\sharp_{s\theta }}{-1}\leq 
C\big(\omega _\mu(s,\theta )  +  G\omega _B(s,\theta )^\alpha \big)
+\frac{\omega _\natural(s,\theta )}{4|\Lambda|\omega _B(s,\theta )^{2\alpha }},\quad \text{for every}\enskip (s,\theta )\in \Delta _I.
\end{equation}
Now, for the second term in \eqref{leading_to} we can use \eqref{bound_sharp}:
taking $\phi \in \G_3$ with $\nnnn{\phi }{3}\leq 1,$ there comes
\begin{equation}\label{borne_delta1}
\begin{aligned}
|\langle \T_\delta,\phi \rangle|
&\equiv |\langle g^\sharp_{s\theta }+B_{s\theta }g_s, \BB_{\theta t}^*\phi  \rangle|
\\
&\leq
\nnnn{g_{s\theta }^{\sharp}}{-1}\nnnn{\BB_{\theta t}^*\phi }{1} + \nnnn{g_s}{-0}\nnnn{B_{s\theta }^*\BB_{\theta t}^*\phi }{0}
\\
&\leq
C\big(\omega _\mu(s,t)\omega _B(s,t)^{2\alpha}   + G\omega _B(s,t)^{3\alpha }\big)+\frac{\omega _\natural(s,t)}{4|\Lambda|}
+ G \omega _B(s,t)^{3\alpha }
\,.
\end{aligned}
\end{equation}
From the bounds \eqref{borne_delta1} and \eqref{borne_delta2}, we obtain
\[
\nnnn{\delta g^\natural_{s\theta t}}{-3}\leq C\left(
\omega _\mu (s,t)\omega _B(s,t)^\alpha  + G\omega _B(s,t)^{3\alpha }
\right)
+\frac{\omega _\natural(s,t)}{2|\Lambda |}\,,
\]
for some absolute constant $C>0,$ independently of $(s,\theta ,t)\in\Delta _I^2.$
We are now in position to apply the Sewing Lemma, Proposition~\ref{pro:sewing}, so that $g^\natural=\Lambda\delta g^\natural$ and moreover for all $(s,t)\in\Delta _{I},$ it holds
\[
\nnnn{g^\natural_{st}}{-3}\leq \omega _{\natural}'\equiv C\left(\omega _\mu (s,t)\omega _B(s,t)^\alpha + G\omega_B (s,t)^{3\alpha }\right) +\frac12\omega _{\natural}(s,t)\,.
\]
Recalling that $\omega _\natural$ is the smallest control $\omega _\natural'$ such that the inequality above holds (see Lemma \ref{lem:control}), we eventually obtain
\[
\nnnn{g^\natural_{st}}{-3}\leq 2C\left(\omega _\mu(s,t)\omega _B(s,t)^\alpha  +G\omega _B^{3\alpha }(s,t)\right)\,,
\]
which proves \eqref{estimate_remainder}.

\item[\indent\textit{Proof of \eqref{bounds:gubinelli}}.]
From \eqref{computations:sharp} and \eqref{estimate_remainder}, there holds (omitting time indexes):
\[
|\langle g^\sharp,\phi \rangle|
\leq
C\left(\omega _\mu
+G\left(\frac{\omega _B^{2\alpha }}{\eta}+\omega _B^\alpha +\eta 
\right)
+\frac{1}{\eta ^2}\left(\omega _\mu\omega _B^\alpha +G\omega _B^{3\alpha }\right)\right)\nnnn{\phi }{1}\,.
\]
Provided that $(\omega_\mu +\omega _B) (I)<L$ (hence guaranteeing that $\eta :=(\omega _\mu+\omega _B)^\alpha$ belongs to $(0,1)$) we end up with the a priori estimate
\[
\nnnn{g^\sharp_{st}}{-1}\leq 
C\left(\omega _\mu(s,t)+ G\big(\omega _\mu(s,t)^\alpha +\omega _B(s,t)^\alpha\big) \right)
\,,
\]
for $(s,t)\in\Delta _I$ (here we have used the trivial bounds $\omega _B\leq\omega _\mu+\omega _B ,$ $1-\alpha >\alpha ,$ as well as $(\omega _\mu+\omega _B)^\alpha \leq C_\alpha (\omega _\mu^\alpha +\omega _B^\alpha )$).

\item[\indent\textit{Proof of \eqref{bounds:gubinelli2}}]
Writing that $\delta g=g^{\sharp} + Bg,$ we see that the same bound holds for $\delta g$ instead of $g^\sharp$, namely
for every $(s,t)\in\Delta _I:$
\[
\nnnn{\delta g_{st}}{-1}\leq 
C\left(\omega _\mu(s,t)+ G\big(\omega _\mu(s,t)^\alpha +\omega _B(s,t)^\alpha\big) \right)
\]
(with another such universal constant $C$).

\item[\indent\textit{Proof of \eqref{bounds:gubinelli3}}]
Proceeding similarly, we have
\[
\langle g^{\sharp},\phi \rangle
\leq C\Big(\omega _\mu+G\left(\omega _B^{2\alpha }+\eta ^2+\omega _B^\alpha \eta \right)
+\frac{1}{\eta }\left(\omega _\mu\omega _B^\alpha +G\omega _B^{3\alpha }\right)\Big)\nnnn{\phi }{2}\,,
\]
where each term above is evaluated at $(s,t)\in\Delta _I.$
Whence, taking $\eta :=\omega _B(s,t)^\alpha ,$ we end up with the estimate
\[
\nnnn{g^{\sharp}_{st}}{-2}\leq C\left(\omega _\mu(s,t) +G\omega _B(s,t)^{2\alpha }\right)
\]
for every $(s,t)\in\Delta _I,$ for some universal constant $C>0.$
\end{proof}
\begin{remark}[On the link between weak solutions and the notion of controlled path]\label{rem:gubinelli}
Following  Gubinelli's approach on rough paths \cite{gubinelli2004controlling}, it would be natural in this setting to define the set 
$\mathscr D_B$ of \emph{controlled paths} as those couples $g,g'$ in $\V_ 1^{\alpha} (I;\G_{-1}),$ such that the first order remainder
\begin{equation}\label{def:controlled_path}
(s,t)\in\Delta \mapsto g^{\sharp}_{st}:=\delta g_{st}-B_{st}g'_s \,
\end{equation}
defines an element of $\V^{2\alpha}_{2,\rm{loc}}(I;\G_{-2})$ (meaning that a cancellation occurs in \eqref{def:controlled_path}).

If $g$ denotes a weak solution of \eqref{rough_PDE_gene}, in the sense of Definition \ref{def:weak_sol},
we have in fact $(g,g')\in\mathscr D_B$ with $g'=g.$
Therefore, given $(\G_k),(J_\eta),\mu ,$ and $\B$ as in Proposition \ref{pro:apriori}, we can alternatively define a weak solution
to \eqref{rough_PDE_gene} as an element $(g,g)$ of $\mathscr D_B$ such that \eqref{nota:solution} holds,
i.e.\ a continuous path
$g:[0,T]\to \G_{-0}$ such that
\[
\left\{
\begin{aligned}
&\delta g\in \V_ {2,\mathrm{loc}}^{\alpha}(I,\G_{-1})\,,
\\
&g^\sharp\equiv\delta g-Bg\in \V_ {2,\rm{loc}}^{2\alpha}(I,\G_{-2})\,,
\\
&g^\natural\equiv\delta g-Bg-\BB g -\delta \mu \in \V_ {2,\rm{loc}}^{3\alpha}(I,\G_{-3})\,.
\end{aligned}
\right.
\]
\end{remark}
\section{The energy inequality}
\label{sec:energy}
In this section we assume that the driving path $z$ is \emph{smooth} and we establish an estimate on the $\Bc$-norm of a weak solution to \eqref{zakai} which only depends on the rough path norm of the corresponding canonical lift $\Z$ of $z$. 
However it should be noted that the conclusion of Proposition \ref{pro:energy} below remains true provided the square $u^2$ satisfies the equation \eqref{ito_formula}, which will be shown to hold for any weak solution $u$, see Section \ref{sec:uniqueness}.

\subsection{The main statement}
\label{ss:main}

Using the standard theory for non-degenerate parabolic PDEs
(see  \cite[Chap.\ III]{ladyzhenskaya1968linear}), we know that
there exists a unique $u$ in the Banach space $\Bc$ (note that this space is denoted by $V_ 2^{1,0}$ in the latter reference),
solving the the evolution problem
\begin{equation}\label{approximate_pb}
\frac{\partial u}{\partial t} -A u=\left(\sigma^{ki} \partial _i u  +\nu^ku\right)\dot z^{k}
\,,\quad u_0\in L^2\,,
\end{equation}
in the sense that
\begin{multline}\label{solution:ladyzhenskaya}
-\iint_{I\times\O}u\partial _t\eta\d t\d x +\iint_{I\times\O }\left(a^{ij}\partial _ju\partial _i\eta -b^i\partial _iu\eta -cu\eta\right)\d t\d x
\\
=\iint_{I\times\O }\left(\sigma ^{ki}\partial _i\eta +\nu ^ku\eta \right)\dot z^k\d t\d x
\,,
\end{multline}
for every test function $\eta$ in the Sobolev space
\[
\mathscr W^{1,1}_2(I\times\O):=\{\eta \in L^2(I\times\O ):\nabla \eta ,\partial _t\eta \in L^2(I\times\O )\}\,,
\]
and such that $\eta$ vanishes, in the sense of traces at $t=T$ and $t=0.$

Our aim is to prove following.
\begin{proposition}[Energy inequality]\label{pro:energy}
Consider a \emph{smooth path} $z,$ together with its canonical geometric lift $\Z\equiv(Z,\ZZ),$
and let $\omega _Z$ be the control function $(s,t)\mapsto |Z|_{1/\alpha \mathrm{-var};[s,t]}^{1/\alpha }+|\ZZ|_{1/(2\alpha )\mathrm{-var};[s,t]}^{1/(2\alpha)}.$
Then, every weak solution of \eqref{zakai} satisfies
\begin{equation}\label{energy_inequality}
\sup_{0\leq t\leq T}|u_t|_{L^2}^2 + \int_0^T|\nabla u _r|^2_{L^2}\d r\leq C|u_0|_{L^2}^2\,,
\end{equation}
for a constant $C>0$ depending on the quantities $\omega _Z,|\sigma |_{W^{3,\infty}},|\nu|_{W^{2,\infty}},m,M,$ and $\|b\|_{2r,2q},$ $\|c\|_{r,q},$ but not on the individual element $u$ in $\Bc$.
\end{proposition}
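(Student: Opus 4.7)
The strategy is to test the It\^o formula \eqref{ito_formula} against the constant test function $\phi \equiv 1$, which belongs to $W^{3,\infty}(\R^d)$. Since all derivatives of $\phi$ vanish, $\langle u_t^2,1\rangle = |u_t|_{L^2}^2$, and the identity reduces to
\begin{equation*}
\delta(|u|_{L^2}^2)_{st} + 2\int_s^t\!\!\int_{\R^d} a^{ij}\partial_j u\,\partial_i u\,\d x\,\d r = R^{\rm drift}_{st} + R^{\rm rough}_{st} + \langle u^{2,\natural}_{st},1\rangle,
\end{equation*}
where $R^{\rm drift}_{st}$ collects the $b$- and $c$-contributions coming from $2\int_s^t\langle Au,u\rangle\d r$, while $R^{\rm rough}_{st}=\langle u_s^2,\hat B_{st}^*1\rangle+\langle u_s^2,\hat\BB_{st}^*1\rangle$. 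Assumption \ref{ass:a} bounds the left-hand side from below by $\delta(|u|^2)_{st}+2m\int_s^t|\nabla u|_{L^2}^2\d r$.

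Next, I would estimate each term on the right in a form compatible with Lemma \ref{lem:gronwall}. For $R^{\rm drift}_{st}$, H\"older's inequality together with the Sobolev interpolation \eqref{consequence_rq} yields
\begin{equation*}
|R^{\rm drift}_{st}|\leq \tfrac{m}{2}\int_s^t|\nabla u|_{L^2}^2\d r + \varphi_1(s,t)\sup_{r\in[s,t]}|u_r|_{L^2}^2,
\end{equation*}
where $\varphi_1$ is a superadditive function of $(s,t)$ depending on $\|b\|_{2r,2q;[s,t]}$ and $\|c\|_{r,q;[s,t]}$ that vanishes as $t-s\to 0$ thanks to absolute continuity of the $L^p$-integral. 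For $R^{\rm rough}_{st}$, the functions $\hat B_{st}^*1,\ \hat\BB_{st}^*1$ are bounded in $L^\infty$ by $C_{\sigma,\nu}\omega_Z(s,t)^\alpha$ and $C_{\sigma,\nu}\omega_Z(s,t)^{2\alpha}$ respectively, because only the non-differentiating terms in \eqref{nota:formal_adj} survive when $\phi\equiv 1$; hence $|R^{\rm rough}_{st}|\leq C_{\sigma,\nu}\bigl(\omega_Z(s,t)^\alpha+\omega_Z(s,t)^{2\alpha}\bigr)\sup_{r\in[s,t]}|u_r|_{L^2}^2$.

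The crucial step is to dominate the remainder through Proposition \ref{pro:apriori} applied to the equation for $u^2$ with respect to the scale $(W^{k,\infty}(\R^d))_{k\in\N}$, for which $\hat\B$ is a $1/\alpha$-unbounded rough driver (cf.~\eqref{B_rough_driver}) and two-step smoothing operators are at hand via $J_\eta:=e^{\eta^2\Delta}$ or convolution with an approximate identity. The drift $\mu_t:=2\int_0^t\langle Au_r,u_r\cdot\rangle\d r$ of the $u^2$-equation satisfies
\begin{equation*}
|\delta\mu_{st}|_{(W^{1,\infty})^*}\leq C\int_s^t\!\!\int_{\R^d}\!\bigl(|\nabla u|^2+(1+|b|)|\nabla u||u|+|c|u^2\bigr)\d x\,\d r =:\omega_\mu(s,t),
\end{equation*}
which is a control by H\"older and \eqref{consequence_rq}; then $|\langle u^{2,\natural}_{st},1\rangle|\leq |u^{2,\natural}_{st}|_{(W^{3,\infty})^*}\leq C\bigl(\sup_{r\in[s,t]}|u_r|_{L^2}^2\,\omega_Z(s,t)^{3\alpha}+\omega_\mu(s,t)\,\omega_Z(s,t)^\alpha\bigr)$ on sub-intervals where $\omega_Z$ is small. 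The first piece is harmless (exponent $3\alpha>1$), and the $\omega_\mu\omega_Z^\alpha$ piece, being proportional to $\int_s^t|\nabla u|^2\d r$ times $\omega_Z(s,t)^\alpha\ll 1$, can be absorbed into the parabolic dissipation.

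Putting all these bounds together yields an inequality of the form $\delta G_{st}\leq \omega(s,t)^{1/\kappa}\sup_{[s,t]}G_r+\varphi(s,t)$ for $G_t:=|u_t|_{L^2}^2$, with $\varphi$ superadditive and $\kappa:=1/\alpha$; the rough Gronwall Lemma \ref{lem:gronwall}, applied first on a sub-interval $[0,T_1]$ whose size is dictated by the smallness threshold and then iterated, produces \eqref{energy_inequality} with the stated dependence. The principal obstacle is the careful bookkeeping required to absorb \emph{every} occurrence of $\int|\nabla u|_{L^2}^2\d r$ arising in the remainder and drift estimates into the dissipative term on the left: this must be done using the smallness of $\omega_Z(s,t)^\alpha$ on short intervals, before any Gronwall-type closure, lest the argument become circular.
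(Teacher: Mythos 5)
Your proposal follows essentially the same route as the paper's proof: test the identity for $u^2$ against $\phi\equiv 1$, estimate the drift via H\"older and the interpolation bound \eqref{consequence_rq}, control the remainder through Proposition \ref{pro:apriori} on the scale $(W^{k,\infty})$ with the drift control of Lemma \ref{lem:drift_existence}, and close with the rough Gronwall Lemma \ref{lem:gronwall}. The only step you take for granted is that $u^2$ of a weak solution driven by the smooth path $z$ actually satisfies \eqref{ito_formula} — in the paper this is exactly Corollary \ref{cor:u_natural}, derived from the classical weak formulation \eqref{ladyzhenskaya_2} — and, as a minor bookkeeping difference, the paper puts the dissipation $\min(1,m)\int_0^t|\nabla u_r|^2_{L^2}\,\d r$ directly into $G_t$, which simplifies the absorption argument you describe.
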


Although $u$ does not belong to $\mathscr W_2^{1,1}$ a priori, by considering time averages of the form 
\[
u_h(t,x):=\frac1h\int_t^{t+h}u(t,\tau )\d \tau \,,
\]
(extended by zero if $t\notin[0,T-h]$) and passing to the limit $h\to0,$
it is seen that in \eqref{solution:ladyzhenskaya} we can formally test against 
\[
\eta (r,x):=\ind_{[s,t]}(r)\phi (x)u(r,x)
\]
with $\phi \in W^{1,\infty},$
(see the equality (2.13) in \cite[Chap.\ III.2]{ladyzhenskaya1968linear}
for the case where $\eta :=\ind_{[s,t]}u$, the proof being identical for $\eta $ as above).
This yields, for each $(s,t)$ in $\Delta ,$ and every $\phi $ in $W^{1,\infty}:$
\begin{multline}
\label{ladyzhenskaya_2}
\int_\O ((u_t)^2 -(u_s)^2)\phi\d x =2\iint_{[s,t]\times\O } \left(-a^{ij}\partial _ju\partial _i(u\phi )+b^i\partial _iu u\phi  + cu^2\phi \right)\d r\d x
\\
+\iint_{[s,t]\times\O } \left(\sigma ^{ki}\partial _i(u^2)\phi  +2\nu ^ku^2\phi \right)\dot z^k\d r\d x\,.
\end{multline}

\subsection{Proof of Proposition \ref{pro:energy}}
We are going to make use of the tools presented in Section \ref{sec:analysis}. More precisely, we will show that
\begin{itemize}
 \item suitable estimates relative to the scale $(W^{k,\infty})_{k\in \N}$
 hold for the drift part of \eqref{ladyzhenskaya_2}, i.e.\ for
\[
\enskip \int_0 ^{\cdot } uAu\d r\enskip
\]
understood as a linear functional on $W^{1,\infty}$;
\item equation \eqref{ladyzhenskaya_2} implies that $\d \,(u^2)=2\d \left(\int uA u\d r\right) +\d \hat\B (u^2)$ holds in the sense of Definition~\ref{def:weak_sol}.
\end{itemize}

\begin{remark}
\label{rem:controls_abc}
Taking $a,b,c$ such that Assumptions \ref{ass:a}-\ref{ass:b_c} hold true, and $u$ in $\Bc,$
the following quantities are regular controls
\begin{equation}
\label{controls_abc}
\left[
\begin{aligned}
&\aa(s,t):=(1+M^2)\left(\|\nabla u\|_{2,2;[s,t]}^2 + \|u\nabla u\|_{1,1;[s,t]}\right)\,,
\quad \quad 
\bb(s,t):= \left(\|b\|_{2r,2q;[s,t]}\right)^{2r}\,,
\\
&\cc(s,t):= \left(\|c\|_{r,q;[s,t]}\right)^r\,,
\quad \quad 
\uu(s,t):= \Big(\|u\|_{\frac{2r}{r-1},\frac{2q}{q-1};[s,t]}\Big)^{\tfrac{2r}{r-1}}\,.
\end{aligned}
\right.
\end{equation}
These are in fact \emph{absolutely continuous} in the following sense:
if we denote by $\omega$ any of the above, then for every $\epsilon >0,$
there is a constant $\delta _\epsilon >0$ with the property that for any non-overlapping family $(s_1,t_1),\dots(s_n,t_n)\subset I,$ with $\sum(t_i-s_i)\leq \delta _\epsilon ,$ then one has $\sum_{i=1}^n\omega (t_i,t_{i+1})\leq \epsilon .$ These basic facts be extensively used in the sequel.

Note that, without any further assumption on the coefficients,
these terms are in general not bounded above by a constant times $(t-s).$ This explains the necessity of working with the $\V^\alpha $ spaces instead of the H\"older spaces $C^\alpha .$
\end{remark}

An important observation is the following Lemma.
For convenience, and because it will be useful in the proof of Theorem \ref{thm:continuity}, we also include bounds on the drift term of $u$ in \eqref{solution:ladyzhenskaya}.
\begin{lemma}
\label{lem:drift_existence}
Given $u$ in $\Bc,$
define the drift terms
\begin{equation}
\label{nota:lambda_existence}
\langle\lambda _{t},\phi \rangle:=\int_0^t\langle A_ru_r,\phi \rangle\d r
\equiv
\iint_{[0,t]\times\O}(-a_r^{ij}\partial _iu_r\partial _j\phi
+ b_r^i\partial _iu_r\phi +c_ru_r\phi 
)\d r\d x\,,
\end{equation}
for $\phi$  in $W^{1,2},$ 
$(s,t)\in\Delta ,$
and
\begin{multline}
\label{nota:mu_existence}
\langle\mu _{t},\phi \rangle:=
2\int_0^t\langle u_rA_ru_r,\phi \rangle\d r
\equiv
2\iint_{[0,t]\times\O}\Big(-a_r^{ij}\partial _iu_r\partial _ju_r\phi
\\
-u_r a^{ij}_r\partial _iu_r \partial _j\phi 
+ b^i_r\partial _iu_ru_r\phi +c_r(u_r)^2\phi 
\Big)\d r\d x\,,
\end{multline}
for $\phi $ in $W^{1,\infty}.$
Then, there is a constant $C>0,$ depending only on
$T,r,q,M$ but not on $u,a,b,c$ in the spaces $\Bc,L^\infty,L^{2r}(L^{2q}),L^r(L^q),$
such that defining the controls $\aa,\bb,\cc,\uu$ as in Remark \ref{rem:controls_abc},
there holds for every $(s,t)$ in $\Delta _I:$
\begin{align}
\label{borne_controle_lambda}
&
\n{\delta \lambda _{st}}{-1}
\leq (t-s)^{1/2}\aa(s,t)^{1/2}+\bb(s,t)^{1/(2r)}\aa(s,t)^{1/2}(t-s)^{\frac{r-1}{2r}}
\\
\nonumber
&\quad \quad \quad \quad \quad \quad \quad \quad \quad \quad \quad 
\quad \quad \quad \quad 
+\cc(s,t)^{1/r}\uu(s,t)^{\frac{r-1}{2r}}(t-s)^{\frac{r-1}{2r}}
\leq C\left(1+\|u\|^2_{\Bc_{s,t}}\right)\,,
\intertext{and similarly:}
\label{borne_controle_mu}
&
\nn{\delta \mu_{st}}{-1}\leq
\aa(s,t) + \bb(s,t)^{1/(2r)}\aa(s,t)^{1/2}\uu(s,t)^{\frac{r-1}{2r}} 
+\cc(s,t)^{1/r}\uu(s,t)^{\frac{r-1}{r}}
\leq C\|u\|_{\Bc_{s,t}}^2.
\end{align}
\end{lemma}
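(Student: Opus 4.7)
The plan is to estimate each of the three contributions in the integrands of \eqref{nota:lambda_existence} and \eqref{nota:mu_existence} separately by H\"older's inequality, with exponents chosen to match exactly the controls defined in \eqref{controls_abc}. The key observation, which drives the choice of exponents, is that Assumption \ref{ass:b_c} (together with condition \eqref{values:r_q}) is designed precisely so that the dual H\"older exponents to $(2r,2q)$ and $(r,q)$ in space-time, when applied to $\nabla u$ and to $u$, fall into the admissible range of the interpolation inequality \eqref{consequence_rq}.

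For $\delta \lambda_{st}$, I would test against $\phi \in W^{1,2}$ with $|\phi|_{W^{1,2}} \le 1$. The elliptic term is handled by Cauchy-Schwarz in time and space, using $|a^{ij}| \le M$ and the fact that $\phi$ is time-independent, yielding the $(t-s)^{1/2}\aa(s,t)^{1/2}$ contribution. For the $b$-term, I apply H\"older with exponents $(2r,2q)$ on $b$, $(2,2)$ on $\nabla u$, and $\bigl(\tfrac{2r}{r-1},\tfrac{2q}{q-1}\bigr)$ on $\phi$; crucially, since $\phi$ does not depend on time, its time norm contributes only $(t-s)^{(r-1)/(2r)}$, and the Sobolev embedding $W^{1,2} \hookrightarrow L^{2q/(q-1)}$ (valid because $q>\max(1,d/2)$) converts the spatial $L^{2q/(q-1)}$ norm of $\phi$ into its $W^{1,2}$ norm. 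The $c$-term is handled analogously, pairing $c\in L^r L^q$ with $u\in L^{2r/(r-1)}L^{2q/(q-1)}$ and $\phi \in L^{2r/(r-1)}L^{2q/(q-1)}$ (again reduced to $(t-s)^{(r-1)/(2r)}|\phi|_{W^{1,2}}$ by time-independence and Sobolev embedding). This yields \eqref{borne_controle_lambda} directly.

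For $\delta \mu_{st}$, the argument is parallel but easier on $\phi$, which is now tested against $W^{1,\infty}$: the norms of $\phi$ and $\nabla \phi$ are simply absorbed into $|\phi|_{W^{1,\infty}}$, so no Sobolev embedding is needed. The new ingredient is the quadratic-in-$u$ nature of the integrand: the $a$-term splits into $|\nabla u|^2$ and $|u\nabla u|$, matching exactly the two summands in the definition of $\aa(s,t)$; the $b$-term produces $b\cdot \nabla u \cdot u$, handled by a triple H\"older with exponents $(2r,2q)$, $(2,2)$, $(2r/(r-1),2q/(q-1))$; and the $c$-term gives $c\cdot u^2$, paired via $(r,q)$ and $(r/(r-1),q/(q-1))$ and rewritten as $\uu^{(r-1)/r}$ using $\|u^2\|_{r/(r-1),q/(q-1)} = \|u\|^2_{2r/(r-1),2q/(q-1)}$. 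This yields the first inequality in \eqref{borne_controle_mu}.

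The final bound by $C(1+\|u\|^2_{\Bc_{s,t}})$ (resp.\ $C\|u\|^2_{\Bc_{s,t}}$) follows from three uniform observations: $\bb$ and $\cc$ are globally bounded on $I$ by fixed constants depending only on $b,c$; the interpolation inequality \eqref{consequence_rq} gives $\uu(s,t)^{(r-1)/(2r)} \le \beta \|u\|_{\Bc_{s,t}}$; and $\aa(s,t)^{1/2}$ is dominated by $C\|u\|_{\Bc_{s,t}}$ since $\|u\nabla u\|_{1,1;[s,t]} \le \|u\|_{L^\infty L^2}\|\nabla u\|_{L^1 L^2}\le T^{1/2}\|u\|^2_{\Bc_{s,t}}$. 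Young's inequality on products of these norms absorbs any remaining $(t-s)^{\gamma}$ factors. The main (minor) subtlety is tracking that the time exponents in H\"older always sum to $1$ with the ``trivial'' contribution $1/p$ coming from $\phi$ being time-independent, which is what produces the advantageous $(t-s)$-factors in \eqref{borne_controle_lambda} and allows the controls to be merely absolutely continuous rather than Lipschitz --- exactly the flexibility needed for the rough PDE framework, as emphasized in Remark \ref{rem:controls_abc}.
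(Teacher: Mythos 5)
Your proposal is correct and follows essentially the same route as the paper: term-by-term H\"older estimates with exactly the exponents dictated by \eqref{complementary_powers_1}, the Sobolev embedding $W^{1,2}\hookrightarrow L^{2q/(q-1)}$ for the $\lambda$-bound, the $(t-s)$-factors coming from the time-independence of $\phi$, and the conclusion via \eqref{consequence_rq} together with $\|u\nabla u\|_{1,1;[s,t]}\leq (t-s)^{1/2}\|u\|_{\infty,2}\|\nabla u\|_{2,2}$. The only cosmetic difference is your explicit remark on how the second inequalities absorb $\bb,\cc$ into the constant, which matches how the estimate is used later in the paper.
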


\begin{proof}[Proof of Lemma \ref{lem:drift_existence}]
\textit{Proof of \eqref{borne_controle_lambda}.}
Take any $\phi \in W^{1,2}.$
For $u$ in $\Bc,$
we have
\[
-\iint_{\stO}a^{ij}\partial _ju\partial _i\phi\d r\d x \leq M \|\nabla u\|_{1,2;[s,t]}\n{\phi }{1}\leq M(t-s)^{1/2}\|\nabla u\|_{2,2;[s,t]}\n{\phi }{1}\,.
\]
By the equality
\begin{equation}
\label{complementary_powers_1}
\frac{1}{2r}+ \frac{1}{2} + \frac{r-1}{2r} =1\,,
\end{equation}
(and similarly for $q$),
H\"older Inequality yields:
\begin{equation}
\label{ineq_b_sobolev}
\iint_{[s,t]\times\O}b^i\partial _iu\phi\d r\d x
\leq 
\|b\|_{2r,2q;[s,t]}\|\nabla u\|_{2,2;[s,t]}(t-s)^{\tfrac{r-1}{2r}}|\phi |_{L^{\frac{2q}{q-1}}}\,.
\end{equation}
Now, in dimension one and two, $W^{1,2}$ embeds into every $L^p$ space for $p\in[1,\infty),$ so the term $|\phi |_{L^{\frac{2q}{q-1}}}$ is bounded by a constant times $\n{\phi }{1}.$
For $d>2,$ since by assumption
\[
q>\max(1,\frac{d}{2})=\frac{d}{2}\,,
\]
it is seen that
\[
\frac{2q}{q-1}<\frac{2d}{d-2}=:p^*\,.
\]
By the the Sobolev embedding theorem, we have
\[
W^{1,2}\hookrightarrow L^{p^*}\subset L^{\frac{2q}{q-1}}\,.
\]
Hence, in both cases, we see from \eqref{ineq_b_sobolev} that
\[
\iint_{[s,t]\times\O}b^i\partial _iu\phi
\leq 
\|b\|_{2r,2q;[s,t]}\|\nabla u\|_{2,2;[s,t]}(t-s)^{\frac{r-1}{2r}}\n{\phi }{1}\,.
\]

Similarly, we have for the last term
\[
\begin{aligned}
\iint_{[s,t]\times\O}cu\phi \d r\d x
\leq&
\|c\|_{r,q;[s,t]}\|u\|_{\frac{2r}{r-1},\frac{2q}{q-1};[s,t]}|\phi |_{L^{\frac{2q}{q-1}}}(t-s)^{\frac{r-1}{2r}}
\\
\leq&\|c\|_{r,q;[s,t]}\|u\|_{\frac{2r}{r-1},\frac{2q}{q-1};[s,t]}(t-s)^{\frac{r-1}{2r}}\n{\phi }{1}\,.
\end{aligned}
\]
Adding the above contributions yields the first part of \eqref{borne_controle_lambda}.

Next, from $\|u\nabla u\|_{1,1}\leq (t-s)^{1/2}\|u\|_{\infty,2}\|\nabla u\|_{2,2}$
it is clear that
\begin{equation}
\label{interpolation_aa}
\aa(s,t)^{1/2}\leq C(M,T) \|u\|_{\Bc_{s,t}}\,,
\end{equation}
whereas for the other terms, we use \eqref{consequence_rq}, so that
$\uu(s,t)^{\frac{r-1}{2r}}\leq \beta \|u\|_{\Bc_{s,t}}.$
This yields the second part of the estimate \eqref{borne_controle_lambda}.

\item[\indent\textit{Proof of \eqref{borne_controle_mu}.}]
Take any $\phi $ in $W^{1,\infty}.$
From H\"older Inequality, it holds true that
\begin{equation}
\label{bound:a_u}
\iint_{[s,t]\times \O}-a^{ij}\partial _ju\partial _i(u\phi ) 
\leq M\left(\|\nabla u\|^2_{2,2;[s,t]}+\|u\nabla u\|_{1,1;[s,t]}\right)\nn{\phi }{1}\,.
\end{equation}
Now, because of
\eqref{complementary_powers_1}
we have
\begin{equation}\label{bound:b_u}
\iint_{[s,t]\times\O}|u||b^i||\partial _iu||\phi |\d r\d x\leq 
\|b\|_{2r,2q;[s,t]}\|\nabla u\|_{2,2;[s,t]}\|u\|_{\frac{2r}{r-1},\frac{2q}{q-1};[s,t]}\nn{\phi }{0}
\end{equation}
as well as
\begin{equation}\label{bound:c_u}
\iint_{[s,t]\times\O}|c||u|^2|\phi |\d r\d x\leq \|c\|_{r,q;[s,t]}\|u\|_{\frac{2r}{r-1},\frac{2q}{q-1};[s,t]}^2
\nn{\phi }{0}\,.
\end{equation}

This yields the first part of the estimate \eqref{borne_controle_mu}

Making use again of the bounds
\eqref{interpolation_aa}-\eqref{consequence_rq}
we obtain the second part of \eqref{borne_controle_mu}.
\end{proof}
As a straightforward, but important consequence, we have the following result.
\begin{corollary}\label{cor:u_natural}
Given a smooth path $z$ and its canonical geometrical lift $\Z\equiv(Z,\ZZ),$ let $u$ be a weak solution of \eqref{approximate_pb}, in the sense of \eqref{solution:ladyzhenskaya}.
Define the path $u^2 :I\to L^1(\R^d)$ by $ u^2_t(x):=u_t(x)^2,$ for a.e.\ $(t,x)\in I\times\O.$

Then, $u^2$ is a weak solution
in the sense of Definition \ref{def:weak_sol} to
\begin{equation}\label{approximate_pb3}
\delta u^2_{st} =2\int_s^tuAu\d r +\hat B_{st} \left(u^2_s\right) +\hat \BB_{st}\left(u^2_s\right) +u^{2,\natural}_{st}\,,
\end{equation}
on the scale $(W^{k,\infty})_{k\in \N},$
where we denote by $\hat B\equiv(\hat B,\hat \BB)$ the $1/\alpha $-unbounded rough driver given by \eqref{nota:formal_adj}, with $\nu $ replaced by $\hat \nu :=2\nu .$
\end{corollary}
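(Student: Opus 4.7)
The plan is to perform a double Taylor expansion of the noise integral in \eqref{ladyzhenskaya_2} around the frozen value $u^2_s$, identify the first- and second-order contributions with $\hat B_{st}(u^2_s)$ and $\hat\BB_{st}(u^2_s)$, and show that the residual lies in $\V^{1+}_{2,\mathrm{loc}}(I;(W^{3,\infty})^*)$. Since $z$ is smooth, every time-integral below is a classical Riemann integral, so no rough-path sewing is needed at this stage. I fix $\phi\in W^{3,\infty}$ and $(s,t)\in\Delta_I$, and denote by $(F^k)^*\phi:=-\partial_i(\sigma^{ki}\phi)+2\nu^k\phi\in W^{2,\infty}$ the formal adjoint of the first-order operator $F^k\psi:=\sigma^{ki}\partial_i\psi+2\nu^k\psi$ attached to the noise in the equation for $u^2$. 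After one integration by parts, \eqref{ladyzhenskaya_2} reads
\[
\langle\delta u^2_{st},\phi\rangle=\langle\delta\mu_{st},\phi\rangle+\int_s^t\dot z^k_r\langle u^2_r,(F^k)^*\phi\rangle\,dr,
\]
with $\mu$ as in Lemma \ref{lem:drift_existence}; the calibration $\hat\nu=2\nu$ is precisely chosen so that $\hat B^*_{st}\phi=Z^k_{st}(F^k)^*\phi$.

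I would next split $u^2_r=u^2_s+(u^2_r-u^2_s)$ inside the noise integral. The frozen piece immediately yields $Z^k_{st}\langle u^2_s,(F^k)^*\phi\rangle=\langle\hat B_{st}u^2_s,\phi\rangle$. For the correction piece, I apply \eqref{ladyzhenskaya_2} a second time, with the admissible test $(F^k)^*\phi\in W^{2,\infty}\subset W^{1,\infty}$, to expand
\[
\langle u^2_r-u^2_s,(F^k)^*\phi\rangle=\langle\delta\mu_{sr},(F^k)^*\phi\rangle+\int_s^r\dot z^\ell_\tau\langle u^2_\tau,(F^\ell)^*(F^k)^*\phi\rangle\,d\tau,
\]
and freeze $u^2_\tau$ at $u^2_s$ once more. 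The doubly-frozen term carries the iterated integral $\int_s^t\dot z^k_r\int_s^r\dot z^\ell_\tau\,d\tau\,dr$, which is the corresponding component of $\ZZ_{st}$ for the canonical lift of smooth $z$. A direct algebraic verification, expanding $(F^\ell)^*(F^k)^*\phi$ via the Leibniz rule, shows that this iterated adjoint reproduces the bracket in \eqref{nota:formal_adj} with $\nu\mapsto 2\nu$, so the second-order contribution equals $\langle\hat\BB_{st}u^2_s,\phi\rangle$. Collecting the leftover terms defines $u^{2,\natural}_{st}$ and yields \eqref{approximate_pb3} as an equality of functionals on $W^{3,\infty}$.

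The main obstacle is the final verification that $u^{2,\natural}\in\V^{1+}_{2,\mathrm{loc}}(I;(W^{3,\infty})^*)$. The remainder consists of two pieces:
\[
R_1(\phi):=\int_s^t\dot z^k_r\langle\delta\mu_{sr},(F^k)^*\phi\rangle\,dr,\qquad R_2(\phi):=\int_s^t\!\!\int_s^r\!\dot z^k_r\dot z^\ell_\tau\langle u^2_\tau-u^2_s,(F^\ell)^*(F^k)^*\phi\rangle\,d\tau\,dr.
\]
For $R_1$, the sharper form of \eqref{borne_controle_mu} bounds $\nn{\delta\mu_{sr}}{-1}$ by a sum of products of the regular controls $\aa,\bb,\cc,\uu$ of Remark \ref{rem:controls_abc}, and since $\nn{(F^k)^*\phi}{1}\leq C(|\sigma|_{W^{2,\infty}}+|\nu|_{W^{1,\infty}})\nn{\phi}{2}$, I obtain $|R_1(\phi)|\leq C\,|z|_{1\mathrm{-var};[s,t]}\,\omega_u(s,t)\,\nn{\phi}{3}$ for a regular control $\omega_u$ in $u$. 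For $R_2$, applying \eqref{ladyzhenskaya_2} once more to the admissible test $(F^\ell)^*(F^k)^*\phi\in W^{1,\infty}$ and iterating the same estimates yields an analogous bound with an extra factor of $|z|_{1\mathrm{-var};[s,t]}$. By the product-of-controls principle \eqref{product_controls}—the map $|z|_{1\mathrm{-var};[s,t]}$ being itself a regular control of full linear order by smoothness of $z$—both estimates are dominated by $\omega(s,t)^{\alpha}$ for some single regular control $\omega$ and some $\alpha>1$, placing $u^{2,\natural}$ in $\V^{1+}_2(I;(W^{3,\infty})^*)\subset\V^{1+}_{2,\mathrm{loc}}(I;(W^{3,\infty})^*)$, as required.
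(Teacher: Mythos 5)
Your proposal is correct and follows essentially the same route as the paper's proof: iterate the weak formulation \eqref{ladyzhenskaya_2} with the tests $\phi$, $(F^k)^*\phi$, freeze $u^2$ at time $s$ so that the canonical iterated integral produces $\ZZ^{k\ell}_{st}$ and the composed adjoints reproduce $\hat\BB^*$ with $\hat\nu=2\nu$, and then control the leftover terms (your $R_1$, $R_2$ are exactly the paper's $\T_A$, $\T_{\delta u^2}$) via Lemma \ref{lem:drift_existence} and the $1$-variation of the smooth path $z$, concluding with the product-of-controls argument. The only cosmetic difference is that for $R_2$ you expand $\delta u^2_{s\tau}$ once more with the equation, whereas the paper bounds $\delta u^2$ directly in $(W^{1,\infty})^*$ by a control $\varepsilon(s,t)$; both amount to the same use of the equation and yield the same $\V^{1+}_{2,\mathrm{loc}}$ membership.
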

\begin{proof}
For simplicity, in this proof we let for $k\in\{1,\dots,K\}:$
$\sigmab^k:=\sum\nolimits_{i} \sigma ^{k ,i}(x)\partial _{i}.$
Define the 2-index distribution-valued map
\[
u^{2,\natural}_{st}:=\delta u^2_{st}-2\int_s^t(Au)u\d r - \hat B_{st}(u^2_s)-\hat \BB_{st}(u^2_s)\,.
\]
Using the equation \eqref{ladyzhenskaya_2} twice we see that for any $\phi \in C_c^\infty(\O):$
\begin{equation}
\label{expansion_existence}
\begin{aligned}
\langle u^{2,\natural}_{st},\phi \rangle=&
\int_s^t\big\langle u^2_r -u^2_s ,(\sigmab^{k,*}+\hat \nu^k )\phi \big\rangle\d z^k_r
-\langle u^2_s,\hat\BB_{st}^*\phi \rangle
\\
=&
\iint_{\Delta ^2_{[s,t]}}\Big\langle  u^2_\tau  ,(\sigmab^{\ell ,*} +\hat\nu^{\ell } )(\sigmab^{k,*}+\hat\nu^k )\phi \Big\rangle \d z^\ell _\tau \d z^k_r
 -\langle u^2_s,\hat\BB_{st}^*\phi \rangle
\\
&\quad \quad \quad \quad
+ 2\iint_{\Delta ^2_{[s,t]}}\langle u_\tau A_\tau u_\tau ,(\sigmab^{k,*}+\hat\nu^k )\phi \rangle\d \tau \d z^k_r
\\
=&\iint_{\Delta ^2_{[s,t]}} \Big\langle \delta u^2_{s\tau }  ,(\sigmab^{\ell ,*} +\hat\nu^\ell  )(\sigmab^{k,*}+\hat\nu^k )\phi \Big\rangle\d z^\ell _\tau \d z^k_r
\\
&\quad \quad \quad \quad 
+ 2\iint_{\Delta ^2_{[s,t]}}\langle u_\tau A_\tau u_\tau ,(\sigmab^{k,*}+\hat\nu )\phi \rangle\d \tau \d z^k_r
\quad =:\T_{\delta u^2}+\T_A
\,.
\end{aligned}
\end{equation} 
From Assumption \ref{ass:sigma_nu} on $\sigma ,\nu ,$ and the fact that, by the classical theory for \eqref{solution:ladyzhenskaya}, $u$ belongs to the space $\Bc,$ it is immediately seen that every term above makes sense.
It remains to show that each of the terms above belongs to $\V^{1+}_{2,\rm{loc}}(I;\R),$ with a bound depending linearly on $\nn{\phi }{3}.$

For the first term, observe that
\begin{multline*}
\sup_{|\phi |_{W^{1,\infty}}\leq 1}
\langle \phi,\delta u^2_{st}\rangle 
\leq
\nn{\mu _{st}}{-1}
+\sup_{|\phi |_{W^{1,\infty}}\leq 1}
\int_s^t\langle\sigma ^{ki}\partial _i(u^2),\phi \rangle\d z^k
\\
+\sup_{|\phi |_{W^{1,\infty}}\leq 1}\int_s^t
\langle\hat\nu ^ku^2,\phi \rangle\d z^k
\quad \leq
\varepsilon(s,t)\,.
\end{multline*}
where $\varepsilon(s,t)$ is a control depending on $|z|_{1-\rm{var}},|\nu |_{L^\infty},|\sigma |_{W^{1,\infty}},\sup_{r\in[s,t]}|u_r|^2_{L^2}$ and the control $\omega _\mu $ given in Lemma \ref{lem:drift_existence}.
Consequently, we have the bound
\begin{equation}
\label{borne_1_var}
\begin{aligned}
\T_{\delta u^2}&\leq
\Big(\sum\nolimits_{k,\ell }\iint_{\Delta ^2_{[s,t]}}\d |z^\ell |\d |z^k|\Big)\varepsilon (s,t)\n{(\sigmab^*+\nu )((\sigmab^*+\nu ))\phi }{1}
\\
&\leq
C(|\sigma |_{W^{3,\infty}},|\nu |_{W^{2,\infty}})\left(|z|_{1-\mathrm{var};[s,t]}\right)^2\varepsilon (s,t)\n{\phi }{3}\,.
\end{aligned}
\end{equation}

Similarly, we have
\begin{equation}
\label{borne_1_var_2}
\begin{aligned}
\T_A
&\leq \Big(\sum\nolimits_k\int_s^t\omega _\mu (s,r)\d |z_r^k|\Big)\nn{(\sigmab^*+\hat\nu )\phi }{1}
\\
&\leq C(|\nu |_{W^{1,\infty}},|\sigma |_{W^{2,\infty}})|z|_{1-\mathrm{var};[s,t]}\omega _\mu (s,t)\,.
\end{aligned}
\end{equation}

The conclusion follows from \eqref{borne_1_var}, \eqref{borne_1_var_2}, and Remark \ref{rem:control}, we have:
\[
u^{2,\natural}\in \V_ {2,\rm{loc}}^{1+}(I;(W^{3,\infty})^*)\,,
\]
which proves the corollary.
\end{proof}
\begin{proof}[Proof of Proposition \ref{pro:energy}]
Testing against $\phi =1\in W^{3,\infty}$ in \eqref{approximate_pb3}, 
we have, using \eqref{uniform_ellipticity} and the inequality $|\sum b^i\partial _iu|\leq m/2\sum (b^i)^2 +1/(2m)\sum (\partial _iu)^2:$
\begin{equation}\label{rel:utile}
\begin{aligned}
\delta \left(|u|^2_{L^2}\right)_{st}+2m\int_s^t|\nabla u_r|^2_{L^2}\d r
\leq
\iint_{[s,t]\times\O}\left(m|\nabla u_r|^2+\big(\tfrac{1}{m}\tsum_{i\leq d} (b^i)^2 +2|c|\big)u^2\right)\d r\d x
\\
+\left(\omega _B(s,t)^\alpha +\omega _B(s,t)^{2\alpha }\right)|u_s|^2_{L^2}
+\langle u^{2,\natural}_{st},1\rangle\,.
\end{aligned}
\end{equation}
Note that by \eqref{interpolation_inequality},
\begin{multline*}
\iint_{[s,t]\times\O}\tfrac{1}{m}\big(\tsum(b^i)^2 +|c|\big)u^2\d r\d x
\leq
\left\|\tfrac{1}{m}\tsum(b^i)^2 +|c|\right\|_{r,q}\|u\|^2_{\frac{2r}{r-1},\frac{2q}{q-1}}
\\
\leq
(1+\tfrac{\beta ^2}{2m})\left(\|b\|_{2r,2q;[s,t]}^2+\|c\|_{r,q;[s,t]}\right)\|u\|_{\Bc_{s,t}}^2
\\
=C\left(\bb(s,t)^{1/r} +\cc(s,t)^{1/r}\right)\|u\|_{\Bc_{s,t}}^2
\,,
\end{multline*}
where we make use of the notation \eqref{controls_abc} and we recall that $\beta >0$ denotes the sharpest constant in \eqref{consequence_rq}.
Therefore, defining
$G_t:=|u_t|^2_{L^2}+\min(1,m)\int_0^t|\nabla u_r|^2_{L^2}\d r$
we have
\begin{equation}
\label{rel:utile}
\delta G_{st}
\leq
C\left(\bb(s,t)^{1/r}+\cc(s,t)^{1/r}
+\omega _B(s,t)^\alpha +\omega _B(s,t)^{2\alpha }\right)\|u\|_{\Bc_{s,t}}^2+\langle u^{2,\natural}_{st},1\rangle\,,
\end{equation}
for a constant $C>0$ depending on $m,r,q$ only.
Now, combining Lemma \ref{lem:drift_existence} and Proposition \ref{pro:apriori}, we 
can estimate the remainder as follows
\begin{equation}\label{apriori_estimate}
\nn{u^{2,\natural}_{st}}{-3}
\leq C\left(\omega _B(s,t)^{3\alpha }+\omega _B(s,t)^{\alpha }\right)\|u\|_{\Bc_{s,t}}^2\,,
\end{equation}
where the constant above depends on $\|b\|_{2r,2q},\|c\|_{r,q},$ but also
on $|\sigma |_{W^{3,\infty}},|\nu |_{W^{2,\infty}}.$
Hence, 
using \eqref{rel:utile}, \eqref{borne_controle_mu} and \eqref{interpolation_inequality}, we obtain that
\begin{equation}
 \delta G_{st}\leq 
\omega (s,t)^{1/\kappa }\Big(\sup_{r\in[s,t]}G_r\Big)\,,
\end{equation}
provided $\omega (s,t)\leq L$ is small enough,
where we let $\kappa :=\max(1/\alpha ,r),$ and
\[
\omega :=C\left(\bb ^{\kappa /r} +\cc ^{\kappa /r}+(\omega _Z)^{\kappa \alpha }+(\omega _Z)^{2\kappa \alpha }  +(\omega _Z)^{3\kappa\alpha  } \right)\,,
\]
for an appropriate constant $C=C(\kappa ,M,|\sigma |_{W^{2,\infty}},|\nu |_{W^{1,\infty}}),$

Applying Lemma \ref{lem:gronwall} with $\varphi :=0,$
this gives us the energy inequality \eqref{energy_inequality},
for a constant depending, through \eqref{nota:omega_B} and Proposition \ref{pro:apriori}, on the quantities 
\[
\omega _Z,m,M,T,\|b\|_{2r,2q},\|c\|_{r,q},|\sigma |_{W^{3,\infty}}\text{ and }|\nu |_{W^{2,\infty}}\,.
\qedhere
\]
\end{proof}

\begin{remark}
\label{rem:sto_par_2}
Under the assumptions of Remark \ref{rem:sto_par_1}, consider a smooth approximation $z^\epsilon $ of a path $z\in \V^{\alpha }_1$ for which we are given an enhancement $\Z\equiv(Z,\ZZ)$ in $\mathscr C^{\alpha}(I;\R).$
Assume that the latter is \emph{not geometric}, in the sense that the bracket given by
\[
\delta [\Z]_{st}:= (Z_{st})^2 - 2\ZZ_{st},\quad (s,t)\in \Delta _I\,,
\]
where by definition $Z:=\delta z,$ is not the zero function.
Moreover, for each $\epsilon >0,$ denote by $u_{\mathrm{can}}^{\epsilon}\in\Bc$ the unique solution of \eqref{approximate_pb} associated to the canonical lift $\Z_{\mathrm{can}}^\epsilon \equiv(Z^\epsilon , \frac12(Z^\epsilon )^2)$ of $z^\epsilon .$

Note that $\Z_{\mathrm{can}}^\epsilon $ converges to $\ZZ^{\mathrm{geo}}\equiv(Z,\frac12(Z)^2)$ in $\mathscr C^{\alpha}(I;\R)$ (see \eqref{RP_metric}), so that even if one has a candidate $u^{[\Z]}$ for the equation driven by the non-geometric rough path $\Z$, it is not expected that $u^{\epsilon }_{\mathrm{can}}$ converges to $u^{[\Z]},$ (regardless of the topology considered).
Instead, one has to consider the sequence $u^{\epsilon }:=u^{\varphi ^\epsilon },$ 
where $\varphi ^\epsilon $ denotes some smooth correction term converging to $[\Z],$
and by definition $u^\epsilon $ is supposedly a solution of the following modified version of \eqref{approximate_pb}:
\begin{equation}
\label{modified_eq}
\partial _tu^\epsilon -a\partial _{xx}u^\epsilon  = \sigma\partial _xu^\epsilon  \dot z^\epsilon  - \frac12\sigma ^2\partial _{xx}u^\epsilon \dot\varphi ^\epsilon,
\quad \quad u_0^\epsilon =u_0.
\end{equation}
Proceeding as in \eqref{expansion_existence}, there holds formally,
for each $(s,t)\in\Delta _I:$
\[
u_t^2-u_s^2-\int_s^t2au_r\partial _{xx}u_r\d r
\approx\enskip 
\sigma \partial _x(u_s^2)Z_{st} +\sigma ^2\partial _{xx}(u_s^2) \ZZ_{st} 
+\sigma ^2(\partial _{x}u_s)^2\delta \varphi _{st}.
\]
In this case, it is clear that the computations made in the proof Proposition \ref{pro:energy} fail, unless some smallness assumption on $\sigma $ in terms of $\dot \varphi $ and $a$ is made. Hence, our method to obtain the energy inequality for the rough equation \eqref{approximate_pb} ceases to work.
\end{remark}
\section{Tensorization}
\label{sec:tensorization}

The aim of this section is to introduce the set-up for the proof of uniqueness presented in Section  \ref{sec:uniqueness}. Recall that in Section \ref{sec:energy} we considered a smooth driving signal $Z$ and derived an energy estimate depending  only on the rough path norm of the associated canonical lift $\mathbf{Z}$. Nevertheless, the smoothness of $Z$ was only used in Corollary \ref{cor:u_natural} in order to verify that $u^2$ solves \eqref{approximate_pb3}. Accordingly, the result of Proposition \ref{pro:energy} remains valid in the case of a rough driving signal $Z$ provided one can justify the equation for $u^2$. This is the main challenge of the proof of uniqueness. Indeed, by linearity of \eqref{zakai}, uniqueness follows once we show that 
$\|u\|_{C(I;L^2)}\leq C|u_0|_{L^2}$ is satisfied by every weak solution in the sense of Definition \ref{def:weak_sol}. However, recall that due to Definition \ref{def:weak_sol}, the required regularity of test functions that guarantees smallness of the remainder is out of reach for general weak solutions. Consequently, it is not possible to simply test by the solution and to obtain the equation for $u^2$. Our approach relies on a tensorization procedure which is an analog of the doubling of variables method known from the classical PDE theory.

\subsection{Preliminary material and main result}
For $j =1,2$
consider $\B^j \equiv(B^j ,\BB^j ),$ an unbounded rough driver on the scale $(W^{k,2})_{k\in\N},$ 
a drift term
$\lambda ^j\in \V_ 1^1(I;W^{-1,2})$ and assume the existence of a weak solution $u^j\in C(I;L^2)$ of 
\begin{equation}\label{equation:u_gamma}
\d u^j =\d \lambda^j +\d \B^j u^j \,,
\end{equation}
in the sense of Definition \ref{def:weak_sol} on the scale $(W^{k,2})_{k\in\N}$.
For $R>0$ we define $B_R:=\{x\in\R^d:\sum_{i\leq d}|x_i|^2\leq R^2\}$ and let
\begin{equation}
\label{Omega}
\Omega :=\left\{(x,y)\in\OO:\frac{x-y}{2}\in B_1\right\}\,.                                                           \end{equation}
As the first step, we aim to show that the new unknown
\begin{equation}\label{nota:U}
\u(x,y):=(u^1\otimes u^2)(x,y)=u^1(x)u^2(y)\,,\quad \quad (x,y)\in\Omega \,,
\end{equation}
is itself a solution in the sense of Definition \ref{def:weak_sol} of a rough PDE on a suitable scale. This is the first step towards the proof of uniqueness and can be regarded as a linearization of the product operation $u(x)u(x)$. The second step, which we perform in Section \ref{sec:uniqueness}, then consists of the passage to the diagonal. Namely, we prove that the evolution of $\u(x,x)=u(x)u(x)$ is given by \eqref{ito_formula}.

For $k\in\N,$ define 
\begin{equation}\label{scale2_2}
\F_k:=\left\{\Phi\in W^{k,\infty}(\O),\,\spt\Phi \subset \Omega\right\}\,,\quad \nnn{\cdot }{k}:=|\cdot |_{W^{k,\infty}}\,,
\end{equation}
and additionally, let $\F_{-k}:=(\F_k)^*.$

Denote by $\X  \equiv(X ,\XX )$ the unbounded rough driver given for every $(s,t)\in\Delta$ by 
\begin{equation}\label{nota:Gamma_gene}
\left[
\begin{aligned}
&X_{st}:=B^1_{st}\otimes \id +\id\otimes B^2_{st},
\\
&\XX_{st}:=\BB^1_{st}\otimes \id+\id\otimes\, \BB^2_{st} +B^1_{st}\otimes B^2_{st}\,,
\end{aligned}
\right.
\end{equation}
(the proof that the properties \ref{RD1}-\ref{RD2} are fulfilled is an easy exercise left to the reader).
Furthermore, for every $\Phi \in\F_1$ and $(s,t)\in\Delta ,$
define the approximate drift as the distribution
\begin{equation}\label{nota:Pi}
\pi_{st} := u^1_s\otimes \delta \lambda^2_{st}+ \delta \lambda ^1_{st}\otimes u^2_s \,.
\end{equation}
\begin{remark}
 \label{rem:caracterization}
Let $k\in\N,$ and define 
\[N_k:=\card\{\gamma\in \N^d,\, |\gamma |:=\gamma _1 +\dots +\gamma _d\leq k\}\,.
\]
In the proof below, we will make use of the following well-known characterization of the spaces $W^{-k,2}\equiv(W^{k,2})^*$ (see e.g.\ \cite[Proposition 9.20]{brezis2010functional}).
For each $v$ in $W^{-k,2},$ there exist a (non-unique) $f$ in $(L^2)^{N_k}$ such that
\begin{equation}
\label{representation1}
\text{for every}\enskip \phi \in W^{k,2}\,,\quad \quad 
~_{W^{-k,2}}\big\langle v, \phi  \big\rangle_{W^{k,2}} = \sum_{|\gamma|\leq k}\big(f_\gamma,\D^\gamma \phi\big)_{L^2} 
\end{equation}
where $(\cdot ,\cdot )_{L^2}$ denotes the $L^2$ inner product, and $\D^\gamma \phi :=\partial _{\gamma _1}\cdots\partial _{\gamma _d}\phi .$
Moreover, there holds
\begin{equation}
\label{representation_norms}
|v|_{W^{-k,2}}\leq |f|_{L^2}\quad \text{and}\quad \inf_{f\in (L^2)^{N_k},\,\text{s.t.\ \eqref{representation1} holds}}\left(\sum_{|\gamma|\leq k}|f_\gamma|_{L^2}^2\right)^{1/2}\leq |v|_{W^{-k,2}}\,.
\end{equation}
\end{remark}
First, we need the following.
\begin{lemma}
\label{lem:small_pi}
The distribution-valued 2-index map $\pi$ defined in \eqref{nota:Pi} has finite variation with respect to $\F_{-1},$ and we have the bound
\begin{equation}
\label{pi_finite_var}
\nnn{\pi_{st}}{-1} \leq C\left(\n{\delta \lambda ^1_{st}}{-1} \n{u^2_s}{0} +\n{u^1_s}{0}\n{\delta \lambda ^2_{st}}{-1}  \right)\,,\quad \forall(s,t)\in\Delta \,,
\end{equation}
for some universal constant $C>0$.

Furthermore,
assuming that $\lambda ^1,\lambda ^2\in \AC(I;W^{-1,2})$, then there is a unique $\Xi \in \V^{1}_1(I;\F_{-1})$ such that for every $t\in I$ and every sequence of partitions $|\pp_n|\to0$ of $[0,t]$ we have
\begin{equation}
\label{riemann_sums_Xi}
\lim_{n\to\infty}\sum_{(\pp_n)}\pi_{t_i t_{i+1}}\to \Xi _{t}\quad \text{in}\quad \F_{-1}\,.
\end{equation}
\end{lemma}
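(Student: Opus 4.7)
The plan is to prove the two parts separately. For Part~1, the key tool is the duality characterisation of $W^{-1,2}$ from Remark~\ref{rem:caracterization} combined with the compact-support condition $\spt\Phi\subset\Omega$.

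Given $\Phi\in\F_1$ with $\nnn{\Phi}{1}\leq 1$, I would represent $\delta\lambda^j_{st}\in W^{-1,2}$ in the form $f^j_{0,st}-\sum_i\partial_i f^j_{i,st}$ with functions $f^j_{\gamma,st}\in L^2$ satisfying $\bigl(\sum_{|\gamma|\leq 1}|f^j_{\gamma,st}|_{L^2}^2\bigr)^{1/2}\leq C\,\n{\delta\lambda^j_{st}}{-1}$, and then expand
\begin{equation*}
\langle\delta\lambda^1_{st}\otimes u^2_s,\Phi\rangle=\iint_{\R^{2d}}f^1_{0,st}(x)u^2_s(y)\Phi(x,y)\,dxdy+\sum_i\iint_{\R^{2d}}f^1_{i,st}(x)u^2_s(y)\partial_{x_i}\Phi(x,y)\,dxdy.
\end{equation*}
Since $\Phi$ vanishes outside $\Omega$ and $\int\mathbf{1}_\Omega(x,y)\,dy=|B_2|$ uniformly in $x$ (and symmetrically in $y$), Fubini combined with Cauchy--Schwarz yields, for any $g_1\in L^2_x$, $g_2\in L^2_y$ and $\Psi\in\F_0$,
\begin{equation*}
\left|\iint_{\R^{2d}}g_1(x)g_2(y)\Psi(x,y)\,dxdy\right|\leq C\nnn{\Psi}{0}|g_1|_{L^2}|g_2|_{L^2}.
\end{equation*}
Applying this inequality to the four tensor-type terms arising from $\pi_{st}$ (noting that $\nnn{\partial_{x_i}\Phi}{0}\leq\nnn{\Phi}{1}$) produces precisely \eqref{pi_finite_var}.

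For Part~2, since $W^{-1,2}$ is reflexive it enjoys the Radon--Nikodym property, so the hypothesis $\lambda^j\in\AC(I;W^{-1,2})$ provides Bochner densities $\dot\lambda^j\in L^1(I;W^{-1,2})$ with $\delta\lambda^j_{st}=\int_s^t\dot\lambda^j_r\,dr$. I would then define
\begin{equation*}
\langle\Xi_t,\Phi\rangle:=\int_0^t\bigl(\langle u^1_r\otimes\dot\lambda^2_r,\Phi\rangle+\langle\dot\lambda^1_r\otimes u^2_r,\Phi\rangle\bigr)\,dr,\qquad\Phi\in\F_1,
\end{equation*}
where Part~1 applied pointwise ensures each integrand is well-defined and integrable in $r$, and gives $\Xi\in\V^1_1(I;\F_{-1})$ with control $\omega_\Xi(s,t):=C\int_s^t\bigl(|\dot\lambda^1_r|_{W^{-1,2}}|u^2_r|_{L^2}+|u^1_r|_{L^2}|\dot\lambda^2_r|_{W^{-1,2}}\bigr)dr$. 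For the Riemann-sum convergence, I would rewrite
\begin{equation*}
\sum_{(\pp_n)}\pi_{t_it_{i+1}}=\int_0^t\bigl(u^1_{\tau_n(r)}\otimes\dot\lambda^2_r+\dot\lambda^1_r\otimes u^2_{\tau_n(r)}\bigr)dr,
\end{equation*}
with $\tau_n(r):=t_i$ for $r\in[t_i,t_{i+1})$, and apply Part~1 to bound the $\F_{-1}$-norm of $\Xi_t-\sum_{(\pp_n)}\pi_{t_it_{i+1}}$ by $C\int_0^t(|u^1_{\tau_n(r)}-u^1_r|_{L^2}|\dot\lambda^2_r|_{W^{-1,2}}+|\dot\lambda^1_r|_{W^{-1,2}}|u^2_{\tau_n(r)}-u^2_r|_{L^2})dr$. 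The uniform continuity of $u^j:I\to L^2$ together with dominated convergence forces this quantity to $0$. Uniqueness of $\Xi$ is then immediate since $\Xi_t$ is characterised as this limit.

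The main technical obstacle is the tensorial variation bound of Part~1: one needs to convert a duality pairing between a product of an $L^2$-function and a $W^{-1,2}$-distribution living on $\R^d\times\R^d$ into a product of their individual norms. The compact-support constraint encoded by $\Omega$ is essential, since it turns ``integration in the missing variable'' into integration over a ball of bounded measure, restoring a usable Fubini/Cauchy--Schwarz estimate.
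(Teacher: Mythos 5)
Your argument is correct and essentially the paper's: Part~1 rests on the same $W^{-1,2}$ representation from Remark \ref{rem:caracterization} together with the uniformly bounded fibres of $\Omega$ (you exploit this via a direct Fubini/Cauchy--Schwarz on the strip, the paper via the change of variables $(x_+,x_-)$ and the translation isometries --- interchangeable computations), followed by taking the infimum over representations. In Part~2 you establish the Riemann-sum convergence by hand, using the $L^2$-continuity of $u^j$ and dominated convergence, where the paper instead invokes its appendix identity \eqref{bochner_integral}; your version is a self-contained variant of the same argument and is equally valid.
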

\begin{notaAdd}
For $a\in\O$, we will henceforth denote by $\tt _a$ the translation operator, namely for $\psi  \in L^2(\O)$:
\begin{equation}\label{nota:translation}
\tt _a\psi(x) :=\psi (x-a)\,,\quad x\in\O\,.
\end{equation}
We recall that $\tt_a$ is an isometry in every $L^p$ space, $p\in [1,\infty].$
In addition, we have the following property: 
for every $p$ in $[1,\infty),$ and every $f\in L^p,$ 
\begin{equation}
\label{prop:translation_1}
\|\tt_{a}f-f\|_{L^p}\to0 \quad \text{as}\quad a\to0\,;
\end{equation}
(it suffices to check this for $f$ in $C^\infty$ and then to argue by density).
\end{notaAdd}
\begin{proof}[Proof of Lemma \ref{lem:small_pi}]
Fix $(s,t)$ in $\Delta .$
Due to Remark \ref{rem:caracterization},
for $j=1,2$ there exists $(f^j_\gamma )_{|\gamma |\leq 1}$ in $(L^2)^{N_1}$ such that for every $\phi \in W^{1,2}(\O):$
\begin{equation}
\label{representation_pi}
\phantom{\rangle}_{W^{-1,2}(\O)}\big\langle\delta \lambda _{st}^j, \phi \big\rangle_{W^{1,2}(\O)}
=\sum_{|\gamma |\leq 1}\ps{\Lambda ^j_\gamma }{\D^\gamma \phi }_{L^2(\O)}\,.
\end{equation}
Then, for $\Phi\in\F_1$ we have by definition
\begin{equation}
\label{computations_pi}
\begin{aligned}
\langle\pi_{st},\Phi \rangle &=
\int_{\O}u^1(x)~_{W^{-1,2}}\big\langle\delta \lambda ^2, \Phi (x,\cdot )\big\rangle_{\!W^{1,2}}
\d x
+\int_{\O}u^2(y)
~_{W^{-1,2}}
\big\langle\delta \lambda ^1, \Phi (\cdot ,y) \big\rangle
_{W^{1,2}}
\d y
\\
&=\sum_{|\gamma |\leq 1}\int_\O u^1(x)\ps{\Lambda _\gamma ^2}{\D_y^\gamma  \Phi(x,\cdot )}_{L^2_y(\O)}\d x
+\sum_{|\gamma |\leq 1}\int_\O\ps{\Lambda _\gamma ^1}{\D_x^\gamma  \Phi(\cdot ,y)}_{L^2_x(\O)}u^2(y)\d y 
\\
&\leq C\iint_{\Omega}\Big(|u^1(x)||\Lambda  ^2(y)|+|\Lambda ^1(x)||u^2(y)|\Big)(|\Phi |+|\nabla _{x,y}\Phi |)(x,y)\d x\d y
\\
&=C\iint_{\O\times B_1}\big(|u^1(x_++x_-)||\Lambda ^2(x_+-x_-)|+|\Lambda ^1(x_++x_-)||u^2(x_+-x_-)|\big)
\\
&\quad \quad \quad \quad \quad \quad \quad \quad \quad \quad \quad 
\times(|\Phi |+|\nabla _{x,y}\Phi|)(x_++x_-,x_+-x_-)\d x_-\d x_+
\\
&\leq C\int_{B_1}\Big(|\tt_{-x_-}u^1|_{L^2_{x_+}}|\tt_{x_-}\Lambda ^2|_{L^2_{x_+}} +|\tt_{-x_-}\Lambda ^1|_{L^2_{x_+}}|\tt_{x_-}u^2|_{L^2_{x_+}}\Big)\d x_-\nnn{\Phi }{1}
\\
&=C|B_1|\big(|u^1|_{L^2}|\Lambda ^2|_{L^2}+|\Lambda ^1|_{L^2}|u^2|_{L^2}\big)\nnn{\Phi }{1}
\,,
\end{aligned}
\end{equation}
where in the third line we have made the change of variables $(x_+,x_-)=(\frac{x+y}{2},\frac{x-y}{2}).$
Now, the constant above does not depend on the choice of $\Lambda ^1,\Lambda ^2$ in \eqref{representation_pi}, hence we can take the infimum, which, thanks to \eqref{representation_norms}, yields the first part of the Lemma.

We need to justify the existence and uniqueness of $\Xi $ such that \eqref{riemann_sums_Xi} holds.
Recall that since 
$\lambda ^j\in \AC(I;W^{-1,2})$ and since $W^{-1,2}$ is reflexive,
then $\dot\lambda _r\equiv\lim_{\epsilon \to0} (\lambda ^j_{r+\epsilon }-\lambda ^j_r)/\epsilon \in W^{-1,2}$ exists a.e.\ in $I,$ and we have
\[
\delta \lambda ^j_{st}=\int_s^t\dot\lambda ^j_r\d r
\]
(Bochner sense).
On the other hand, from similar computations as in \eqref{computations_pi} we have
$\int_I\nnn{u_r^1\otimes \dot\lambda_r ^2 + \dot\lambda_r ^1\otimes u^2_r}{-1}\d r<\infty.$ 
Observing that for every $r\in I$ the linear map $f_r:=\big(h\in W^{-1,2}\mapsto u_r^1\otimes h+h\otimes u_r^2 \in \F_{-1}\big)$ is continuous with norm not exceeding $|u^1_r|_{L^2}+|u^2_r|_{L^2},$ we can then apply \eqref{bochner_integral}, so that for every $\pp_n\in \PP([0,t]),$ $|\pp_n|\to0:$
\[
\sum_{(\pp_n)} \pi_{t_i t_{i+1}}\to \Xi_{t}\equiv \int_0^t (-u^1_r\otimes \dot\lambda _r ^2 - \dot\lambda ^1_r\otimes u^2_r )\d r\quad \text{strongly in }\F_{-1}\,.
\qedhere
\]
\end{proof}
The main result of this section is the following.
\begin{proposition}
\label{pro:U_natural}
\begin{enumerate}[label=(\alph*)]
\item \label{a}
There exists $\Pi\in \V_ 1^1(I;\F_{-1})$ such that
for every $(s,t)\in\Delta,$ 
 \begin{equation}
 \label{Pi_vs_pi}
\nnn{\delta \Pi_{st} -\pi_{st}}{-2} \leq \omega (s,t)^a 
\end{equation}
for some control $\omega $ and some $a >1.$
If in addition $\lambda ^1,\lambda ^2$ belong to $\AC(I;W^{-1,2}),$ then $\Pi$ is unique and we have $\Pi=\Xi ,$ where $\Xi $ is as  in \eqref{riemann_sums_Xi}.

\item \label{b}
the tensor product $\u\equiv u^1\otimes u^2$ is a weak solution of the rough PDE
\begin{equation}\label{eq:U}
\d \u=\d \Pi +\d \X \u\,,
\end{equation}
on the scale $(\F_k)_{k\in\N},$ in the sense of Definition \ref{def:weak_sol}.
\end{enumerate}
\end{proposition}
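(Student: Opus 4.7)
My approach is to treat the two parts in order: for \ref{a} I will apply the Sewing Lemma to the almost additive $2$-index map $\pi$, and for \ref{b} I will derive \eqref{eq:U} by an algebraic expansion of $\delta\u_{st}$ combined with substitution of the equations for $u^1$ and $u^2$.

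For \ref{a}, the starting observation is the identity
\[
\delta\pi_{s\theta t} \;=\; -\,\delta u^1_{s\theta}\otimes \delta\lambda^2_{\theta t}\;-\;\delta\lambda^1_{\theta t}\otimes \delta u^2_{s\theta},
\]
obtained from the definition \eqref{nota:Pi} together with the additivity of $\delta\lambda^j$. The key ingredient is the tensor bound $\nnn{v^1\otimes v^2}{-(k_1+k_2)}\le C\,\nnnn{v^1}{-k_1,(2)}\nnnn{v^2}{-k_2,(2)}$, which I would prove as in Lemma \ref{lem:small_pi}: pick optimal $L^2$-representatives for $v^1,v^2$ (Remark \ref{rem:caracterization}), perform the change of variables $(x_+,x_-)=(\tfrac{x+y}{2},\tfrac{x-y}{2})$, and exploit the bounded $x_-$-range forced by the support condition in $\Omega$ together with the isometry property of $\tt_a$ in $L^2$. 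Combining this with \eqref{bounds:gubinelli2} to bound $\nnnn{\delta u^j_{s\theta}}{-1}$ and with the hypothesis $\lambda^j\in\V^1_1(I;W^{-1,2})$ to bound $\nnnn{\delta\lambda^j_{\theta t}}{-1}$ yields $\nnn{\delta\pi_{s\theta t}}{-2}\le\omega(s,t)^{1+\alpha}$ for a suitable control $\omega$ with $1+\alpha>1$, so the Sewing Lemma (Proposition \ref{pro:sewing}) produces $\Pi=\Lambda\delta\pi$ with the required estimate \eqref{Pi_vs_pi}. Under the extra assumption $\lambda^j\in\AC(I;W^{-1,2})$, any such $\Pi$ is the limit of $\sum_{(\pp_n)}\pi_{t_it_{i+1}}$ in $\F_{-2}$ (by the estimate \eqref{Pi_vs_pi} as $|\pp_n|\to0$), while Lemma \ref{lem:small_pi} provides convergence of the same Riemann sums to $\Xi$ in $\F_{-1}$; continuity of the embedding $\F_{-1}\hookrightarrow\F_{-2}$ forces $\Pi=\Xi$.

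For \ref{b}, I will expand $\delta\u_{st}=u^1_s\otimes\delta u^2_{st}+\delta u^1_{st}\otimes u^2_s+\delta u^1_{st}\otimes\delta u^2_{st}$ and substitute $\delta u^j_{st}=\delta\lambda^j_{st}+B^j_{st}u^j_s+\BB^j_{st}u^j_s+u^{j,\natural}_{st}$ (together with $\delta u^j=B^ju^j_s+u^{j,\sharp}$ in the quadratic cross-term), then regroup. The $\delta\lambda^j\otimes(\cdot)_s$ contributions assemble into $\pi_{st}$; the $B^j$- and $\BB^j$-contributions, together with the $B^1u^1_s\otimes B^2u^2_s$ piece arising from the quadratic term, combine via \eqref{nota:Gamma_gene} into $X_{st}\u_s+\XX_{st}\u_s$. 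Replacing $\pi_{st}$ by $\delta\Pi_{st}$ through part \ref{a} yields the explicit formula
\[
\u^\natural_{st}\;=\;(\delta\Pi_{st}-\pi_{st})+u^{1,\natural}_{st}\otimes u^2_s+u^1_s\otimes u^{2,\natural}_{st}+B^1_{st}u^1_s\otimes u^{2,\sharp}_{st}+u^{1,\sharp}_{st}\otimes B^2_{st}u^2_s+u^{1,\sharp}_{st}\otimes u^{2,\sharp}_{st},
\]
and it remains to show $\u^\natural\in\V^{1+}_{2,\mathrm{loc}}(I;\F_{-3})$. Applying the tensor estimate above to each term with the splitting $3=k_1+k_2$ dictated by the available regularity of the factors (Proposition \ref{pro:apriori}, estimates \eqref{bounds:gubinelli}, \eqref{bounds:gubinelli3}, \eqref{estimate_remainder}), every remainder is controlled by a product of powers of $\omega_{\lambda^j}$ and $\omega_{B^j}$ whose total exponent exceeds $1$: for instance $\nnn{u^{1,\sharp}\otimes u^{2,\sharp}}{-3}$ is estimated by splitting $3=1+2$ to get $\omega^\alpha\cdot\omega^{2\alpha}=\omega^{3\alpha}$, and $\nnn{u^{1,\natural}\otimes u^2_s}{-3}$ is estimated by pairing the $W^{-3,2}$-bound of order $\omega^{3\alpha}$ with $u^2_s\in L^2$.

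The main obstacle is the careful bookkeeping ensuring, for each term in $\u^\natural$, that some choice of splitting $3=k_1+k_2$ simultaneously respects the available regularity in $W^{-k_j,2}$ and yields a joint exponent strictly above $1$: typically one factor borrows the $W^{-2,2}$-bound of order $\omega^{2\alpha}$ while the other remains at the $W^{-1,2}$-level of order $\omega^\alpha$, so that $3\alpha>1$ is precisely the condition secured by the hypothesis $\alpha>1/3$. A secondary technical point is the promotion of the elementary tensor bound to the $\Omega$-localized scale $(\F_k)$, which crucially uses the bounded $x_-$-range in $\Omega$ to avoid any non-integrability in the $x_+$-direction.
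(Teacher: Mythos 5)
Your overall route coincides with the paper's: the identity $\delta\pi_{s\theta t}=-\delta u^1_{s\theta}\otimes\delta\lambda^2_{\theta t}-\delta\lambda^1_{\theta t}\otimes\delta u^2_{s\theta}$, the $L^2$-representation of negative Sobolev norms plus the change of variables $(x_+,x_-)$ to get the localized tensor bound, the Sewing Lemma for the drift, and the expansion of $\delta\u_{st}$ into $\natural$-, $\sharp$- and driver-terms with mixed splittings $3=k_1+k_2$ is exactly how the paper proves part \ref{b} (your finer splitting $u^{1,\sharp}\otimes B^2u^2_s+u^{1,\sharp}\otimes u^{2,\sharp}$ of the paper's $u^{1,\sharp}\otimes\delta u^2$ is equivalent, and the sign you give to $\delta\Pi_{st}-\pi_{st}$ is off relative to \eqref{id:U_natural} but harmless for the estimates).

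There is, however, one genuine gap in part \ref{a}: the statement asserts $\Pi\in\V^1_1(I;\F_{-1})$, and Definition \ref{def:weak_sol} (hence part \ref{b}) needs the drift to be of finite variation \emph{in $\F_{-1}$}, but your construction only produces it in $\F_{-2}$. Since the hypothesis of Proposition \ref{pro:sewing} is the bound $\nnn{\delta\pi_{s\theta t}}{-2}\leq\omega(s,t)^{1+\alpha}$, the sewing/rough-integral limit of the Riemann sums $\sum_{(\pp)}\pi_{t_it_{i+1}}$ converges only in $\F_{-2}$, and one cannot upgrade by density because $\F_2$ is \emph{not} dense in $\F_1$ (the paper flags precisely this point). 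The paper closes it by combining the uniform bound $\nnn{\pi_{st}}{-1}\leq C\big(\n{u^1_s}{0}\n{\delta\lambda^2_{st}}{-1}+\n{\delta\lambda^1_{st}}{-1}\n{u^2_s}{0}\big)$ from Lemma \ref{lem:small_pi}, which keeps the partial sums bounded in $\F_{-1}$ by a control, with a Hahn--Banach extension argument to realize the limit as an element of $\V^1_1(I;\F_{-1})$ satisfying \eqref{Pi_vs_pi}; under the additional assumption $\lambda^j\in\AC(I;W^{-1,2})$ the strong $\F_{-1}$-convergence \eqref{riemann_sums_Xi} identifies it with $\Xi$, as you do. Your argument covers only this absolutely continuous case; for general $\lambda^j\in\V^1_1(I;W^{-1,2})$ you must add the uniform-$\F_{-1}$-bound-plus-Hahn--Banach step (or an equivalent weak-$*$ compactness argument), otherwise $\Pi$ is not an admissible drift on the scale $(\F_k)$ and part \ref{b} as stated does not follow.
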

\subsection{Proof of Proposition \ref{pro:U_natural}}
\textit{\indent Proof of \ref{a}.}
The first claim  follows by the same arguments as in Lemma \ref{lem:small_pi}, together with an application of the Sewing Lemma (see Appendix \ref{app:A1}).
More precisely, there holds for $(s,\theta ,t)\in\Delta :$
\[
\delta (\pi)_{s\theta t}=-\delta u^1_{s\theta }\otimes \delta \lambda^2 _{\theta t} -\delta \lambda _{\theta t}^1\otimes \delta u^2_{s\theta }\,.
\]
Now, for $j=1,2$ let $\Lambda ^j$ in $(L^2)^{N_1}$ such that \eqref{representation_pi} holds (with $\theta ,t$ instead of $s,t$), and similarly let $(f^j_\beta )_{|\beta |\leq 1}\in (L^2)^{N_1}$ such that for every $\phi $ in $W^{1,2}$
\begin{equation}
\label{representation_delta}
\langle\delta u^j_{s\theta } ,\phi \rangle =\sum_{|\beta  |\leq 1}\ps{f^j_\beta  }{\D^\beta \phi }_{L^2}\,.
\end{equation}
Let $\Phi\in\F_2$. Then we see that
\[
\begin{aligned}
\langle&\delta \pi_{s\theta t},\Phi \rangle  =
\\
&-\sum_{|\gamma |,|\beta |\leq 1}\PS{f^1_\beta  }{\PS{\Lambda ^2_{\gamma }}{\D ^{\beta  }_x\D ^\gamma _y \Phi  }_{L^2_y}}_{L^2_x}
-\sum_{|\gamma |,|\beta |\leq 1}\PS{\Lambda ^1_\gamma }{\PS{f^2_{\beta }}{\D_x^{\gamma }\D_y^{\beta } \Phi }_{L^2_y}}_{L^2_x}
\\
&\leq
C\iint_{\Omega}\big(|f^1(x)||\Lambda ^2(y)|+|\Lambda ^1(x)||f^2(y)| \big)
\big(|\Phi |+|\nabla _{x,y}\Phi |+|\nabla _{x,y}^2\Phi |\big)(x,y)\d x\d y\,.
\end{aligned}
\]
Proceeding as as before with the change of variables $(x_+,x_-)=(\frac{x+y}{2},\frac{x-y}{2}),$ taking the infimum over $\Lambda ^1,\Lambda ^2,f^1, f^2$ such that \eqref{representation_pi}, \eqref{representation_delta} hold, and then using \eqref{representation_norms},
we obtain that
\[
\nnn{\delta \pi_{s\theta t}}{-2}
\leq C
\left(\n{\delta u^1_{s\theta }}{-1}\n{\delta \lambda ^2_{st}}{-1}+\n{\delta \lambda ^1_{\theta t}}{-2} \n{\delta u^2_{s\theta }}{-1} \right)\,,\quad \forall(s,\theta ,t)\in\Delta ^2\,.
\]
for some universal constant $C>0.$
Hence, for every $(s,\theta ,t)$ in $\Delta ^2:$
\begin{equation}
\label{delta_pi}
\nnn{\delta \pi_{s\theta t}}{-2}
\leq C\Big(\omega _{\lambda ^1}(s,t)\omega _{\delta u^2}(s,t)^{\alpha }+\omega _{\delta u^1}(s,t)^\alpha \omega_{\lambda ^2}(s,t)\Big)
\end{equation}
where for $j=1,2$ and $(s,t)\in\Delta,$ we let
$\omega _{\delta u^j}(s,t):=|\delta u^j|_{1/\alpha \mathrm{-var},W{-1,2};[s,t]}^{1/\alpha }$
(see Remark \ref{rem:control}).
Consequently, the r.h.s.\ of \eqref{delta_pi} fulfills the hypotheses of the Sewing Lemma, i.e.\ $\delta \pi\in\Zc^{1+}_3(I;\F_{-2}).$
Hence by Corollary \ref{cor_def}, there is a unique $\Pi^{\dagger}$ in $\V_ 1^{1}(I;\F_{-2})$
such that $(\pi-\delta\Pi^{\dagger})\in \V_ 2^{1+}(I;\mathcal{F}_{-2}).$
It is given by the rough integral
\begin{equation}
\label{definition_Pi}
\Pi^{\dagger}_{t}=
\I_{0t}(\pi )\equiv 
(\F_{-2})\text{ --}\!\!\!\!\lim_{\substack{|\pp|\to0\\ \pp\in\PP([0,t])}}\sum_{(\pp)}\pi_{t_i t_{i+1}}\,.
\end{equation}

We need to justify that $\Pi^{\dagger}$ can be extended in a unique way to an element $\Pi$ in $\V^1_1(I;\F_{-1}),$ which is not trivial since \emph{$\F_2$ is not dense in $\F_1.$}
However, letting $|\pp_n|\to0$ and $\I_n\pi$ be the partial sum associated to $\pp_n$ in the r.h.s.\ of \eqref{definition_Pi}, we have that $\limsup_{n} \nnn{\I_n\pi}{-1}\leq\omega _\pi(s,t)<\infty,$ where $\omega _\pi$ is any control such that $\omega _{\pi}\geq \nnn{\pi}{-1}.$
Hence by the Hahn-Banach Theorem, 
there exists such an extension $\Pi.$
Finally, by Lemma \ref{lem:small_pi}, we have $\I_n\pi\to\Xi $ in $\F_{-1},$ yielding that $\Pi=\Xi .$
This proves part \ref{a}.\hfill\qed

\bigskip
\textit{\indent Proof of \ref{b}.}
Define $\Pi:= \I_{0\cdot }(\pi)$ as above.
We have to show that the distribution-valued $2$-index map  $\u^\natural$ defined for each $(s,t)\in\Delta$ as
\begin{equation}\label{tensorized_eq}
\u^\natural_{st}:=\delta \u_{st}-\delta \Pi_{st}-X_{st}\u_s - \XX_{st}\u_s \,,
\end{equation}
belongs to $\V_ {2,\rm{loc}}^{1+}(I;\F_{-3}).$

A straightforward, but very useful observation is the following.
\begin{claim}
For $(s,t)\in\Delta$ and  $j =1,2$, we define the corresponding first order remainder
\begin{equation}\label{nota:u_sharp}
u^{j,\sharp}_{st}:=\delta u^j _{st}-B^j _{st}u^j _s\,.
\end{equation}
Then we have the identity
\begin{equation}\label{id:U_natural}
\begin{aligned}
\u^{\natural}_{st}=u^{1,\natural}_{st}\otimes u_s^2 +u^1_s\otimes u^{2,\natural}_{st}
+ \pi_{st}-\delta \Pi_{st}
+u^{1,\sharp}_{st}\otimes\delta u^2_{st}
+B^1_{st}u_s^1\otimes u_{st}^{2,\sharp}\,.
\end{aligned}
\end{equation}
\end{claim}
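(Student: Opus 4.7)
The plan is to prove \eqref{id:U_natural} by a direct algebraic expansion of $\delta\u_{st}$. The key observation is that the identity is purely formal: it says no more than that the different ways of decomposing the increment $\delta(u^1\otimes u^2)$ into ``driver part + remainder'' are compatible with each other. The proof is bookkeeping, with no analytic input beyond the two equations satisfied by $u^1$ and $u^2$. I do not expect any real obstacle; the only care required is to keep track of which cross-terms collapse under the definition \eqref{nota:Gamma_gene} of $\X$.

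First, I would start from the basic algebraic identity
\begin{equation*}
\delta\u_{st} \;=\; u^1_s\otimes\delta u^2_{st} + \delta u^1_{st}\otimes u^2_s + \delta u^1_{st}\otimes\delta u^2_{st}\,,
\end{equation*}
which follows from $u^1_t\otimes u^2_t - u^1_s\otimes u^2_s = (u^1_s+\delta u^1_{st})\otimes(u^2_s+\delta u^2_{st}) - u^1_s\otimes u^2_s$. Next, for the first two terms I would use the decomposition $\delta u^j_{st}=B^j_{st}u^j_s+u^{j,\sharp}_{st}$ coming from \eqref{nota:u_sharp}, which yields
\begin{equation*}
u^1_s\otimes\delta u^2_{st} + \delta u^1_{st}\otimes u^2_s \;=\; X_{st}\u_s \,+\, u^{1,\sharp}_{st}\otimes u^2_s + u^1_s\otimes u^{2,\sharp}_{st}\,,
\end{equation*}
where I invoked the definition \eqref{nota:Gamma_gene} of $X_{st}=B^1_{st}\otimes\id+\id\otimes B^2_{st}$.

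For the cross-term, I would expand $\delta u^1_{st}$ in the same way and keep $\delta u^2_{st}$ intact on one piece, splitting the other:
\begin{equation*}
\delta u^1_{st}\otimes\delta u^2_{st} \;=\; B^1_{st}u^1_s\otimes B^2_{st}u^2_s \,+\, B^1_{st}u^1_s\otimes u^{2,\sharp}_{st} \,+\, u^{1,\sharp}_{st}\otimes \delta u^2_{st}\,.
\end{equation*}
The first term on the right is exactly the ``$B^1\otimes B^2$'' component of $\XX_{st}\u_s$ in \eqref{nota:Gamma_gene}. The remaining two components of $\XX_{st}\u_s$, namely $\BB^1_{st}u^1_s\otimes u^2_s$ and $u^1_s\otimes\BB^2_{st}u^2_s$, are then absorbed by replacing $u^{j,\sharp}_{st}$ in the two terms $u^{1,\sharp}_{st}\otimes u^2_s$ and $u^1_s\otimes u^{2,\sharp}_{st}$ using the equation \eqref{nota:solution} for each $u^j$, which in the form given in \eqref{g_sharp_equal} reads $u^{j,\sharp}_{st}=\delta\lambda^j_{st}+\BB^j_{st}u^j_s+u^{j,\natural}_{st}$.

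Collecting all pieces, the $\BB^j$-contributions cancel with those inside $\XX_{st}\u_s$, the $\delta\lambda^j$-contributions combine into $\pi_{st}$ by the very definition \eqref{nota:Pi}, and what remains is exactly
\begin{equation*}
\delta\u_{st}-X_{st}\u_s-\XX_{st}\u_s \;=\; \pi_{st}+u^{1,\natural}_{st}\otimes u^2_s + u^1_s\otimes u^{2,\natural}_{st}+u^{1,\sharp}_{st}\otimes\delta u^2_{st}+B^1_{st}u^1_s\otimes u^{2,\sharp}_{st}\,.
\end{equation*}
Subtracting $\delta\Pi_{st}$ from both sides and using \eqref{tensorized_eq} yields the claimed identity \eqref{id:U_natural}. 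The whole computation is formal and takes place in $\F_{-3}$, so no distributional subtlety arises at this stage; analytic estimates on the five remainder terms are postponed to the subsequent application of Proposition \ref{pro:apriori}.
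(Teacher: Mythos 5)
Your proof is correct and follows essentially the same route as the paper: a direct algebraic expansion of $\delta(u^1\otimes u^2)$, identification of $X_{st}\u_s$ and $\XX_{st}\u_s$ via the definition \eqref{nota:Gamma_gene}, substitution of the equations for $u^1,u^2$ (in the form $u^{j,\sharp}_{st}=\delta\lambda^j_{st}+\BB^j_{st}u^j_s+u^{j,\natural}_{st}$), and collection of the drift contributions into $\pi_{st}$. The paper merely arranges the same add-and-subtract bookkeeping in a single display rather than in your three-step expansion, so no further comment is needed.
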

\indent\textit{Proof of Claim.}
First observe that adding and subtracting, we have
\[
\delta \u_{st}
=\delta u^1_{st}\otimes u^2_s +u^1_s\otimes\delta u^2_{st} +\delta u^1_{st}\otimes\delta u^2_{st}\,,
\]
which, omitting time indexes, is equal to :
\begin{multline*}
(\delta u^1-B^1u^1-\BB^1 u^1)\otimes u^2 + u^1\otimes(\delta u^2-B^2u^2-\BB^2 u^2)
\\
+X\u +\XX \u - B^1u^1\otimes B^2u^2 +\delta u^1\otimes\delta u^2
\\
\equiv(\delta u^1-B^1u^1-\BB^1 u^1)\otimes u^2 + u^1\otimes(\delta u^2-B^2u^2-\BB^2 u^2)
+X\u+\XX \u
\\
+(\delta u^1-B^1u^1)\otimes\delta u^2+ B^1u^1\otimes (\delta u^2-B^2u^2)
\,.
\end{multline*}
Similarly, adding and subtracting the drift term and using \eqref{tensorized_eq}, we obtain that:
\[
\begin{aligned}
\u^\natural_{st}&=(\delta u^1_{st}-B^1_{st}u^1_s-\BB^1_{st}u^1_s - \lambda^1_{st})\otimes u^2_s
+u^1_s\otimes(\delta u^2_{st}-B^2_{st}u^2_s-\BB_{st}^2u^2_s - \lambda^2_{st})
\\
&\quad \quad 
+\pi_{st}-\delta \Pi_{st}
+ (\delta u^1_{st}-B^1_{st}u^1_s)\otimes\delta u^2_{st}
+ B^1_{st}u^1_s\otimes(\delta u^2_{st}-B^2_{st}u^2_s)
\,.
\end{aligned}
\]
hence the claim is proved.\hfill \qed

\bigskip
\textit{\indent End of the Proof of Proposition \ref{pro:U_natural}.}
Take any $\Phi $ in $\F_3.$
From the identity \eqref{id:U_natural}, we can decompose $\langle\u^\natural,\Phi  \rangle$ into
\[\begin{aligned}
\langle\u^{\natural}_{st},\Phi \rangle
&=\left\langle u^{1,\natural}_{st}\otimes u_s^2 +u^1_s\otimes u^{2,\natural}_{st},\Phi \right\rangle
+\langle\pi_{st}-\delta \Pi_{st},\Phi  \rangle
\\
&\hspace{12em}+\left\langle u^{1,\sharp}_{st}\otimes\delta u^2_{st},\Phi \right\rangle
+\left\langle B^1_{st}u_s^1\otimes u_{st}^{2,\sharp},\Phi \right\rangle
\\
&=:\T_{\natural}+\T_\lambda +\T_{\sharp}^1 +\T_{\sharp}^2\,.
\end{aligned}
\]
In the above formula, it is immediately seen, according to Remark \ref{rem:gubinelli}, that each term above has the needed size in time, namely $\langle\u^\natural,\Phi\rangle$ belongs to the space $\V^{1+}_{2,\rm{loc}}(I;\R)$. That being said, it is necessary to evaluate $\u^\natural$ as a path with values in $\F_{-3},$ and not in $\mathscr D'(\O)$ only.
For that purpose, we use the characterization of Sobolev Spaces of negative order given by Remark \ref{rem:caracterization}.
Fix $(s,t)$ in $\Delta $ and 
for $j=1,2$ let $g^j,h^j\in (L^2)^{N_2}$ and $h^j\in (L^2)^{N_3}$ be such that 
\begin{equation}
\label{representation_sharp}
\text{for every}\enskip \phi \in W^{2,2}\,,\quad 
\langle u^{j,\sharp}_{st}, \phi  \rangle = \sum\nolimits_{|\gamma |\leq 2}\big(g_\gamma ^j,\D^\gamma \phi\big)_{L^2}
\end{equation}
\begin{equation}
\label{representation_natural}
\text{for every}\enskip \phi \in W^{3,2}\,,\quad 
\langle u^{j,\natural}_{st}, \phi  \rangle = \sum\nolimits_{|\beta |\leq 3}\big(h_\beta ^j,\D^\beta \phi\big)_{L^2}\,,
\end{equation}
and let $f^j$ be as in \eqref{representation_delta}.

For the first term, we have by definition:
\[
\begin{aligned}
\T_\natural=&
\big\langle u^2,\langle u^{1,\natural}, \Phi\rangle_{x}\big\rangle_{y}
+\big\langle u^1,\langle u^{2,\natural}, \Phi\rangle_{y}\big\rangle_{x}
\\
=& \sum\nolimits_{|\beta |\leq 3}\PS{ u^2}{\ps{h^1_\beta }{\D^\beta_x  \Phi}_{L^2_x} }_{L^2_y}
+\sum\nolimits_{|\beta |\leq 3}\PS{ u^1}{\ps{h^2_\beta }{\D^\beta _y \Phi}_{L^2_y}}_{L^2_x}\,.
\end{aligned}
\]
Changing variables
as before, there comes
\[
\begin{aligned}
\T_\natural
&\leq C\iint_{B_1\times\O}(|u^2(x_+-x_-)||h^1(x_++x_-)| +|u^1(x_++x_-)||h^2(x_+-x_-)|)
\\
&\quad \quad \quad \quad \quad 
\times(|\Phi | + |\nabla \Phi| + |\nabla ^2\Phi | +|\nabla ^3\Phi |)(x_++x_-,x_+-x_-)\d x_+\d x_-
\\
&\leq C\left(|u ^2_s|_{L^2}|h^1|_{L^2} +|u^1_s|_{L^2}|h^2|_{L^2}\right)\nnn{\Phi }{3}\,,
\end{aligned}
\]
where again we have used Fubini's theorem, together with the fact that the translations $\tt_{x_-},\tt_{-x_-}$ are isometries in $L^2.$ Hence, taking the infimum over the choice of $h^1,h^2$ in \eqref{representation_natural}, it holds true that for every $(s,t)\in \Delta ,$
\begin{equation}
\label{T1}
\T_\natural \leq C\left(\n{u ^2_s}{0}\n{u^{2,\natural}_{st}}{-3} +\n{u^1_s}{0}\n{u^{2,\natural}_{st}}{-3}\right)\nnn{\Phi }{3}\,,
\end{equation}
for some constant $C>0,$ independent of $(s,t)$ in $\Delta $ and $\Phi $ in $\F_3.$

For the third term, we have
\[
\begin{aligned}
\T_{\sharp}^1=&
\Big\langle u^{1,\sharp}, \langle\delta u^2,\Phi \rangle_y\Big\rangle_x
\\
=&
\sum_{|\gamma |\leq 2,|\beta |\leq 1}\PS{g^1_\gamma }{\ps{f^2_\beta }{\D_x^\gamma \D_y^\beta \Phi}_{L^2_y}}_{L^2_x}
\leq &
C|g^1|_{L^2}|f^2|_{L^2}\nnn{\Phi }{3}\,.
\end{aligned}
\]
Hence, taking the infimum over $g^1,f^2$ gives
\begin{equation}\label{T2}
\begin{aligned}
\langle\T_{\sharp}^1, \Phi \rangle
&\leq C\n{\delta u^2_{st}}{-1}\n{u_{st}^{1,\sharp}}{-2}\nnn{\Phi }{3}\,,
\end{aligned}
\end{equation}
for a constant depending neither on $(s,t)\in\Delta ,$ neither on $\Phi $ in $\F_3.$

Proceeding similarly for the fourth term, there holds:
\[
\T_{\sharp}^2=
\Big\langle B^1u^1,\langle u^{2,\sharp},\Phi \rangle_y\Big\rangle_x
=\sum\nolimits_{|\gamma |\leq 2}\PS{u^1}{\ps{g^2_\gamma }{B^{1,*}_x\D_y^\gamma \Phi }_{L^2_y}}_{L^2_x}
\]
Hence, we have
\begin{equation}
\label{T3}
\T_{\sharp}^2\leq C \omega _{B^1}(s,t)^\alpha \n{u^1_s}{0}\n{u^{2,\sharp}_{st}}{-2}\nnn{\Phi }{3}\,.
\end{equation}
for some universal constant $C>0.$

Now,
note that the drift term has been already estimated in Lemma \ref{lem:small_pi}, namely, we have 
\begin{equation}
\label{T4}
\begin{aligned}
\T_\lambda =&
\langle (\Lambda \delta \pi)_{st},\Phi  \rangle
\\
\leq & C\left(\omega _{\lambda ^1}(s,t)\omega _{\delta u^2}(s,t)^{\alpha }+\omega _{\delta u^1}(s,t)^\alpha \omega_{\lambda ^2}(s,t)\right)\nnn{\Phi }{2}\,.
\end{aligned}
\end{equation}

The conclusion follows by \eqref{T1}-\eqref{T2}-\eqref{T3}-\eqref{T4}.
Indeed, for $j=1,2,$ denote as before by $\omega _{\delta u^j}(s,t):=|\delta u^j|_{1/\alpha \mathrm{-var},W^{-1,2};[s,t]}^{1/\alpha },$ and furthermore define
$\omega _{j,\sharp}(s,t):=|u^{j,\sharp}|^{1/(2\alpha )}_{1/(2\alpha )\mathrm{-var},W^{-2,2};[s,t]},$
and
$\omega _{j,\natural}(s,t):=|u^{j,\natural}|_{1/(3\alpha )\mathrm{-var},W^{-3,2};[s,t]}$
(see Remark \ref{rem:control}).
Then, we see that:
\begin{multline*}
\nnn{\u^\natural_{st}}{-3}\leq C\left(\alpha ,\|u^1\|_{\infty,2},\|u^2\|_{\infty,2}\right) 
\Big(
(\omega _{1,\natural})^{3\alpha }+(\omega _{2,\natural})^{3\alpha }+ (\omega_{\delta u^2})^\alpha (\omega _{1,\sharp})^{2\alpha }  + (\omega _{B^1})^\alpha(\omega _{2,\sharp})^{2\alpha }
\\
+\omega _{\lambda ^1}(\omega _{\delta u^2})^{\alpha }+(\omega _{\delta u^1})^\alpha \omega_{\lambda ^2}
\Big),
\end{multline*}
where all the controls are evaluated at $(s,t)$.
Since each term on the above right hand side is of homogeneity at least $3\alpha ,$ we see that
\[
\u^\natural\in \V_ {2,\rm{loc}}^{3\alpha}(I;\F_{-3})\subset \V^{1+}_{2,\rm{loc}}(I;\F_{-3})\,,
\]
which completes the proof of Proposition \ref{pro:U_natural}.
\hfill\qedsymbol
\begin{remark}
\label{rem:Pi}
Assume that for $j=1,2$ and $t\in I:$
\[
\lambda ^j_{t}:= \int_0^tA^ju^j\d r
\]
where we are given $u^j$ in $\Bc$ and
\[
A^j(t,x):=\partial _\alpha  (a^{j,\alpha \beta }(t,x)\partial _\beta  \cdot ) + b^{j,\alpha } (t,x)\partial _\alpha  +c^j(t,x)\,,
\]
with coefficents $a^j,b^j,c^j$ such that Assumption \ref{ass:a} and Assumption \ref{ass:b_c} hold. 
Using the estimate \eqref{borne_controle_lambda}, we see that $\lambda ^j$ belongs to $\AC(I;\F_{-1}).$ Moreover, making use of the notations of Proposition \ref{pro:U_natural}, we have for every $t\in I:$
\[
\Pi_{t}:=\int_0^t( u_r^1\otimes A^2_ru^2_r+A_r^1u_r^1\otimes u^2_r)\d r\,.
\]
in the Bochner sense, in $\F_{-1}.$
\end{remark}
\section{Uniqueness}
\label{sec:uniqueness}

After the preliminary step of tensorization presented in Section \ref{sec:tensorization} we proceed with the proof of uniqueness. The ultimate goal is to test the tensor equation for $u(x)u(y)$ by a Dirac mass $\delta_{x=y}$ which finally gives the desired equation for $u^2$. To achieve this, we first consider a smooth approximation to the identity $\psi_\epsilon$ which is a legal test function for \eqref{eq:U}. The core of the proof  then consists in the  justification of the passage to the limit as $\epsilon\to0$. More precisely, it is necessary to bound all the terms in the equation uniformly in $\epsilon\in (0,1)$. Similarly to the a priori estimates in Section \ref{sec:energy}, the main challenge is to bound the remainder term. Our approach relies on a suitable blow-up transformation together with uniform bounds for all the other terms in the equation which permits to employ again Proposition \ref{pro:apriori} and yields an estimate uniform in $\epsilon$.

Consider $u\in\Bc$, a weak solution to \eqref{zakai} in the sense of Definition \ref{def:weak_sol} and define
\begin{equation}
\label{nota:U_gamma}
\u(x,y):=u(x)u(y)\,,\quad \quad 
\text{for every}\enskip (x,y)\enskip \text{in}\enskip \OO\,.
\end{equation}
Denote by $\S \equiv(S,\SS)$ the symmetric driver, given for every $(s,t)\in\Delta ,$ by 
\begin{equation}\label{nota:Gamma}
\left[\begin{aligned}
&S_{st}:=B_{st}\otimes \id +\id\otimes B_{st},
\\
&\SS_{st}:=\BB_{st}\otimes \id+\id\otimes \,\BB_{st} +B_{st}\otimes B_{st}\,,
\end{aligned}\right.
\end{equation}
and also by 
\[
\Pi_{t}:= \int_0^t\Big(A_ru_r\otimes u_r +u_r\otimes A_ru_r\Big)\d r\,.
\]
Fix $\epsilon >0.$ Then replacing $\Omega $ by 
\begin{equation}
\label{Omega_epsilon}
\Omega _\epsilon :=\left\{(x,y)\in\R^d\times\R^d:\frac{|x-y|}{2}\leq \epsilon \right\}
\end{equation} 
in Section \ref{sec:tensorization}, then Proposition \ref{pro:U_natural} and Remark \ref{rem:Pi} yield that
\begin{equation}
\label{equation:u_tensorized}
\d \u=\d\Pi + \d\S \u\,,
\end{equation}
holds with respect to the scale $(\F_k(\Omega _\epsilon ))_{k\in\N},$ in the sense of Definition \ref{def:weak_sol}.

We now define the blow-up transformation
$T_\epsilon :\F_0(\Omega )\to \F_0(\Omega_\epsilon )$ as follows:
given $\Phi \in \F_0(\Omega ),$ we let
\begin{equation}
\label{nota:T_epsilon}
T_\epsilon \Phi(x,y):=(2\epsilon )^{-d}\Phi\left (\frac{x+y}{2} +\frac{x-y}{2\epsilon },\frac{x+y}{2} -\frac{x-y}{2\epsilon }\right)\,,
\quad 
\text{for any}\quad (x,y)\in\Omega_\epsilon \,.
\end{equation}
This operation is invertible and we have for $(x,y)\in \Omega :$
\begin{equation}
\label{T_epsilon_inverse}
T_\epsilon ^{-1}\Phi(x,y)=(2\epsilon)^d \Phi\left(\frac{x+y}{2}+\epsilon\frac{x-y}{2},\frac{x+y}{2}-\epsilon \frac{x-y}{2}\right) \,.
\end{equation}

Given $k\in\{0,1,2,3\}$ and $v$ in $\F_{-k}(\Omega _\epsilon ),$
we can define a distribution $T_\epsilon^*v\in\F_{-k}(\Omega )$ by duality, and similary $T_{\epsilon }^{-1,*}v$ makes sense as an element of $ \in \F_{-k}(\Omega _\epsilon ).$

For any $\Psi \in\F_3(\Omega ),$
we can test \eqref{equation:u_tensorized} against 
\[
\Phi :=T_\epsilon \Psi \in \F_3(\Omega_\epsilon )\,.
\]
We deduce that for all $\Psi\in\F_{3}(\Omega)$ and $(s,t)\in\Delta :$
\[
\langle  T_\epsilon ^*\delta \u_{st} ,\Psi\rangle
=
\langle T_\epsilon ^*\delta \Pi _{st},\Psi \rangle+ \langle  T_\epsilon ^*(S_{st}+\SS_{st}) \u^\epsilon _s ,\Psi \rangle+\langle   T_\epsilon ^*\u^{\natural}_{st} ,\Psi \rangle\,,
\]
whence letting
$\u^\epsilon :=T_\epsilon ^*\u,$
$\S^\epsilon :=T_\epsilon ^*\S T_\epsilon ^{-1,*},$
$\Pi^\epsilon := T_\epsilon^*\Pi,$
and
$\u^{\natural,\epsilon } :=T_\epsilon ^*\u^{\natural},$
we see that $\u^\epsilon $ is a weak solution of
\begin{equation}
\label{eq:U_epsilon}
\d \u^\epsilon
=
\d \Pi^\epsilon + \d \S^\epsilon \u^\epsilon \,,
\end{equation}
with respect to the scale $(\F_k(\Omega ))_{k\in\N},$ in the sense of Definition \ref{def:weak_sol}.

As the next step, we  establish uniform bounds for the renormalized driver $\mathbf{S}^\epsilon$ as well as for the drift $\Pi^\epsilon$, which in turn implies a uniform bound for the remainder $u^{\natural,\epsilon}$. The proof of uniqueness is then concluded in Subsection \ref{subsec:uniqueness}.

\subsection{Renormalizability of symmetric drivers}

Let us begin with the uniform bound for the  driver $\mathbf{S}^\epsilon$. Following \cite{deya2016priori}, the following definition will be useful.

\begin{definition}[Renormalizable drivers]
We say that a family $\mathbf{S}^\epsilon\equiv (S^\epsilon ,\SS^\epsilon )$, $\epsilon\in(0,1)$, of $1/\alpha$-unbounded rough drivers is \emph{renormalizable}, with respect to a scale $(\G_k),$ if there exists a control $\omega _{S}$ such that the bounds \eqref{bounds:rough_drivers} hold uniformly with respect to $\epsilon \in(0,1),$ namely for all $(s,t)\in\Delta ,$
\begin{align}\label{bounds:renormalizable_driver}
&|S^\epsilon _{st}|_{\L(\G_{-k},\G_{-k-1})}\leq \omega _{S}(s,t)^\alpha\,,\quad \text{for}\enskip k=0,1,2\enskip \text{and} 
\\
\label{bounds:renormalizable_driver2}
&|\SS^\epsilon _{st}|_{\L(\G_{-k},\G_{-k-2})}\leq \omega _{S}(s,t)^{2\alpha}\,,\quad \text{for}\enskip k=0,1\,.
\end{align}
\end{definition}

For every $k$ we henceforth omit to mention the domain $\Omega $ and write $\F_k$ for $\F_k(\Omega )$ (recall \eqref{Omega}). We have the following.

\begin{proposition}\label{pro:renormalizable}
Consider a driver $\S$ as in \eqref{nota:Gamma} and define for each $\epsilon \in(0,1):$
\[
\S^\epsilon \equiv(S^\epsilon ,\SS^\epsilon ):= (T_\epsilon ^{*}S_{st}T_\epsilon ^{-1,*},T_\epsilon ^{*}\SS_{st}T_\epsilon ^{-1,*})\,.
\]
Then, the family $(\S^\epsilon )_{\epsilon\in(0,1)}$, is renormalizable with respect to the scale $(\F_k).$

Moreover, the bounds \eqref{bounds:renormalizable_driver}-\eqref{bounds:renormalizable_driver2} hold with a control of the form
\begin{equation}\label{omega_uniform}
\omega_{S} (s,t):=C\left(|\sigma |_{W^{3,\infty}},|\nu |_{W^{2,\infty}}\right)\omega _Z(s,t)\,,
\end{equation}
where the constant above only depends on the indicated quantities.
\end{proposition}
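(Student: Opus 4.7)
The plan is to work throughout in the symmetric/antisymmetric coordinates $u := (x+y)/2$, $v := (x-y)/2$ on $\R^d \times \R^d$. Denoting by $\hat \Psi(u,v) := \Psi(u+v, u-v)$ the representation of $\Psi \in \F_{k+1}(\Omega)$ in these coordinates (so that $\hat\Psi$ is supported on $|v|\leq 1$), a direct calculation shows that $T_\epsilon \Psi$ reads, in $(u,v)$-coordinates, as $(2\epsilon)^{-d}\hat\Psi(u, v/\epsilon)$, while $T_\epsilon^{-1}$ amounts to evaluating at $v = \epsilon w$ and then multiplying by $(2\epsilon)^d$. Consequently, derivatives of $T_\epsilon\Psi$ in the $v$-direction pick up the singular factor $\epsilon^{-1}$, which is the only potential source of $\epsilon$-divergence in $(S_{st}^{\epsilon})^* = T_\epsilon^{-1} S_{st}^{*} T_\epsilon$ and $(\SS_{st}^{\epsilon})^{*} = T_\epsilon^{-1}\SS_{st}^{*}T_\epsilon$. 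The symmetric/tensor structure of $\S$ will precisely pair every such singular factor with an antisymmetric combination of $\sigma$'s that vanishes on the diagonal $\{v=0\}$.

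For the level-one operator, using $\partial_{x_i} = \tfrac{1}{2}(\partial_{u_i}+\partial_{v_i})$ and $\partial_{y_i} = \tfrac{1}{2}(\partial_{u_i}-\partial_{v_i})$, I first rewrite the sum $B^{*}\otimes \id + \id \otimes B^{*}$ acting on $\hat\Phi(u,v)$ as
\[
Z^k_{st}\Bigl\{-\tfrac{1}{2}\partial_{u_i}\bigl[(\sigma^{ki}(u{+}v){+}\sigma^{ki}(u{-}v))\hat\Phi\bigr]-\tfrac{1}{2}\partial_{v_i}\bigl[(\sigma^{ki}(u{+}v){-}\sigma^{ki}(u{-}v))\hat\Phi\bigr]+(\nu^k(u{+}v){+}\nu^k(u{-}v))\hat\Phi\Bigr\}.
\]
Substituting $\hat\Phi = (2\epsilon)^{-d}\hat\Psi(u,v/\epsilon)$, evaluating at $v = \epsilon w$ and multiplying by $(2\epsilon)^d$, the only potentially divergent contribution is $-\tfrac{1}{2\epsilon}[\sigma^{ki}(u{+}\epsilon w)-\sigma^{ki}(u{-}\epsilon w)](\partial_{w_i}\hat\Psi)(u,w)$, which by Lipschitz continuity of $\sigma$ and $|w|\leq 1$ on the support of $\hat\Psi$ is bounded by $2|\nabla\sigma|_{L^\infty}|\partial_w\hat\Psi|$. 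All other terms are manifestly $O(1)$. For the $W^{k,\infty}$ bounds with $k=1,2$, each additional derivative applied to the singular prefactor $\epsilon^{-1}[\sigma^{ki}(u{+}\epsilon w)-\sigma^{ki}(u{-}\epsilon w)]$ either gives $\nabla\sigma^{ki}(u{+}\epsilon w)+\nabla\sigma^{ki}(u{-}\epsilon w)$ (from $\partial_w$, bounded) or requires the Lipschitz continuity of the corresponding $\nabla^j\sigma$ (from $\partial_u$), so that $k$ derivatives of the output cost $k+1$ derivatives of $\sigma$. Our assumption $\sigma\in W^{3,\infty}$ thus covers $k\leq 2$, and similarly $\nu\in W^{2,\infty}$ is enough for the $\nu$-contribution.

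The analysis of $\SS^\epsilon$ proceeds along the same lines. Expanding $\BB^{*}\otimes\id + \id\otimes\BB^{*} + B^{*}\otimes B^{*}$ in $(u,v)$-coordinates produces second-order differential operators whose coefficients are sums, differences and products of $\sigma(u\pm v)$'s, with prefactors $\ZZ^{k\ell}_{st}$ or $Z^k_{st} Z^\ell_{st}$; again the antisymmetric pieces carry a Lipschitz factor $\sigma(u{+}v)-\sigma(u{-}v) = O(\epsilon)$ that compensates every $\epsilon^{-1}$ arising from $\partial_v$ (and the possible $\epsilon^{-2}$ in the pure second-derivative terms is matched by the product of two such Lipschitz factors, or by Lipschitz continuity of a derivative of $\sigma$ together with one such factor). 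The same counting shows that for $k$ derivatives of the output we need $\sigma \in W^{k+2,\infty}$ and $\nu \in W^{k+1,\infty}$, which are satisfied for $k\leq 1$ by Assumption~\ref{ass:sigma_nu}. Bounding $|Z_{st}| \leq \omega_Z(s,t)^\alpha$ and $|\ZZ_{st}| \leq \omega_Z(s,t)^{2\alpha}$ gives the uniform control \eqref{omega_uniform}. Finally, Chen's relations \eqref{chen} for $(S^\epsilon, \SS^\epsilon)$ are inherited from those of $\S$ by pure conjugation with the bijection $T_\epsilon$, a purely algebraic verification.

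The main obstacle is the systematic bookkeeping of Taylor expansions at each order of differentiation: one has to verify that every occurrence of $\epsilon^{-1}$ introduced by $\partial_v$ is matched by an antisymmetric combination of $\sigma$'s vanishing on $\{v=0\}$, and that this matching survives to higher derivatives in $u$ and $w$ (especially for $\SS^\epsilon$, where two "levels" of cancellation are required). This is ultimately where the symmetric tensor-product structure of $\S$, reflecting the doubling-of-variables trick, plays its decisive role.
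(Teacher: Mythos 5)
Your treatment of the first-level operator $S^{\epsilon}$ is essentially the paper's own argument: in the coordinates $x_{\pm}$ the conjugated operator splits into a local-mean part acting on $\nabla_{+}$ plus zeroth-order terms, and a finite-difference coefficient $\frac{1}{2\epsilon}\bigl(\sigma(x)-\sigma(y)\bigr)$ acting on $\nabla_{-}$, with each further derivative in the estimates costing one more derivative of $\sigma$ (resp.\ $\nu$); that part is sound, as is the remark that Chen's relations are preserved under conjugation by $T_{\epsilon}$.

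The second-level argument, however, has a genuine gap: you never invoke geometricity of $\Z$, yet the cancellation you assert for the $\epsilon^{-2}$ (and part of the $\epsilon^{-1}$) terms is exactly equivalent to it. In the direct expansion $\SS^{*}_{st}=\ZZ^{k\ell}_{st}\bigl(\Gamma^{\ell}_{x}\Gamma^{k}_{x}+\Gamma^{\ell}_{y}\Gamma^{k}_{y}\bigr)+Z^{k}_{st}Z^{\ell}_{st}\,\Gamma^{\ell}_{y}\Gamma^{k}_{x}$, the diagonal pieces produce, after conjugation by $T_{\epsilon}$, an $\epsilon^{-2}\nabla_{-}^{2}$ contribution whose coefficient is of the form $\ZZ^{k\ell}\sigma^{\ell j}\sigma^{ki}$ evaluated at a \emph{single} point (no difference of $\sigma$'s appears at all), while the cross term comes with the prefactor $Z^{k}Z^{\ell}$; these recombine into a product of two finite differences only when $\sym\ZZ^{k\ell}_{st}=\frac12 Z^{k}_{st}Z^{\ell}_{st}$, i.e.\ when $\Z$ is geometric. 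Concretely, for $K=d=1$, $\nu=0$ and $\sigma$ constant, the singular part of $T_{\epsilon}^{-1}\SS^{*}_{st}T_{\epsilon}$ equals
\[
-\frac{\sigma^{2}}{4\epsilon^{2}}\bigl((Z_{st})^{2}-2\ZZ_{st}\bigr)\,\nabla_{-}^{2}
=-\frac{\sigma^{2}}{4\epsilon^{2}}\,\delta[\Z]_{st}\,\nabla_{-}^{2}\,,
\]
which blows up for every non-geometric lift; since your bookkeeping makes no use of geometricity it would "prove" the proposition for such lifts as well, so the claimed matching of every $\epsilon^{-2}$ by "products of Lipschitz factors" cannot come out of Taylor expansion alone. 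The paper sidesteps the bookkeeping entirely: using \eqref{geometricity} for the symmetric part of $\ZZ$ and the commutativity of $\Gamma_{x},\Gamma_{y}$ for the antisymmetric part, it first establishes the algebraic factorization $\SS^{*}_{st}=\ZZ^{k\ell}_{st}(\Gamma^{\ell}_{x}+\Gamma^{\ell}_{y})(\Gamma^{k}_{x}+\Gamma^{k}_{y})$ (see \eqref{SS_symmetric_2}), after which the bounds for $\SS^{\epsilon}$ follow by applying the first-level estimates twice. To repair your proof, insert this identity (citing geometricity) before estimating, or carry out the recombination of the singular terms explicitly using $\sym\ZZ=\frac12 Z\otimes Z$.
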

We now need to introduce some useful notations.
\begin{notaAdd}
Recall \eqref{nota:translation}.
Given $a\in\O$ and $\epsilon >0,$
it is useful to introduce the ``local mean'' as the linear map:
\begin{equation}
\label{nota:m_epsilon}
\boldsymbol m^a_{\epsilon }:=\frac12\left(\tt_{-\epsilon a}+\tt_{\epsilon a}\right)\,.
\end{equation}
\end{notaAdd}
\begin{notaAdd}
For $a\in \O$, we define the finite-difference operator
\begin{equation}\label{nota:finite_difference}
\boldsymbol\Delta _{\epsilon}^a:=\frac{\tt_{-\epsilon a}-\tt_{\epsilon a}}{2\epsilon }\,.
\end{equation}
For the reader's convenience, the main properties of $\df$ are provided in Appendix \ref{app:finite_difference}.
\end{notaAdd}
\begin{notaAdd}
Similarly to Section \ref{sec:tensorization}, it will be convenient to use the new coordinates $\chi :\Omega\to\O\times B_1$ defined by
\begin{equation}
\label{new_coordinates}
(x_+,x_-)=\chi (x,y):=\left(\tfrac{x+ y}{2},\tfrac{x-y}{2}\right)\,,\quad \text{for}\enskip (x,y)\in\Omega \,.
\end{equation}
Note that $|\det\D\chi|=2^{-d}$ and that $\sqrt2 \chi $ is a rotation.
\end{notaAdd}
\begin{notaAdd}
Given $\Phi:\OO \to\R$, we will occasionally denote by $\check{\Phi }:=\Phi \circ\chi^{-1} ,$ namely the map $\check\Phi :\OO\to\R$ given by:
\begin{equation}\label{nota:phi_pm}
\check{\Phi }(x_+,x_-):=\Phi (x_++x_-,x_+-x_-)\,,
\quad 
\text{for}\enskip (x_+,x_-)\in \OO.
\end{equation}
Provided $\Phi \in \F_1,$ we have the identities
\begin{equation}\label{id_nabla_pm}
\left[\begin{aligned}
[(\nabla _x+\nabla _y)\Phi]\circ \chi ^{-1} =\nabla _{+}\check{\Phi }
\\
[(\nabla _x-\nabla _y)\Phi]\circ\chi ^{-1} =\nabla _{-}\check{\Phi }
\end{aligned}\right.
\end{equation}
where $\nabla _{+},\nabla _{-}$ denote the gradients with respect to the new variables $x_+,x_-.$
In view of these relations, we will henceforth write (with a slight abuse of notation):
\[
\nabla _{\pm}[\Phi (x,y)] = \nabla _x\Phi (x,y) \pm \nabla _y\Phi (x,y)\,.
\]
\end{notaAdd}
\begin{proof}[Proof of Proposition \ref{pro:renormalizable}]
By definition we have 
\[
S^{\epsilon,*}_{st}
=:Z_{st}^kT^{-1}_\epsilon (\Gamma^k_x +\Gamma ^k_y) T_\epsilon \,.
\]
where for $k\leq K,$ $\Gamma^k :W^{1,\infty}(\R^d)\to L^\infty(\R^d)$ is the first order differential operator
\begin{equation}
 \label{nota_Gamma}
\Gamma^k =-\sigma^k\cdot \nabla -\div\sigma^k +\nu^k\,.
\end{equation}

Intuitively, the problematic terms are those that contain derivatives. Indeed, whenever we differentiate $T_\epsilon\Phi$, we obtain a blow up in $\epsilon$. The key observation is then that the blow up only appears in the $x_-$ direction and the bad terms are always multiplied by $\sigma(x)-\sigma(y)$ (or similar), which allows to compensate this blow-up by making use of the higher regularity of $\sigma$.

\item[\indent\textit{Estimate on $S^{\epsilon}_{st}$ in $\L(\F_{-0},\F_{-1}).$}]
For any $\Phi \in \F_1$, we have
\begin{equation}\label{eq:Vstar_epsilon}
\begin{aligned}
(\sigma^k(x)\cdot \nabla_x &+\sigma^k(y)\cdot \nabla _y)(T_\epsilon \Phi )(x,y)\\
&=\sigma ^k(x)\cdot T_\epsilon 
\Big(\tfrac{1}{2}\nabla _+\Phi  
+\tfrac{1}{2\epsilon }\nabla _-\Phi  \Big)
+\sigma ^k (y)\cdot T_\epsilon \Big(\tfrac{1}{2}\nabla _+\Phi - \tfrac{1}{2\epsilon }\nabla _-\Phi\Big)  
\\
&=\Big(\frac{\sigma ^k (x)+\sigma ^k (y)}{2}\Big)\cdot T_\epsilon \nabla _+\Phi 
+\Big(\frac{\sigma ^k (x)-\sigma ^k (y)}{2\epsilon }\Big)\cdot T_\epsilon \nabla _-\Phi.
\end{aligned}
\end{equation}
Now, making use of the notations \eqref{nota:m_epsilon} and \eqref{nota:finite_difference}
we obtain that for a.e.\ $x,y$ in $\OO$
\begin{equation}
\label{Gamma_sym}
T_\epsilon ^{-1}(\Gamma^k_x+\Gamma^k _y) T_\epsilon 
\equiv-\left(\m\sigma ^k\right)(x_+)\cdot \nabla _{+}
-\left(\df\sigma ^k\right) (x_+)\cdot \nabla _-
+2\m(-\div\sigma ^k +\nu^k)
\end{equation}
and we abbreviate
\begin{equation}
\label{abr:x_plus}
x_+:=\frac{x+y}{2}\,,\quad x_-:=\frac{x-y}{2}\,.
\end{equation}
For the first term in \eqref{Gamma_sym}, we have
\[
\begin{aligned}
\nnn{\left(\m\sigma^k\right)\cdot\nabla _{+}\Phi }{0}
&\equiv
\esssup_{x_+,x_-}\Big|\Big(\frac{\tt_{-\epsilon x_-}+\tt_{\epsilon x_-}}{2}\Big)\sigma^k(x_+)\cdot \nabla _{+}\check \Phi (x_+,x_-)
\Big|
\\
&\leq |\sigma |_{L^\infty}\nnn{\Phi }{1}\,.
\end{aligned}
\]
For the second term, using Lemma \ref{lem:finite-difference} and
the fact that a.e., $\spt\check\Phi(x_+,\cdot ) \subset B_1$ we have
\begin{equation}\label{csq:taylor}
\nnn{(\df\sigma^k)\cdot\nabla _-\Phi  }{0}
\leq |\nabla \sigma  |_{L^\infty}\nnn{\Phi }{1}\,.
\end{equation}
Concerning the last term in \eqref{Gamma_sym}, we have
\[
\begin{aligned}
\nnn{2\m(-\div\sigma^k +\nu^k) \Phi}{0}
&\leq \esssup_{x_+,x_-}|\big(\tt_{-\epsilon x_-}+\tt_{\epsilon x_-}\big)(\nu^k -\div\sigma^k) (x_+)\check\Phi (x_+,x_-)|
\\
&\leq 2(|\nu |_{L^\infty} +|\div\sigma |_{L^\infty})\nnn{\Phi }{0}\,.
\end{aligned}
\]
Summing these bounds, we obtain the first estimate, namely:
\begin{equation}\label{estimate_Gamma1}
| S^{\epsilon,*}_{st}|_{\L(\F_1,\F_0)}\leq C(|\sigma |_{W^{1,\infty}},|\nu |_{L^\infty})\omega _Z(s,t)^\alpha \,.
\end{equation}

\item[\indent\textit{Estimate on $S^{\epsilon}_{st}$ in $\L(\F_{-2},\F_{-3}).$}]
Let $\Phi\in \mathcal{F}_3$. First we observe  that since the change of coordinates $\sqrt2\chi $ is a rotation, in order to estimate $\nnn{S^{\epsilon,*}_{st}\Phi}{2}$, it is sufficient to estimate $\nnn{(\nabla_\pm)^2 S^{\epsilon,*}_{st}\Phi}{0}$. To this end, we further note that the only critical term in \eqref{Gamma_sym} is the second one which contains $\epsilon^{-1}$. But in that case, it holds
\begin{align}\label{eq:123}
\begin{aligned}
\nabla_-[(\df\sigma^k)(x_+)\cdot\nabla_-\Phi]&=\boldsymbol m^{x_-}_\epsilon(\nabla\sigma^k)(x_+)\cdot\nabla_-\Phi+(\df\sigma^k)(x_+)\cdot\nabla^2_-\Phi,\\
\nabla_+[(\df\sigma^k)(x_+)\cdot\nabla_-\Phi]&=
\df(\nabla\sigma^k)(x_+)\cdot\nabla_-\Phi+(\df\sigma^k)(x_+)\cdot\nabla_+\nabla_-\Phi,
\end{aligned}
\end{align}
where, similarly as before, Lemma \ref{lem:finite-difference} yields that a.e.\ on $\Omega :$
\[
|\df (\nabla\sigma^k)|\leq |\sigma|_{W^{2,\infty}},\qquad |\df \sigma^k|\leq |\sigma|_{W^{1,\infty}}\,.
\]

By the same arguments we can proceed further and apply  $\nabla_\pm$ to \eqref{eq:123}. This finally leads to
\begin{equation*}
|S^{\epsilon,*}_{st}|_{\L(\F_3,\F_2)}\leq 
C(|\sigma |_{W^{3,\infty}},|\nu |_{W^{2,\infty}})
\omega_Z(s,t)^{\alpha}\,.
\end{equation*}

\item[\indent\textit{Estimates on $\SS^\epsilon$ in $\L(\F_{-0},\F_{-2})$ and $\L(\F_{-1},\F_{-3})$}.]
Using geometricity, renormalizability of the term $\SS^\epsilon $ can be reduced to the previous cases.
This is a consequence of the identity
\begin{equation}
\label{SS_symmetric_2}
\SS^*_{st}
\overset{\mathrm{def}}{=} \ZZ_{st}^{k\ell}(\Gamma ^\ell _x\Gamma ^k_x +\Gamma ^\ell _y\Gamma ^k_y)
+Z^k_{st}Z^\ell_{st} \Gamma ^\ell _y\Gamma ^k_x
=\ZZ_{st}^{k\ell}(\Gamma _x^\ell +\Gamma _y^\ell )(\Gamma _x^k+\Gamma _y^k)
\,,\quad \quad (s,t)\in \Delta \,,
\end{equation}
where $\Gamma^k $ is as in \eqref{nota_Gamma}.

Indeed, emphasizing summations, 
denoting by $\sym \ZZ_{st}^{k\ell }:=\frac12(\ZZ_{st}^{k\ell }+\ZZ_{st}^{\ell k})\equiv \frac12Z_{st}^kZ_{st}^\ell $ and $\anti \ZZ_{st}^{k\ell }:=\frac12(\ZZ_{st}^{k\ell }-\ZZ_{st}^{\ell k}),$ and splitting the term $\sum_{k,\ell }Z_{st}^kZ_{st}^\ell \Gamma ^\ell _y\Gamma ^k_x $ into two equal parts, one can write:
\begin{equation}
\begin{aligned}
\label{SS_symmetric_1}
\SS^*
&=\sum_{k,\ell }(\sym\ZZ_{st} ^{k\ell}+\anti\ZZ_{st}^{k\ell})(\Gamma_x^\ell  \Gamma_x^k +\Gamma_y^\ell \Gamma^k_y)
+ \sum_{k,\ell }\frac{Z^k_{st}Z_{st}^\ell }{2}\Gamma^\ell _y\Gamma^k_x   + \sum_{k',\ell' }\frac{Z_{st}^{\ell'} Z_{st}^{k'}}{2}\Gamma^{\ell '}_x \Gamma^{k'}_y
\\
&=\sum_{k,\ell }\sym\ZZ_{st}^{k\ell}(\Gamma _x^\ell  +\Gamma _y^\ell)(\Gamma _x^k  +\Gamma _y^k) +\sum_{k,\ell }\anti \ZZ_{st}^{k\ell }(\Gamma_x^\ell  \Gamma_x^k +\Gamma_y^\ell \Gamma^k_y)\,.
\end{aligned}
\end{equation}
However, using antisymmetry, the second term above can be written as
$\sum_{k,\ell }\anti\ZZ_{st}^{k\ell}(\Gamma _x^\ell  +\Gamma _y^\ell)(\Gamma _x^k  +\Gamma _y^k).$
Summing in \eqref{SS_symmetric_1}, we see that \eqref{SS_symmetric_2} holds.

Now, let $\Phi $ in $\F_2$ and estimate
\[
\begin{aligned}
\nnn{\SS_{st}^{\epsilon,*}\Phi }{0}
&\leq |\ZZ_{st}^{k\ell}|\nnn{T_\epsilon ^{-1}(\Gamma ^\ell _x+\Gamma ^\ell _y)T_\epsilon T_\epsilon ^{-1}(\Gamma ^k _x+\Gamma ^k _y)T_\epsilon \Phi }{0}
\\
&\leq C(|\sigma |_{W^{1,\infty}},|\nu|_{L^\infty} )\omega _Z(s,t)^{2\alpha }\nnn{T_\epsilon ^{-1}(\Gamma ^k _x+\Gamma ^k _y)T_\epsilon\Phi }{1}
\\
&\leq C(|\sigma |_{W^{2,\infty}},|\nu|_{W^{1,\infty}} )\omega _Z(s,t)^{2\alpha }\nnn{\Phi }{2}\,,
\end{aligned}
\]
where we have used the bounds obtained in the first part.
This yields our first estimate.

The second estimate again reduces to the previous bounds:
we have for $\Phi \in \F_3:$
\[
\begin{aligned}
\nnn{\SS_{st}^{\epsilon,*}\Phi }{1}
&\leq |\ZZ_{st}^{k\ell}|\nnn{T_\epsilon ^{-1}(\Gamma ^\ell _x+\Gamma ^\ell _y)T_\epsilon T_\epsilon ^{-1}(\Gamma ^k _x+\Gamma ^k _y)T_\epsilon \Phi }{1}
\\
&\leq C(|\sigma |_{W^{2,\infty}},|\nu|_{W^{1,\infty}} )\omega _Z(s,t)^{2\alpha }\nnn{T_\epsilon ^{-1}(\Gamma ^k _x+\Gamma ^k _y)T_\epsilon\Phi }{2}
\\
&\leq C(|\sigma |_{W^{3,\infty}},|\nu|_{W^{2,\infty}} )\omega _Z(s,t)^{2\alpha }\nnn{\Phi }{3}\,,
\end{aligned}
\]
which proves the claimed bound.
\end{proof}

\subsection{Uniform bound on the drift.}

We proceed with a uniform estimate for the drift in \eqref{eq:U_epsilon}.
\begin{proposition}
\label{pro:drift}
There exists a control  $\omega _\Pi,$ depending on $u$ in $\Bc,$ $b$ in $L^{2r}L^{2q},$ $c$ in $L^rL^q,$
and on  $M,r,q,$ such that uniformly in $\epsilon \in(0,1)$, for every $(s,t)$ in $\Delta :$
\begin{equation}
\nnn{\delta \Pi_{st}^\epsilon}{-1} \leq \omega _\Pi(s,t)\,.
\end{equation}
Furthermore, we have the bound
\begin{multline}
\omega _{\Pi}(s,t)\leq C(M,r,q)\Big(\|u\|_{\infty,2;[s,t]}\|\nabla u\|_{1,2;[s,t]} 
\\
+ \|\nabla u\|^2_{2,2;[s,t]} + \|b\|_{2r,2q;[s,t]}\|u\|^2_{\Bc_{s,t}} + \|c\|_{r,q;[s,t]}\|u\|^2_{\Bc_{s,t}}\Big)\,,
\end{multline}
uniformly over $(s,t)\in\Delta ,$
where $C>0$ depends only the listed quantities.
\end{proposition}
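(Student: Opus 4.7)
Fix $\Psi \in \F_1(\Omega)$ and use Remark \ref{rem:Pi} to write $\langle\delta\Pi^\epsilon_{st},\Psi\rangle = \langle\delta\Pi_{st},T_\epsilon\Psi\rangle$, which is a time integral over $[s,t]$ of the pairing of $A_ru_r\otimes u_r+u_r\otimes A_ru_r$ with $T_\epsilon\Psi$. Expanding via the weak form \eqref{def:A} of $A$ splits it into three families of integrals, one for each of the coefficients $a$, $b$, $c$. The strategy is to estimate each family after the change of variables $(x,y)=(\xi_++\epsilon\xi_-,\xi_+-\epsilon\xi_-)$, whose Jacobian $(2\epsilon)^d$ exactly cancels the $(2\epsilon)^{-d}$ factor in $T_\epsilon\Psi$, mapping $\Omega_\epsilon$ onto $\R^d\times B_1$ and producing an integrand in terms of derivatives of $\tilde\Psi(\xi_+,\xi_-):=\Psi(\xi_++\xi_-,\xi_+-\xi_-)$.

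For the $b$- and $c$-families, no derivative of $\tilde\Psi$ appears, and a direct application of Hölder's inequality with the exponents $\frac{1}{2r}+\frac{1}{2}+\frac{r-1}{2r}=1$ and $\frac{1}{2q}+\frac{1}{2}+\frac{q-1}{2q}=1$, combined with the translation invariance of $L^p$-norms and the interpolation inequality \eqref{consequence_rq}, yields a uniform bound of the form $C(r,q)|B_1|\nnn{\Psi}{0}\bigl(\|b\|_{2r,2q;[s,t]}+\|c\|_{r,q;[s,t]}\bigr)\|u\|^2_{\Bc_{s,t}}$, essentially replicating the computations leading to \eqref{borne_controle_mu}.

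The $a$-family is the delicate part. The chain rule gives $\partial_{x_i}T_\epsilon\Psi=(2\epsilon)^{-d}\bigl[\tfrac12\partial_{\xi_+,i}\tilde\Psi+\tfrac1{2\epsilon}\partial_{\xi_-,i}\tilde\Psi\bigr]$ and analogously for $\partial_{y_i}$ with opposite sign on the $\partial_{\xi_-}$-term; summing the $x$- and $y$-contributions and applying the averaging identity $a_+X+a_-Y=\boldsymbol m_\epsilon^{\xi_-}a\cdot(X+Y)+\epsilon\boldsymbol\Delta_\epsilon^{\xi_-}a\cdot(X-Y)$ decomposes the integrand into four pieces. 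The ``slow'' piece $-\tfrac12\boldsymbol m_\epsilon^{\xi_-}a^{ij}\,\partial_{\xi_+,j}\tilde F\,\partial_{\xi_+,i}\tilde\Psi$ (with $\tilde F:=u_+u_-$) is directly bounded via $|\boldsymbol m_\epsilon^{\xi_-}a|\leq M$ and Cauchy--Schwarz by $M|B_1|\nnn{\Psi}{1}\|\nabla u\|_{1,2;[s,t]}\|u\|_{\infty,2;[s,t]}$; the mixed piece $-\tfrac12\boldsymbol\Delta_\epsilon^{\xi_-}a^{ij}\,\partial_{\xi_-,j}\tilde F\,\partial_{\xi_+,i}\tilde\Psi$ is uniformly bounded because $\boldsymbol\Delta_\epsilon^{\xi_-}a\cdot\partial_{\xi_-,j}\tilde F=\tfrac12(a_+-a_-)[(\partial_j u)_+u_--u_+(\partial_j u)_-]$ is controlled by $M$ times an $L^1(\xi_+)$ function. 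The two remaining pieces involve $\epsilon^{-2}\boldsymbol m_\epsilon^{\xi_-}a\,\partial_{\xi_-,j}\tilde F\,\partial_{\xi_-,i}\tilde\Psi$ and $\boldsymbol\Delta_\epsilon^{\xi_-}a\,\partial_{\xi_+,j}\tilde F\,\partial_{\xi_-,i}\tilde\Psi$; they are rewritten using $\partial_{\xi_-,j}\tilde F=\epsilon[(\partial_j u)_+u_--u_+(\partial_j u)_-]$, the decomposition $a_+b_--a_-b_+=2\epsilon[\boldsymbol\Delta_\epsilon^{\xi_-}a\,\boldsymbol m_\epsilon^{\xi_-}b-\boldsymbol m_\epsilon^{\xi_-}a\,\boldsymbol\Delta_\epsilon^{\xi_-}b]$, the commutation $\boldsymbol\Delta_\epsilon^{\xi_-}(\partial_j u)=\partial_j\boldsymbol\Delta_\epsilon^{\xi_-}u$, Lemma \ref{lem:finite-difference} (which gives $\|\boldsymbol\Delta_\epsilon^{\xi_-}u\|_{L^2}\leq|\xi_-|\|\nabla u\|_{L^2}$), the antisymmetry $\int\boldsymbol\Delta_\epsilon^{\xi_-}f\cdot g\,\d\xi_+=-\int f\cdot\boldsymbol\Delta_\epsilon^{\xi_-}g\,\d\xi_+$ in $\xi_+$, and the Leibniz rule $\boldsymbol\Delta(fg)=\boldsymbol\Delta f\,\boldsymbol m g+\boldsymbol m f\,\boldsymbol\Delta g$, until each resulting summand is stripped of its $1/\epsilon$ singularity and controlled by $M|B_1|\nnn{\Psi}{1}\bigl(\|\nabla u\|^2_{2,2;[s,t]}+\|\nabla u\|_{1,2;[s,t]}\|u\|_{\infty,2;[s,t]}\bigr)$. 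Summing the four contributions and invoking \eqref{product_controls} together with the absolute continuity of the $\aa,\bb,\cc,\uu$-controls of Remark \ref{rem:controls_abc} gives the announced $\omega_\Pi$. The main obstacle is the $a$-contribution: one has to verify that every apparent $\epsilon^{-1}$-singularity is cancelled by the diagonal vanishing of the antisymmetric (under $x\leftrightarrow y$) combination $a_+(\partial_j u)_+u_--a_-(\partial_j u)_-u_+$, so that only the $L^\infty$-ellipticity of Assumption \ref{ass:a} and the $W^{1,2}$-regularity of $u$ enter the final estimates.
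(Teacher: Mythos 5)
Your treatment of the $b$- and $c$-families is fine and coincides with the paper's (H\"older with the exponents of \eqref{values:r_q}, translation invariance, \eqref{consequence_rq}). The gap is in the $a$-family. By first invoking the weak form \eqref{def:A} with the test function $T_\epsilon\Psi$ you create the singular term $\tfrac{1}{2\epsilon}\bigl[(a^{ij}\partial_j u)_+u_- -(a^{ij}\partial_j u)_-u_+\bigr]\partial_{\xi_-,i}\tilde\Psi$, and your plan to strip the $1/\epsilon$ relies on quantities that are not controllable under the standing hypotheses: $\boldsymbol\Delta_\epsilon^{\xi_-}a$ is of size $\epsilon^{-1}|a|_{L^\infty}$ and nothing better, because Assumption \ref{ass:a} only gives measurable bounded $a$ (the whole point of the paper is to allow discontinuous coefficients), so Lemma \ref{lem:finite-difference} does not apply to it; likewise $\boldsymbol\Delta_\epsilon^{\xi_-}(\partial_j u)=\partial_j\boldsymbol\Delta_\epsilon^{\xi_-}u$ cannot be bounded by that lemma since a weak solution has no second derivatives in $L^2$. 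The only remaining tool in your list, the discrete summation by parts in $\xi_+$, transfers the difference quotient either back onto $a$ (same obstruction) or onto $\partial_{\xi_-,i}\tilde\Psi$, and bounding $\boldsymbol\Delta_\epsilon^{\xi_-}\bigl(\partial_{\xi_-,i}\tilde\Psi\bigr)$ uniformly in $\epsilon$ costs a second derivative of $\Psi$. In the best case your scheme therefore yields an estimate of $\nnn{\delta\Pi^\epsilon_{st}}{-2}$ against $\nnn{\Psi}{2}$, not the claimed $\F_{-1}$-bound, which is exactly what is needed to feed $\omega_\Pi$ into Proposition \ref{pro:apriori} as the drift control; the pieces you call (iii) and (iv) simply cannot be closed with only $a\in L^\infty$, $u\in\Bc$ and $\Psi\in\F_1$, despite your (correct) observation that the final answer should involve only these quantities. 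Your piece (ii) is indeed harmless, but that does not rescue the others.

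The paper avoids ever producing the $\epsilon^{-1}$: it uses the explicit formula \eqref{T_epsilon_star} for the adjoint $T_\epsilon^*$, writes the pairing as an integral over $x_-\in B_1$ of an $\O$-pairing of the translated distribution $\tt_{-\epsilon x_-}\div(a\nabla u)=\div_{x_+}\bigl(\tt_{-\epsilon x_-}(a\nabla u)\bigr)$ against $\tt_{\epsilon x_-}u\,\check\Phi(\cdot,x_-)$, and only then integrates by parts, in the $x_+$-variable alone. The derivative thus falls on $\tt_{\epsilon x_-}u$ (giving $\tt_{\epsilon x_-}\nabla u$) and on $\check\Phi$ (giving $\nabla_+\check\Phi$), and since translations are $L^2$-isometries one gets directly the bound $2M\bigl(\|\nabla u\|^2_{2,2}+\|\nabla u\|_{2,2}\|u\|_{\infty,2}\bigr)\nnn{\Phi}{1}$ with no difference quotient of $a$ or of $\nabla u$ and no second derivative of the test function. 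If you want to salvage your computation, you must reorganize it so that the $\xi_+$-integration by parts is performed on the block $u_-\tilde\Psi$ before the $x$-gradient of $T_\epsilon\Psi$ is ever expanded — which is precisely the paper's argument.
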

Let $k\geq 0,$ and assume that we are given a measurable $v(x,y)$ in $\F_{-k}(\Omega _\epsilon ),$
such that its trace $\trace v$ onto the diagonal $\Gamma :=\{x,y\in\R^{2d}:x=y\}$ is a well-defined element in $(W^{k,\infty}(\Gamma ))^*$ (this is the case for instance if $v(x,y)=f^1(x)f^2(y)$ where $f^1\in W^{-k,2}(\O)$ and $f^2\in W^{k,2}(\O)$).
The adjoint of $T_\epsilon $ is given a.e.\ on $\Omega$ by the formula
\begin{equation}
\label{T_epsilon_star}
T_\epsilon^*v(x,y)=
2^{-d}\left(\tt_{-\epsilon \frac{x-y}{2}}\otimes \tt_{\epsilon \frac{x-y}{2}}\right)v\Big(\frac{x+y}{2},\frac{x+y}{2}\Big)\,,
\quad (x,y)\in\Omega \,,
\end{equation}
which, integrating against $\Phi \in \F_k,$
and letting $(x,+,x_-):=\chi (x,y)$ yields the representation
\begin{equation}
\label{representation}
\begin{aligned}
\langle T_\epsilon ^*v,\Phi \rangle
&=\iint_{\O\times B_1} \left(\tt_{-\epsilon x_-}\otimes \tt_{\epsilon x_-}\right)v(x_+,x_+)\Phi (x_++x_-,x_+-x_-)\d x_+\d x_-
\\
&=\int_{B_1}~_{W^{k,\infty}(\O )^*}\Big\langle \trace\left(\tt_{-\epsilon x_-}\otimes \tt_{\epsilon x_-}\right) v,\check\Phi (\cdot ,x_-) \Big\rangle_{W^{k,\infty}(\O)}\d x_-\,.
\end{aligned}
\end{equation}
\begin{proof}
By definition, we have $\delta \Pi_{st}^\epsilon =\int_s^t\langle Au\otimes u+u\otimes Au,T_\epsilon \Phi \rangle\d r.$
For notational simplicity, we now fix $r$ in $[s,t],$ and denote by $u:=u_r,$ $a^{ij}=a^{ij}_r,$ and so on.
For $\Phi \in\F_1$ we have
\begin{equation}\label{eq:drift}
\begin{aligned}
\langle Au\otimes u&+u\otimes Au ,T_\epsilon \Phi \rangle
\\
&=~_{\F_{-1}(\Omega _\epsilon )}\big\langle\div_x(a_x\nabla_x u_x)u_y,T_\epsilon \Phi \big\rangle_{\F_1(\Omega _\epsilon )}
+~_{\F_{-1}(\Omega _\epsilon )}\big\langle\div_y(a_y\nabla_y u_y)u_x,T_\epsilon \Phi \big\rangle_{\F_1(\Omega _\epsilon )}
\\
&\hspace{3em}+\iint_{\Omega_\epsilon } b^i(x)\partial _iu(x)u(y)T_\epsilon \Phi\d x\d y
+\iint_{\Omega_\epsilon } b^i(y)\partial _iu(y)u(x)T_\epsilon \Phi\d x\d y
\\
&\hspace{6em}+\iint_{\Omega_\epsilon } c(x)u(x)u(y)T_\epsilon \Phi \d x\d y
+\iint_{\Omega_\epsilon } c(y)u(x)u(y)T_\epsilon \Phi \d x\d y
\,,
\\
&=:\T_a^1+\T_a^2+\T_b^1+\T_b^2+\T_c^1+\T_c^2\,.
\end{aligned}
\end{equation}

\item[\indent\textit{Estimate on $\T_a.$}]
Using \eqref{representation},
the first term can be written as:
\begin{equation}
\label{Ta}
\begin{aligned}
\T_a^1
&=\int_{B_1}\tensor[_{(W^{1,\infty}(\O))^*}]{\Big\langle \trace\big[\tt _{-\epsilon x_-}\div (a\nabla u)\tt_{\epsilon x_-}u\big],\check\Phi(\cdot ,x_-) \Big\rangle}{_{W^{1,\infty}(\O)}}\d x_-
\\
&=\int_{B_1}\tensor[_{(W^{1,\infty}(\O))^*}]{\Big\langle \trace\big[\div_{x_+}\big( \tt _{-\epsilon x_-}(a\nabla u)\big)\tt_{\epsilon x_-}u\big],\check\Phi (\cdot ,x_-) \Big\rangle}{_{W^{1,\infty}(\O)}}\d x_-
\\
&=\int_{B_1 }\tensor[_{W_{+}^{-1,2}}]{\Big\langle \div_{x_+}\big( \tt _{-\epsilon x_-}(a\nabla u)\big),\tt_{\epsilon x_-}u\check\Phi(\cdot ,x_-) \Big\rangle}{_{W_{+}^{1,2}}}\d x_-
\\
&=-\int_{B_1 }\Big(\tt _{-\epsilon x_-}[a\nabla u],\nabla_{+}\big( \tt_{\epsilon x_-}u(x_+) \big)\check\Phi (\cdot ,x_-)+ \tt_{\epsilon x_-}u \nabla _+\check\Phi (\cdot ,x_-)\Big)_{L^2_+}\d x_-
\\
&=-\int_{B_1 }\Big(\tt _{-\epsilon x_-}[a\nabla u],\tt_{\epsilon x_-}\nabla u \check\Phi (\cdot ,x_-)\Big)_{L^2_+}\d x_-
\\
&\quad\quad \quad \quad \quad  \quad \quad \quad 
-\int_{B_1 }\Big(\tt _{-\epsilon x_-}[a\nabla u], \tt_{\epsilon x_-}u \nabla _+\check\Phi (\cdot ,x_-)\Big)_{L^2_+}\d x_-\,.
\end{aligned}
\end{equation}
Using that
$\tt_{\epsilon x_-}$ leaves the $L^2$ norm invariant for every fixed $x_-$ in $\O,$ we have
\[
\begin{aligned}
\T_a^1
&\leq
\int_{B_1}|\tt _{-\epsilon x_-}[a\nabla u]|_{L^2_+}|\tt_{\epsilon x_-}\nabla u |_{L^2_+}|\check\Phi (\cdot ,x_-)|_{L^\infty_+}\d x_-
\\
&\quad \quad \quad 
+ \int_{B_1}|\tt _{-\epsilon x_-}[a\nabla u]|_{L^2_+}|\tt_{\epsilon x_-}u |_{L^2_+}|\nabla _+\check\Phi (\cdot ,x_-)|_{L^\infty_+}\d x_-
\\
&=
\int_{B_1}|a\nabla u|_{L^2_+}|\nabla u |_{L^2_+}|\check\Phi (\cdot ,x_-)|_{L^\infty_+}\d x_-
+ \int_{B_1}|a\nabla u|_{L^2_+}|u |_{L^2_+}|\nabla _+\check\Phi (\cdot ,x_-)|_{L^\infty_+}\d x_-\,,
\end{aligned}
\]
Hence, doing similar computations for $\T_a^2,$ it follows that 
\begin{equation}
\label{Ta}
\int_s^t \T_a\d r\leq 2M \left(\|\nabla u\|^2_{2,2}\nnn{\Phi }{0}+ \|\nabla u\|_{2,2}\|u\|_{\infty,2}\nnn{\Phi }{1}\right)\,.
\end{equation} 

\item[\indent\textit{Estimate on $\T_b.$}]
By \eqref{T_epsilon_star}, we have
\begin{equation}
\label{drift_Tb_1}
\begin{aligned}
\T^{1}_b
&=\iint_{B_1\times\O}\tt_{-\epsilon x_-}(b^i\partial _iu)(x_+)\tt_{\epsilon x_-}u(x_+)\check\Phi(x_+,x_-) \d x_+\d x_-
\\
&\leq\int_{B_1}|\tt_{-\epsilon x_-}b|_{L^{2q}_+}|\tt_{-\epsilon x_-}\nabla u|_{L^2_+}|\tt_{\epsilon x_-}u|_{L_+^{\frac{2q}{q-1}}}
\d x_-\nnn{\Phi }{0}\,.
\end{aligned}
\end{equation} 
Using H\"older, \eqref{consequence_rq}, and then proceeding similarly for $\T^2_b$, we obtain 
\begin{equation}
\label{Tb}
\begin{aligned}
\int_s^t(\T^1_b+\T^2_b)\d r\leq 2\|b\|_{2r,2q;[s,t]}\|\nabla u\|_{2,2;[s,t]}\|u\|_{\frac{2r}{r-1},\frac{2q}{q-1};[s,t]}\nnn{\Phi }{0}
\\
\leq 2\beta \|b\|_{2r,2q;[s,t]}\|\nabla u\|_{2,2;[s,t]}\|u\|_{\Bc_{s,t}}\nnn{\Phi }{0}\,.
\end{aligned}
\end{equation}

\item[\indent\textit{Estimate on $\T_c.$}]
Similarly, it suffices to show the estimate for $\T^1_c.$
Using again \eqref{T_epsilon_star}, there comes
\begin{equation}
\label{Tc}
\T^1_c=\iint_{B_1\times\O}\tt_{-\epsilon x_-}[c u](x_+)\tt_{\epsilon x_-}u(x_+)\check\Phi\d x_+\d x_-\,.
\end{equation}
Hence, H\"older inequality and \eqref{consequence_rq} yield
\begin{equation}
\label{drift_T3}
\int_s^t(\T_c^1+\T_c^2)\d r
\leq 2\|c\|_{r,q;[s,t]}\|u\|_{\frac{2r}{r-1},\frac{2q}{q-1};[s,t]}^2
\leq 2\beta ^2\|c\|_{r,q;[s,t]}\|u\|_{\Bc_{s,t}}^2\nnn{\Phi }{0}\,.
\end{equation}
Combining \eqref{Ta}, \eqref{Tb} and \eqref{drift_T3},
we obtain the claimed bound.
\end{proof}
\subsection{The proof of uniqueness}
\label{subsec:uniqueness}
Finally, we have all in hand to complete the proof of uniqueness.
To this end, we let $\omega _\Pi (s,t)$ be as in Proposition \ref{pro:drift}
and recall that according to Proposition \ref{pro:apriori}, the following uniform estimate holds true for the remainder term:
\begin{equation}\label{bound:U_natural}
\nnn{\u^{\natural,\epsilon }_{st}}{-3}\leq C\Big(\sup_{r\in[s,t]}\nnn{\u^\epsilon_r}{-0}\omega_{B} (s,t)^{3\alpha }+\omega _\Pi (s,t)\omega _B(s,t)^{\alpha }\Big).
\end{equation}
for $(s,t)\in\Delta ,$ under the smallness condition $\omega _B(s,t)\leq L,$ for some $L>0.$
Note that for every $u^1,u^2\in L^2$ we have
\begin{multline}
\label{T_star_u}
\iint_{B_1\times\O}|T_\epsilon ^*(u^1\otimes u^2)(x,y)|\d x\d y
\\
= \iint_{B_1\times\O}|\tt_{-\epsilon x_-}u^1(x_+)\tt_{\epsilon x_-}u^2(x_+)|\d x_+\d x_-\leq C|u^1|_{L^2}|u^2|_{L^2}\,.
\end{multline}
Since we have the embedding $L^1(\Omega )\subset L^\infty(\Omega )^*,$
using \eqref{T_star_u} with $u^1=u^2=u,$ there comes
\begin{equation}
\label{strict_embedding}
\sup_{r\in[s,t]}\nnn{\u^\epsilon_r}{-0}
\leq C\sup_{r\in[s,t]}|u_r|_{L^2}^2\,,
\end{equation}
uniformly in $\epsilon>0.$
Combining the latter with \eqref{bound:U_natural} yields therefore a uniform bound of the remainder $\u^{\natural,\epsilon }.$

Now, take $\phi\in W^{3,\infty}(\O)$ and $\psi\in C_c^\infty(B_1)$ with $\int_{B_1}\psi \d x=1$
and define
\begin{equation}\label{Phi_R}
\Phi (x,y):=\phi \left(\tfrac{x+y}{2}\right)\psi \left(\tfrac{x-y}{2}\right)\,.
\end{equation}
Observe furthermore that
$\nnn{\Phi }{3}\leq C|\phi |_{W^{3,\infty}}\equiv C\nn{\phi }{3}$ for a positive constant depending on $\psi $ only.
\begin{lemma}
\label{lem:limit}
Let $u^2_t(x):=u_t(x)^2$ which defines an element of
$$C(I;L^1(\O ))\subset C(I;(L^\infty(\O))^*).$$ Then we have for every $\phi $ in $W^{3,\infty}:$
\begin{multline}\label{eq:h}
\langle\delta u^2_{st},\phi \rangle=\int_s^t\big(-2\langle a^{ij}\partial _ju,\partial _i(u\phi )\rangle +\langle b^i\partial _i(u^2)+2cu^2,\phi \rangle\big)\d r
\\
+\langle u^2_s,\hat B_{st}^{*}\phi \rangle
+\langle u^2_s,\hat\BB_{st}^{*}\phi \rangle
+ \langle u_{st}^{2,\natural},\phi \rangle\, ,
\end{multline}
where $\hat \B\equiv(\hat B,\hat \BB)$ is obtained by replacing $\nu $ by $2\nu $ in the definition of $\B,$
and $u^{2,\natural}$ belongs to $\V_ {2,\rm{loc}}^{1+}(I,(W^{3,\infty})^*)$.
Moreover the latter remainder term is estimated by the right hand side of \eqref{bound:U_natural}.
\end{lemma}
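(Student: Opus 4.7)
The plan is to test the renormalized equation \eqref{eq:U_epsilon} against $\Phi$ defined in \eqref{Phi_R} and pass to the limit $\epsilon\to 0$ in every term. Using the identities $\u^\epsilon=T_\epsilon^*\u$, $\S^\epsilon=T_\epsilon^*\S T_\epsilon^{-1,*}$, $\Pi^\epsilon=T_\epsilon^*\Pi$ and $\u^{\natural,\epsilon}=T_\epsilon^*\u^\natural$, this is equivalent to the identity
\[
\langle\delta\u_{st},T_\epsilon\Phi\rangle
=\langle\delta\Pi_{st},T_\epsilon\Phi\rangle
+\langle\u_s,(S^*_{st}+\SS^*_{st})T_\epsilon\Phi\rangle
+\langle\u^\natural_{st},T_\epsilon\Phi\rangle,
\]
where $T_\epsilon\Phi(x,y)=(2\epsilon)^{-d}\phi(\tfrac{x+y}{2})\psi(\tfrac{x-y}{2\epsilon})$ is, up to the factor $\phi(\tfrac{x+y}{2})$, a mollifier in the relative coordinate that concentrates on the diagonal $x=y$.

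For the first two terms I would use the change of variables $(x_+,x_-)=\chi(x,y)$ from \eqref{new_coordinates} followed by the rescaling $x_-=\epsilon z$: the $L^2$-continuity of translations \eqref{prop:translation_1} and dominated convergence give $\langle\delta\u_{st},T_\epsilon\Phi\rangle\to\langle\delta u^2_{st},\phi\rangle$, while re-using the computations of Proposition \ref{pro:drift} specialized to the product form $\check\Phi(x_+,x_-)=\phi(x_+)\psi(x_-)$ yields $\T_a^1+\T_a^2\to-2\int a^{ij}\partial_j u\,\partial_i(u\phi)\,dx$ (by symmetry of $a^{ij}$), $\T_b^1+\T_b^2\to\int b^i\partial_i(u^2)\phi\,dx$, and $\T_c^1+\T_c^2\to 2\int c u^2\phi\,dx$, so that $\langle\delta\Pi_{st},T_\epsilon\Phi\rangle$ converges to the drift part of \eqref{eq:h}.

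For the rough driver contribution, the key tool is the geometric factorization \eqref{SS_symmetric_2} combined with the conjugation formula \eqref{Gamma_sym}: the conjugated operator $T_\epsilon^{-1}(\Gamma^k_x+\Gamma^k_y)T_\epsilon=-\m\sigma^k\cdot\nabla_+-\df\sigma^k\cdot\nabla_-+2\m(-\div\sigma^k+\nu^k)$ is, uniformly in $\epsilon$, a bounded first-order operator thanks to $|\df\sigma^k|\leq|\sigma|_{W^{1,\infty}}$ (Lemma \ref{lem:finite-difference}), so that $\langle\u_s,S^*_{st}T_\epsilon\Phi\rangle=Z^k_{st}\langle T_\epsilon^*\u_s,T_\epsilon^{-1}(\Gamma^k_x+\Gamma^k_y)T_\epsilon\Phi\rangle$ and, iterating via \eqref{SS_symmetric_2}, $\langle\u_s,\SS^*_{st}T_\epsilon\Phi\rangle=\ZZ^{k\ell}_{st}\langle T_\epsilon^*\u_s,T_\epsilon^{-1}(\Gamma^\ell_x+\Gamma^\ell_y)T_\epsilon\,T_\epsilon^{-1}(\Gamma^k_x+\Gamma^k_y)T_\epsilon\Phi\rangle$, both uniformly bounded in $\epsilon$. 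Passing to the pointwise limit $\m\to\id$ and $\df\sigma^k\to x_-\cdot\nabla\sigma^k$ by Lemma \ref{lem:finite-difference}, and identifying the limit $T_\epsilon^*\u_s\to u^2_s\cdot\delta_{\{x=y\}}$ via the same change of variables as above, the two $\nu^k$-contributions arising from the $x$- and $y$-differentiations coincide at the diagonal and combine into $\hat\nu^k=2\nu^k$, which yields $\langle u^2_s,\hat B^*_{st}\phi+\hat\BB^*_{st}\phi\rangle$.

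Finally, Proposition \ref{pro:apriori} applied to \eqref{eq:U_epsilon} with the $\epsilon$-uniform controls $\omega_S$ (Proposition \ref{pro:renormalizable}) and $\omega_\Pi$ (Proposition \ref{pro:drift}), combined with $\sup_r\nnn{\u^\epsilon_r}{0}\leq C\sup_r|u_r|^2_{L^2}$ from \eqref{strict_embedding}, yields the uniform bound $\nnn{\u^{\natural,\epsilon}_{st}}{-3}\leq C\bigl(\sup_r|u_r|^2_{L^2}\omega_B(s,t)^{3\alpha}+\omega_\Pi(s,t)\omega_B(s,t)^{\alpha}\bigr)$. Since the three explicit terms converge, so does $\langle\u^{\natural,\epsilon}_{st},\Phi\rangle=\langle\u^\natural_{st},T_\epsilon\Phi\rangle$, and its limit \emph{defines} $\langle u^{2,\natural}_{st},\phi\rangle$ via \eqref{eq:h}; the bound $\nnn{\Phi}{3}\leq C|\phi|_{W^{3,\infty}}$ passes through the limit, placing $u^{2,\natural}$ in $\V^{3\alpha}_{2,\mathrm{loc}}(I;(W^{3,\infty})^*)\subset\V^{1+}_{2,\mathrm{loc}}(I;(W^{3,\infty})^*)$ since $\alpha>1/3$. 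I expect the main obstacle to be the treatment of the $\SS^*$ contribution: a naive integration by parts would require two derivatives of $u$ (whereas $u$ is only $W^{1,2}$), while a naive differentiation of $T_\epsilon\Phi$ produces $\epsilon^{-2}$-blow-ups in the $x_-$ direction; both difficulties are resolved simultaneously by the geometricity identity \eqref{SS_symmetric_2}, which expresses $\SS^*$ as a composition of two first-order symmetric operators whose individual $\epsilon^{-1}$-singularities are compensated by finite differences of the form $\sigma(x)-\sigma(y)=O(\epsilon|x_-|)$, exactly as exploited in Proposition \ref{pro:renormalizable}.
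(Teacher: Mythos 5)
Your overall architecture matches the paper's: test the tensorized equation against $T_\epsilon \Phi$ with $\Phi$ as in \eqref{Phi_R}, pass to the limit in the left-hand side and in $\delta\Pi$ via the change of variables and \eqref{prop:translation_1}, bound $\u^{\natural,\epsilon}$ uniformly in $\epsilon$ by Propositions \ref{pro:renormalizable}, \ref{pro:drift} and \ref{pro:apriori} together with \eqref{strict_embedding}, and let the limit of the remainder pairing \emph{define} $u^{2,\natural}$. The divergence is in the driver step, and there your argument has a genuine gap: you propose to compute the limit of $\langle T_\epsilon^*\u_s,\,T_\epsilon^{-1}(\Gamma^k_x+\Gamma^k_y)T_\epsilon\Phi\rangle$ (and of the composed operator for $\SS$) directly from \eqref{Gamma_sym}, but the identification of the limit is only asserted, and the two heuristics you invoke do not carry it. First, $T_\epsilon^*\u_s$ does \emph{not} converge to $u^2_s\cdot\delta_{\{x=y\}}$: in the blown-up coordinates it equals (up to a constant) $u_s(x_++\epsilon x_-)u_s(x_+-\epsilon x_-)$ and converges to $u_s^2(x_+)$ spread uniformly over $x_-\in B_1$. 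Precisely because of this, the term $-\df\sigma^{k}\cdot\nabla_-\check\Phi=-\df\sigma^{ki}\,\phi\,\partial_i\psi(x_-)$ does \emph{not} disappear in the limit: by Lemma \ref{lem:finite-difference} it converges to $-(x_-\cdot\nabla)\sigma^{ki}\,\phi\,\partial_i\psi$, and after the $x_-$-integration, using $\int_{B_1}x_-^j\partial_i\psi\,\d x_-=-\delta_{ij}$, it contributes $+\div\sigma^k\,\phi$ — exactly what corrects the $-2\div\sigma^k\,\phi$ coming from $2\m(-\div\sigma^k+\nu^k)$ down to the single $-\div\sigma^k\,\phi$ in $\hat B^{*}_{st}\phi=Z^k_{st}\big(-\partial_i(\sigma^{ki}\phi)+2\nu^k\phi\big)$. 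Your sketch (``the two $\nu^k$-contributions combine into $2\nu^k$'') handles the easy zeroth-order bookkeeping but misses this mechanism; carried out naively it yields the wrong coefficient of $\div\sigma^k$. For $\SS^{\epsilon,*}$ the same issue recurs at second order: one must track first and second moments of $\psi$ and its derivatives produced by the products of $\m$- and $\df$-terms and verify that they reassemble into $\hat\BB^{*}_{st}\phi$ of \eqref{nota:formal_adj}; nothing in the proposal does this.

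Your route can in principle be completed — it is a finite, if tedious, moment computation, and it is legitimate because all derivatives fall on the test function and never on $u$ — but the paper sidesteps it entirely. In its Step 3 one splits $u=v+w$ with $v$ smooth, $|w|_{L^2}\leq\delta$: for the smooth tensor $v\otimes v$ the pairing with $S^*T_\epsilon\Phi$ and $\SS^*T_\epsilon\Phi$ involves genuine functions and $(2\epsilon)^{-d}\psi(\tfrac{x-y}{2\epsilon})$ acts as a plain approximation of the identity, so dominated convergence gives $\langle\hat B(v^2),\phi\rangle$ and $\langle\hat\BB(v^2),\phi\rangle$ with no conjugation formulas; the contribution of $\w$ is then $O(\delta)$ uniformly in $\epsilon$ by the renormalizability bounds of Proposition \ref{pro:renormalizable}, and $\delta$ is arbitrary. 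Either complete the moment calculation sketched above or adopt this smooth-plus-small decomposition; as written, the identification of the limit of the driver terms — the heart of the lemma — is not proved. (Minor additional imprecision: the uniform bound gives $u^{2,\natural}\in\V^{1+}_{2,\rm{loc}}$, not $\V^{3\alpha}_{2,\rm{loc}}$, since the term $\omega_\Pi\omega_B^{\alpha}$ has homogeneity $1+\alpha$ rather than $3\alpha$.)
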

\begin{proof}
Recall that,
by definition of $\u^{\natural,\epsilon }:$
\[
\langle\delta \u _{st},T_\epsilon \Phi  \rangle
=
\langle\delta \Pi_{st},T_\epsilon \Phi \rangle
+\langle S _{st}\u_s,T_\epsilon \Phi \rangle
+\langle\SS _{st}\u_s,T_\epsilon \Phi \rangle
+\langle \u^{\natural,\epsilon }_{st},\Phi \rangle\,,
\]
where, gathering the terms in \eqref{Ta}, \eqref{Tb}, \eqref{Tc}, it holds:
\[
\begin{aligned}
\langle\delta \Pi_{st},T_\epsilon \Phi \rangle
&=\iiint\limits_{[s,t]\times B_1\times\O}\Big[
-\tt_{-\epsilon x_-}(a^{ij}\partial _ju)(\tt_{\epsilon x_-}\partial _iu)(x_+)\phi (x_+)\psi (x_-)
\\[-1.4em]
&\quad \quad \quad \quad \quad \quad \quad \quad 
-\tt_{-\epsilon x_-}(a^{ij}\partial _ju)(\tt_{\epsilon x_-}u)(x_+)\partial _i\phi (x_+)\psi (x_-)
\\
&\quad \quad \quad \quad \quad \quad \quad \quad 
-\tt_{\epsilon x_-}(a^{ij}\partial _ju)(\tt_{-\epsilon x_-}\partial _iu)(x_+)\phi (x_+)\psi (x_-)
\\
&\quad \quad \quad \quad \quad \quad \quad \quad 
-\tt_{\epsilon x_-}(a^{ij}\partial _ju)(\tt_{-\epsilon x_-}u)(x_+)\partial _i\phi (x_+)\psi (x_-)
\\
&\quad\quad \quad \quad \quad \quad \quad \quad 
+\tt_{-\epsilon x_-}(b^i\partial _iu)(x_+)\tt_{\epsilon x_-}u(x_+)\phi (x_+)\psi (x_-)
\\
&\quad \quad \quad \quad \quad \quad \quad \quad  
+\tt_{\epsilon x_-}(b^i\partial _iu)(x_+)\tt_{-\epsilon x_-}u(x_+)\phi (x_+)\psi (x_-)
\\
& \quad \quad \quad \quad \quad \quad \quad \quad 
+\tt_{-\epsilon x_-}(cu)(x_+)\tt_{\epsilon x_-}u(x_+)\phi (x_+)\psi (x_-)
\\
&
\quad \quad \quad \quad \quad \quad \quad \quad 
+\tt_{\epsilon x_-}(cu)(x_+)\tt_{-\epsilon x_-}u(x_+)\phi (x_+)\psi (x_-)
\Big]\d x_+\d x_-\d r
\\
&=:\sum\nolimits_{i=1}^{8}\T^i
\,.
\end{aligned}
\]

\item[\indent\textit{Step 1: convergence of the drift.}]
Property \eqref{prop:translation_1},
Assumption \ref{ass:a} and the dominated convergence theorem imply
\[
\begin{aligned}
\T^1+\T^3\to&-2\int_{B_1} \psi (x_-)\Big(\iint_{\stO}a^{ij}(x_+)\partial_j u(x_+)\partial _iu(x_+)\phi (x_+)\d x_+\d r\Big)\d x_-
\\
&\quad \quad \quad \quad \quad 
\equiv -2\int_s^t\left\langle a^{ij}\partial _ju,\partial _iu\phi \right\rangle\d r\,,
\end{aligned}
\]
since $\int_{B_1}\psi\d x_-=1.$
Likewise, it holds
$\T^2+\T^4\to-2\int_s^t\left\langle a^{ij}\partial _ju,u\partial _i\phi \right\rangle\d r.$

Now, because of \eqref{prop:translation_1},
it follows that
\[
\T^5+\T^6\to2\int_s^t\langle b^i\partial _iu,u\phi  \rangle\d r\,,
\quad \text{and}\quad 
\T^7+\T^8\to2\int_s^t\langle cu^2,\phi \rangle\d r\,.
\]
Summing all the terms above, we end up with the claimed convergence.

\item[\indent\textit{Step 2: Convergence of the left hand side.}]
We have:
\[
\begin{aligned}
\langle\delta \u_{st},T_\epsilon \Phi \rangle
&=\iint_{\Omega _\epsilon } \delta u_{st}(x)\big(\frac{u_s(y)+u_t(y)}{2}\big)T_\epsilon \Phi \d x\d y
\\
&\quad \quad 
+\iint_{\Omega _\epsilon } \delta u_{st}(y)\big(\frac{u_s(x)+u_t(x)}{2}\big)T_\epsilon \Phi \d x\d y
\\
&=\iint_{B_1\times\O}\tt_{-\epsilon x_-}\delta u_{st}(x_+)\tt_{\epsilon x_-}\big(\frac{u_s+u_t}{2}\big)(x_+)\phi(x_+)\psi (x_-) \d x_+\d x_-
\\
&\quad \quad 
+\iint_{B_1\times\O} \tt_{\epsilon x_-}\delta u_{st}(x_+)\tt_{-\epsilon x_-}\big(\frac{u_s+u_t}{2}\big)(x_+)\phi(x_+)\psi (x_-) \d x_+\d x_-\,.
\end{aligned}
\]
Using again the strong continuity of $(\tt_a)_{a\in\O}$ in $L^2,$ it holds 
\begin{equation}
\begin{aligned}
\langle\delta \u_{st},T_\epsilon \Phi \rangle 
\to
&\iint_{B_1\times\O}\psi (x_-) \delta u_{st}(x_+)\big(\frac{u_s+u_t}{2}\big)(x_+)\phi(x_+)\d x_+\d x_-
\\
&\quad \quad 
+\iint_{B_1\times\O} \psi (x_-)\delta u_{st}(x_+)\big(\frac{u_s+u_t}{2}\big)(x_+)\phi(x_+)\d x_+\d x_-
\\
&\equiv \langle\delta (u^2)_{st},\phi  \rangle\,.
\end{aligned}
\end{equation}

\item[\indent\textit{Step 3: convergence the driver.}]
Let $1>\delta >0.$
Since $C^\infty(\O)$ is dense in $L^2(\O),$
we can write $u=v+w$ where $v\in C^\infty$ is such that $|v|_{L^2}\leq 2|u|_{L^2}$ and $|w|_{L^2}\leq\delta .$
Hence for every $\delta >0,$ we have
\begin{multline}
\u=\v+\w\,,\quad\text{where}\enskip \v\equiv v\otimes v\in C^\infty(\OO)
\\
\text{and}\quad |\w|_{L^1(\OO)}\equiv|v\otimes w+w\otimes v+w\otimes w|_{L^1}\leq 4|u|_{L^2}\delta +\delta ^2 \leq C\delta \,.
\end{multline}
where we use \eqref{T_star_u}.
Since $\epsilon ^{-d}\psi (\frac{x_-}{\epsilon })$ approximates the identity, changing variables as before and then using dominated convergence, we have
\[
\langle S\v,T_\epsilon \Phi \rangle
\equiv\iint_{\OO}\big(Bv(x)v(y)+v(x)Bv(y)\big)\phi (\tfrac{x+y}{2}) (2\epsilon )^{-d}\psi (\tfrac{x-y}{2\epsilon })\d x\d y
\to\langle \hat B(v^2),\phi \rangle\,,
\]
and also
\begin{multline*}
\langle \SS\v,T_\epsilon \Phi \rangle
\equiv\iint_{\OO}\big(\BB v(x)v(y)+Bv(x)Bv(y) +v(x)\BB v(y)\big)\phi (\tfrac{x+y}{2})(2\epsilon )^{-d}\psi (\tfrac{x-y}{2\epsilon })\d x\d y
\\
\to\langle \hat \BB(v^2),\phi \rangle\,.
\end{multline*}
Using Proposition \ref{pro:renormalizable}, we have
\[
\begin{split}
\limsup_{\epsilon \to0}\langle S\w,  T_\epsilon \Phi \rangle
&\equiv \limsup_{\epsilon \to0}\langle T_\epsilon ^*\w, T_\epsilon ^{-1}S^*  T_\epsilon \Phi \rangle\\
&\leq C(|v|_{L^2}|w|_{L^2}+|w|_{L^2}^2)|\phi |_{W^{1,\infty}}\delta \leq C'|\phi |_{W^{1,\infty}}\delta \,.
\end{split}
\]
Similarly:
\[
\limsup_{\epsilon \to0}\langle \SS\w, T_\epsilon \Phi \rangle
\equiv \limsup_{\epsilon \to0}\langle T_\epsilon ^*\w, T_\epsilon ^{-1}\SS^*  T_\epsilon \Phi \rangle
\leq C|\phi |_{W^{2,\infty}}\delta \,.
\]
Since $\delta >0$ is arbitrary, we conclude that 
\begin{equation}
\lim_{\epsilon \to0}\langle S\u,T_\epsilon \Phi\rangle
=\langle \hat B (u^2),\phi \rangle\,.
\end{equation}
and
\begin{equation}
\begin{aligned}
\lim_{\epsilon \to0}\langle\SS \u ,T_\epsilon \Phi \rangle 
=\langle \hat\BB(u^2),\phi \rangle\,.
\end{aligned}
\end{equation} 

\item[\indent\textit{Conclusion.}]
By \eqref{bound:U_natural}-\eqref{strict_embedding} we have the following estimate, for $(s,t)\in\Delta :$
\[
\langle \u^{\natural,\epsilon }_{st},\Phi \rangle
\leq C\left(\|u\|_{\infty,2}^2\omega _Z(s,t)^{3\alpha }+\omega _\mu  (s,t)\omega_Z (s,t)^{\alpha}\right)|\phi |_{W^{3,\infty}}\,.
\]
From the Banach-Alaoglu theorem,
and since the other terms in the equation converge,
we see that for each $(s,t)\in\Delta ,$ there exists a linear functional $u^{2,\natural}_{st} \in (W^{3,\infty})^*,$ such that
\[
\langle \u^{\natural,\epsilon }_{st} ,\Phi \rangle
\to
\langle u^{2,\natural}_{st},\phi \rangle\,.
\]
for every $\phi $ in $W^{3,\infty}.$
From \eqref{bound:U_natural} we see that 
$u^{2,\natural}$ belongs to $\V^{1+}_{2,\rm{loc}}(I;(W^{3,\infty})^*),$
proving therefore that \eqref{eq:h} is fulfilled.
\end{proof}
We can now establish uniqueness of weak solutions in $\Bc.$
\begin{proof}[Proof of Theorem \ref{thm:solvability}, uniqueness part]
Testing \eqref{eq:h} against
$\phi :=1\in W^{3,\infty},$ and proceeding as in Section \ref{sec:energy}, we see from the Rough Gronwall Lemma 
that every weak solution to \eqref{zakai} in the sense of Definition \ref{def:weak_sol} satisfies
\begin{align*}
&\|u\|_{C([0,T];L^2)}^2+\min(1,m)\int_0^T|\nabla u_r|_{L^2}^2\d r\\
&\qquad\leq C\Big(\omega _Z,|\sigma|_{W^{3,\infty}},|\nu|_{W^{2,\infty}},M,\|b\|_{2r,2q},\|c\|_{r,q},\alpha ,T\Big)|u_0|^2_{L^2}\,.
\end{align*}
which gives \eqref{energy_bound}.
By linearity we deduce that there cannot be more than one weak solution for \eqref{zakai}, hence uniqueness is proven.
\end{proof}
\section{Existence and stability}
\label{sec:continuity}

Finally, we intend to prove existence and stability of weak solutions to \eqref{zakai}. To this end, we approximate the driving signal by smooth paths such that the classical PDE theory applies and yields existence of a unique approximate solution. The results of Section \ref{sec:energy} yield uniform a priori estimates and the passage to the limit then follows from a compactness argument.

Let $z^n:I \to\R^K,n\in\N$, be a sequence of smooth paths. We define their canonical lift by $Z^n=\delta z^n$ and $\ZZ^n_{st}:=\int_s^t \delta z^n_{s r}\otimes \d z_r^n$ and
assume that $\Z^n\equiv(Z^n,\ZZ^n)$ approximates the given rough path $\Z\equiv(Z,\ZZ)$ in the sense that
\begin{equation}
\label{cv:Z}
d_{\mathscr C^\alpha }(\Z^n,\Z)\cv{n \to\infty} 0\,,
\quad \text{see \eqref{RP_metric}.}
\end{equation}
Let 
\begin{equation}
\label{convergences_abc}
\begin{aligned}
&u_0^n\to u_0\enskip \text{in}\enskip L^2\,,
\\
&a^n\to a \enskip\text{in}\enskip L^\infty\,,
\quad\enskip \enskip  \text{with}\quad a^n \in\mathcal {P}_{m,M}\,,
\\
&b^n \to b \enskip\text{in}\enskip L^{2r}(I;L^{2q})\,,
\quad 
c^n \to c \enskip\text{in}\enskip L^r(I;L^q)\,,
\\
&\sigma ^n\to\sigma \enskip\text{in}\enskip W^{3,\infty},
\quad\quad \quad 
\nu ^n\to\nu \enskip\text{in}\enskip W^{2,\infty},
\end{aligned}
\end{equation}
where $\mathcal{P}_{m,M}$ denotes the set of coefficents $a^{ij}\in L^\infty(I\times\O)$ such that Assumption \ref{ass:a} holds,
and let 
\[
A^n:=\partial _j\big(a^{n;ij}\partial _j\cdot \big)+b^{n;i}\partial _i+c^{n}\,,
\]
\[
B^n:=Z^{n;k}(\sigma ^{n;ki}\partial _i + \nu ^{n;k})\,,\quad 
\BB^n:=\ZZ^{n;k\ell}(\sigma ^{n;ki}\partial _i + \nu ^{n;k})(\sigma ^{n;\ell j}\partial _j + \nu ^{n;\ell } )\,.
\]
We can assume without loss of generality that
uniformly in $n:$
\begin{multline}\label{uniform:abc}
|u_0^n|_{L^2}+
\|a^n\|_{\infty,\infty}+
\|b^n\|_{2r,2q}+
\|c^n\|_{r,q}+
|\sigma ^n|_{W^{3,\infty}}+
|\nu ^n|_{W^{2,\infty}}
\\
\leq
1+
|u_0|_{L^2}+
\|a\|_{\infty,\infty}+
\|b\|_{2r,2q}+
\|c\|_{r,q}+
|\sigma |_{W^{3,\infty}}+
|\nu |_{W^{2,\infty}}\,,
\end{multline}
and that
\begin{equation}
\label{uniform:p-unbounded}
\left[
\begin{aligned}
&|B_{st}^n|_{\L(W^{-k,2},W^{-k-1,2})}\leq \omega _B(s,t)^{\alpha }\,,\quad k\in\{0,1,2\}\\
&|\BB_{st}^n|_{\L(W^{-k,2},W^{-k-2,2})}\leq \omega _B(s,t)^{2\alpha }\,,\quad k\in\{0,1\}\,,\quad  \text{for}\enskip (s,t)\enskip \text{in}\enskip \Delta \,,
\end{aligned}
\right.
\end{equation}
where $\omega _B$ is as in \eqref{nota:omega_B}.

Recall that since $z^n$ is smooth, existence and uniqueness of a weak solution $u^n\in\Bc_{0,T}$ to 
\[
\partial _tu^n=A^nu^n+\dot B^n u^n,\qquad u^n|_{t=0}=u^n_0,
\]
in the sense of distributions,
follows from the classical PDE theory (see the discussion in Section~\ref{ss:main} for more details).
Consequently, by Proposition \ref{pro:energy}, together with \eqref{uniform:abc} and \eqref{uniform:p-unbounded}, the $\Bc_{0,T}$-norm of $u^n$ is uniformly bounded, namely,
\begin{equation}\label{energy_inequality_n}
\|u^n\|^2_{\Bc_{0,T}}=\sup_{0\leq t\leq T}|u_t^n|_{L^2}^2 + \int_0^T|\nabla u^n _r|^2_{L^2}\d r\leq C(1+|u_0|_{L^2}^2)\,.
\end{equation}
Hence the Banach-Alaoglu theorem ensures (up to a subsequence) that
\begin{align}
\label{convergence_2}
&u^n \to  u\quad\text{and}\quad\nabla u^n \to \nabla u\enskip\quad \text{weakly in}\enskip L^2([0,T]\times\O),
\end{align}
and by weak lower semicontinuity of the norm we obtain
\begin{equation}\label{eq:estu}
\|u\|^2_{\Bc_{0,T}}<\infty\,.
\end{equation}
By \eqref{convergence_2} and the strong convergence $\|a^n-a\|_{\infty,\infty}\to0$
it follows that:
\[
\int_s^t\langle -a^{n;ij}_r\partial _ju^n,\partial _i\phi \rangle\d r
\to
\int_s^t\langle -a^{ij}_r\partial _ju,\partial _i\phi \rangle\d r
\]
for each $\phi $ in $W^{1,2}.$
Moreover, using \eqref{convergences_abc} we have
\[
\|(b^n-b)\phi \|_{2,2}\leq \|b^n-b\|_{2r,2q}\|\phi \|_{\frac{2r}{r-2},\frac{2q}{q-2}}\leq \beta \|b^n-b\|_{2r,2q}|\phi |_{W^{1,2}}T^{\frac{r-2}{2r}}\to 0\,,
\]
and similarly
\[
\|(c^n-c)\phi \|_{2,2}\leq \beta \|c^n-c\|_{r,q}|\phi |_{W^{1,2}}T^{\frac{r-2}{2r}}\,
\to 0\,.
\]
As a consequence, using the strong/weak convergence principle, we have also
\[
\int_s^t\langle b^{n;i}\partial _iu^n +c^nu^n,\phi \rangle\d r
\to
\int_s^t\langle b^i\partial _iu +cu,\phi \rangle\d r\,.
\]
The weak convergence obtained above is however not sufficient to take the pointwise limit in time, which is needed in order to pass to the limit on the left hand side of the equation as well as in the rough integral. For that purpose, we will show that the sequence $(u^n)$ satisfies an equicontinuity property in the space $W^{-1,2}.$
\begin{proof}[Proof of uniform equicontinuity]
Using Lemma \ref{lem:drift_existence}, \eqref{borne_controle_lambda}, \eqref{uniform:p-unbounded} and \eqref{energy_inequality_n},
we have the estimate
\begin{equation}
\label{an_estimate}
\begin{aligned}
\n{\int_{s}^tA^nu^n\d r}{-1}
&\leq \omega _n(s,t)\equiv(t-s)^{1/2}\uu_n(s,t)^{1/2}+\bb_n(s,t)^{1/(2r)}\aa_n(s,t)^{1/2}(t-s)^{\frac{r-1}{2r}}
\\
&\quad \quad +\cc_n(s,t)^{1/r}\uu_n(s,t)^{\frac{r-1}{2r}}(t-s)^{\frac{r-1}{2r}}
\end{aligned}
\end{equation}
where we adapt the notations \eqref{controls_abc} in an obvious way.

Moreover, from similar computations as that of 
Corollary \ref{cor:u_natural} (the proof is left to the reader)
we see that $u^n$ is a weak solution of 
\[
\d u^n= A^nu^n\d t+ \d \B^n u^n\,,
\]
in the sense of Definition \ref{def:weak_sol}, with respect to the scale $(W^{k,2})_{k\in\N}.$ Namely:
\begin{equation}
\label{equation_n}
\langle\delta u^n_{st},\phi  \rangle = \int_s^t\langle A^n u^n,\phi \rangle\d r + \langle B^n_{st} u^n_s,\phi \rangle +\langle \BB^n_{st} u^n_s,\phi \rangle +\langle u^{\natural,n}_{st},\phi \rangle
\end{equation}
for each $\phi $ in $W^{3,\infty},$ and $(s,t)\in\Delta .$
Applying Proposition \ref{pro:apriori} (more specifically using \eqref{bounds:gubinelli2}), we have the bound
\begin{equation}\label{bound:u_natural_n}
\begin{aligned}
\n{\delta u^n_{st}}{-1}
\leq
C\left(\omega _n(s,t)+\omega _n(s,t)^\alpha +\omega _B(s,t)^\alpha \right).
\end{aligned}
\end{equation}

Now, recall that 
$\aa_n(s,t)\leq C(1+2M\| u\|_{\Bc_{0,T}})\leq C_1,$ and, by \eqref{consequence_rq}, that $\uu_n(s,t)\leq C\|u\|_{\Bc_{0,T}}\leq C_2.$
Using moreover \eqref{convergences_abc}, the controls $\bb_n$ and $\cc_n$ are equicontinuous in the sense that
for each $\epsilon >0$ there exist $\delta >0$ such that
\[
|s-t|\leq \delta(\epsilon) \quad 
\Longrightarrow
\quad 
\bb_n(s,t)+\cc_n(s,t)\leq \frac{\epsilon ^2}{\max(C_1,C_2)^2}\,.
\]
Letting
$\delta ' \leq \min(\delta (\epsilon ),\epsilon ^2)$ and substituting in \eqref{an_estimate} we see that 
\[
\omega _n(s,t)\leq \epsilon \,,\quad \text{for all}\quad n\in \N\,,\quad \text{provided}\enskip |t-s|\leq \delta '\,.
\]
which shows uniform equicontinuity for $\omega _n,n\geq 0.$
By \eqref{bound:u_natural_n}, the same property holds for $\n{\delta u^n_{st}}{-1},$
hence uniform equicontinuity in $W^{-1,2}$ is proved.
\end{proof}

Thanks to the compact embedding
\[
L^2(\O)\hookrightarrow W^{-1,2}_{\mathrm{loc}}(\O)\,,
\]
the bound \eqref{energy_inequality_n}
shows that $(u_s^n)_{n\in\N}$ has a compact closure for each $s$ in $I.$
Using equicontinuity, a well-known infinite-dimensional version of Ascoli Theorem (we refer, e.g.\ to \cite{kelley1975general}) ensures that, up to a subsequence:
\begin{equation}
\label{pointwise_cv}
u_s^n\to u_s\quad \text{in}\enskip W_{\mathrm{loc}}^{-1,2}(\O)\enskip \text{uniformly for }\enskip s\in I\,.
\end{equation}
By \eqref{convergence_2}, \eqref{pointwise_cv}, 
fixing a compactly supported $\phi$ in $W^{1,2}(\O),$ we have for every $(s,t)\in\Delta :$
\[
\langle u_t^n-u_s^n,\phi \rangle\to \langle u_t-u_s,\phi \rangle\,.
\]
Furthermore, by \eqref{assumption_sigma_nu},
for each $\phi \in W^{3,2}$ with compact support, we have for each $k,\ell \in\{1,\dots,K\}:$
\begin{equation}
\label{finiteness}
\n{(\sigma^k \cdot \nabla )^*\phi}{1}\,,\enskip 
\n{(\sigma^k \cdot \nabla )^*(\sigma^\ell  \cdot \nabla )^*\phi}{1}\,,\enskip 
\n{\nu(\sigma^k \cdot \nabla )^*\phi}{1}\,,\enskip 
\n{(\sigma^k \cdot \nabla )^*(\nu\phi)}{1}\enskip 
<\infty\,.
\end{equation} 
Hence, using \eqref{pointwise_cv}, \eqref{cv:Z} and \eqref{finiteness} we see that
\[
\langle u^n_s ,B_{st}^{n,*}\phi \rangle\to
\langle u_s ,B_{st}^{*}\phi \rangle
\quad \text{while}\quad 
\langle u^n_s ,\BB_{st}^{n,*}\phi \rangle\to
\langle u_s ,\BB_{st}^{*}\phi \rangle\,.
\]
Using in addition the estimate \eqref{estimate_remainder},
we can take the limit in \eqref{equation_n}, so that $u$ satisfies the corresponding weak formulation of \eqref{zakai} for every compactly supported test function in $W^{3,2}$. Due to the energy bound \eqref{eq:estu} we may then relax the assumptions on the test function $\phi$ and deduce that $u$ is indeed a weak solution of \eqref{zakai}, with respect to the scale $(W^{k,2})_{k\in\N}.$ 

Therefore the existence part of Theorem \ref{thm:solvability} follows. It was already shown in Section \ref{sec:uniqueness} that the weak solution $u\equiv \mathfrak S(u_0,a,b,c,\sigma ,\nu ,\Z)$ is unique. In addition, due to our construction, every subsequence of $(u^{n})_{n\in\N}$ contains a further subsequence which  converges towards the same limit $\mathfrak S(u_0,a,b,c,\sigma ,\nu ,\Z).$ Hence we deduce that the original sequence $(u^n)_{n\in\N}$  converges.
Moreover, thanks to \eqref{convergence_2}, \eqref{pointwise_cv},
continuity of $\mathfrak S$ holds with respect to each of its variables. Indeed, it is enough to observe that the above proof remains applicable if $\Z^n$ is not necessarily a smooth approximation of $\Z$ in $\mathscr C_g.$
This completes the proof of the Theorem \ref{thm:solvability} and Theorem \ref{thm:continuity}.
\hfill\qed

\appendix

\section{Auxiliary results}
\subsection{Convergence of finite-difference approximations}
\label{app:finite_difference}
Recall \eqref{nota:finite_difference}.
We have the following.
\begin{lemma}
\label{lem:finite-difference}
Let $1\leq p<\infty.$
and fix $a$ in $\O.$
We have for every $\varphi \in W^{1,\infty}(\R^d):$
\[
|\boldsymbol\Delta _\epsilon ^a\varphi |_{L^\infty}\leq |a||\nabla \varphi |_{L^\infty}\,.
\]
Moreover, as $\epsilon $ goes to $0,$ we have
\[
\boldsymbol\Delta _{\epsilon}^a\varphi \to a\cdot \nabla \varphi \quad \text{strongly in}\quad L^p(\O)\,,
\]
provided
\begin{itemize}
 \item either $p<\infty$ and $\varphi \in W^{1,p}$;
 \item or $p=\infty$ and $\varphi \in W^{2,\infty}.$
\end{itemize}
\end{lemma}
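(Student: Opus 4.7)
The key observation is that $\boldsymbol\Delta_\epsilon^a\varphi$ admits the integral representation
\begin{equation}\label{eq:integral_rep_fd}
\boldsymbol\Delta_\epsilon^a\varphi(x)\;=\;\frac{1}{2\epsilon}\int_{-\epsilon}^{\epsilon} a\cdot\nabla\varphi(x+sa)\,\d s\,,
\end{equation}
which holds pointwise for $\varphi\in C^1$ by the fundamental theorem of calculus, and for a.e.\ $x$ when $\varphi\in W^{1,p}_{\mathrm{loc}}$ by the absolute continuity on lines characterization (equivalently, by standard mollification: apply \eqref{eq:integral_rep_fd} to $\varphi*\rho_\delta$ and let $\delta\to 0$). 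The first inequality of the lemma is then immediate, since
\[
|\boldsymbol\Delta_\epsilon^a\varphi(x)|\;\leq\;\frac{|a|}{2\epsilon}\int_{-\epsilon}^\epsilon|\nabla\varphi(x+sa)|\,\d s\;\leq\;|a|\,|\nabla\varphi|_{L^\infty}\,.
\]

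For the convergence statements, I would subtract off the expected limit using \eqref{eq:integral_rep_fd} and write
\begin{equation}\label{eq:fd_diff}
\boldsymbol\Delta_\epsilon^a\varphi(x)-a\cdot\nabla\varphi(x)\;=\;\frac{1}{2\epsilon}\int_{-\epsilon}^\epsilon a\cdot\bigl[\nabla\varphi(x+sa)-\nabla\varphi(x)\bigr]\d s\,.
\end{equation}
In the case $p<\infty$, $\varphi\in W^{1,p}$, Minkowski's integral inequality applied to \eqref{eq:fd_diff} yields
\[
\bigl\|\boldsymbol\Delta_\epsilon^a\varphi-a\cdot\nabla\varphi\bigr\|_{L^p}\;\leq\;|a|\sup_{|s|\leq\epsilon}\bigl\|\tt_{-sa}\nabla\varphi-\nabla\varphi\bigr\|_{L^p}\,,
\]
which tends to zero by the continuity of translations in $L^p$ for finite $p$ (cf.\ \eqref{prop:translation_1}). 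In the case $p=\infty$, $\varphi\in W^{2,\infty}$, the gradient $\nabla\varphi$ is Lipschitz with constant $|\nabla^2\varphi|_{L^\infty}$, so \eqref{eq:fd_diff} gives the pointwise bound
\[
\bigl|\boldsymbol\Delta_\epsilon^a\varphi(x)-a\cdot\nabla\varphi(x)\bigr|\;\leq\;\frac{|a|^2}{2\epsilon}\int_{-\epsilon}^\epsilon|s|\,\d s\;|\nabla^2\varphi|_{L^\infty}\;=\;\frac{|a|^2\epsilon}{2}|\nabla^2\varphi|_{L^\infty}\,,
\]
which converges to zero uniformly in $x$.

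The only genuinely delicate point is justifying \eqref{eq:integral_rep_fd} for merely Sobolev $\varphi$: once it is available, both convergence statements reduce to a one-line argument (continuity of translations for $p<\infty$, Lipschitz estimate for $p=\infty$). This is handled cleanly by approximation --- take $\varphi_\delta:=\varphi*\rho_\delta$, for which \eqref{eq:integral_rep_fd} holds by the classical fundamental theorem of calculus, observe that both sides of \eqref{eq:integral_rep_fd} (with $\varphi$ replaced by $\varphi_\delta$) are continuous in the relevant $L^p_{\mathrm{loc}}$ topology as $\delta\to0$, and pass to the limit.
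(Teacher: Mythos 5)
Your proof is correct and follows essentially the same route as the paper: the integral representation \(\boldsymbol\Delta_\epsilon^a\varphi(x)=a\cdot\int_0^1\nabla\varphi(x+\epsilon(2\theta-1)a)\,\d\theta\) (your formula after the change of variables \(s=\epsilon(2\theta-1)\)), translation continuity in \(L^p\) for \(p<\infty\), and the Lipschitz/second-order Taylor bound giving an \(O(\epsilon)\) rate for \(p=\infty\). The only cosmetic difference is that you pass the norm inside the average via Minkowski's integral inequality and a supremum over \(|s|\leq\epsilon\), where the paper uses Jensen, Fubini and dominated convergence; both are equally valid.
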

\begin{proof}
The first bound
is an easy consequence of Taylor Formula, since for every $a\in\O$
\begin{equation}\label{taylor}
\boldsymbol\Delta _\epsilon ^a\varphi (x)=a\cdot \int_0^1\nabla \varphi \big(x+\epsilon (2\theta -1)a\big)\d \theta\,.
\end{equation}

\textit{Case $p\in[1,\infty).$}
 By Taylor Formula, we have for a.e.\ $x$ in $\O:$
 \[
\boldsymbol\Delta _{\epsilon}^a\varphi -a\cdot \nabla \varphi (x)
=a\cdot \int_0^1\left(\nabla \varphi \big(x+\epsilon(2\theta -1)a\big)-\nabla \varphi (x)\right)\d \theta 
 \]
 whence
 \[
\begin{aligned}
\int_\O|\boldsymbol\Delta _{\epsilon}^a\varphi -a\cdot \nabla \varphi (x)|^p\d x
&\leq|a|^p\int_\O\int_0^1\left|\nabla \varphi \big(x+\epsilon(2\theta -1)a\big)-\nabla \varphi (x)\right|^p\d \theta \d x
\\
&=|a|^p\int_0^1\left(\int_\O\left|\nabla \varphi \big(x+\epsilon(2\theta -1)a\big)-\nabla \varphi (x)\right|^p\d x\right)\d \theta 
\\
&=|a|^p\int_0^1|(\tt_{-\epsilon (2\theta -1)a} -\id)\nabla \varphi (x)|_{L^p}^p\d \theta 
\\
&\to0\,,\quad \quad \quad \text{as}\enskip \epsilon \to0\,,
\end{aligned}
\]
using the strong $L^p$ continuity of $(\tt_a)_{a\in\O}$ when $p\in[1,\infty)$ and dominated convergence.

\item[\indent\textit{Case $p=\infty$.}]
Similarly, we have
\[
\begin{aligned}
|\boldsymbol\Delta _{\epsilon}^a\varphi -a\cdot \nabla \varphi|_{L^\infty}
&\leq \int_0^1\sup_{x\in\O}|(\tt_{-\epsilon (2\theta -1)a} -\id)\nabla \varphi (x)|\d \theta 
\\
&\leq \int_0^1\epsilon|2\theta -1| \sup_{x\in\O}\int_0^1|\nabla ^2\varphi\big(x +\theta '\epsilon (2\theta -1)a  \big) |\d \theta' \d \theta 
\\
&\leq C\epsilon |\varphi  |_{W^{2,\infty}}
\to0\,,
\quad \quad \text{as}\enskip \epsilon \to0\,,
\end{aligned}
\]
which proves the lemma.
\end{proof}
\subsection{The Sewing Lemma}\label{app:A1}
A proof of the following classical result, for the case where $E$ is a (finite-dimensional) normed vector space, can be found e.g.\ in \cite{gubinelli2004controlling,gubinelli2010rough}, the Banach space case being treated e.g.\ in \cite{friz2014course}. The result appears to have an immediate extension to the case of a complete locally convex topological vector space $E$ (l.c.v.s.), which is a repeatedly encountered scenario in PDE theory (see Remark \ref{exa:distributions} below).

As before, we set  $I:=[0,T],$ for some $T>0,$ and $\Delta \equiv\Delta _I,$ $\Delta ^2\equiv\Delta ^2_{I}$ to be the corresponding simplexes.
Given a l.c.v.s.\ $E$ equipped with a family of seminorms $(p_\gamma)_{\gamma\in \Gamma}$, and $a >0$ we define the space $\V_ 1^{a }(I;E)$ as the set of paths $h:I\to E$ such that for every $\gamma\in\Gamma$ and every $(s,t)\in\Delta ,$ there holds  $p_\gamma\big(\delta h_{st}\big)\leq \omega_{h,\gamma} (s,t)^{a }$  for $(s,t)\in\Delta ,$ for some control depending on $h$ and $\gamma.$ Note that $\V^{a}_1(I;E)$ is also a locally convex topological vector space given by the seminorms
\[
h\mapsto \sup_{\pp\in\mathscr{P}(I)}\left(\sum\nolimits_{(\pp)}p_\gamma(\delta h_{t_i t_{i+1}}) ^{1/a}\right)^a,\qquad \gamma\in\Gamma\,,
\]
(see \eqref{summation_pp}).
The space $\V_ 2^{a }(I;E),$ is defined in a similar fashion.  
Furthermore, $\V^{1+}_2(I;E)$ corresponds to those 2-index maps $g\equiv g_{st}$ such that for each $p_\gamma$ as above, there is a control $\omega _{g,\gamma}$ and $a_\gamma>1$ with $p_\gamma(g_{st})\leq \omega_{g,\gamma} (s,t)^{a_\gamma}$ for $(s,t)\in\Delta .$

\begin{proposition}[Sewing Lemma]\label{pro:sewing}
Let $E$ be a complete, locally convex topological vector space.
Let  $(p_\gamma )_{\gamma\in\Gamma}$ be a family of semi-norms.

Define
$\Zc^{1+}(I;E) $ as the set of $3$-index maps $h:\Delta ^2\to E$ such that
\begin{itemize}
\item there exists a continuous $B:\Delta \to E$ with $h=\delta B;$
\item for each $\gamma\in\Gamma,$ there is a control $\omega _{h,\gamma}:\Delta \to \R_+$ and $a_\gamma  >1,$ such that
\begin{equation}
\label{a_gamma}
p_\gamma \left(h_{s\theta t}\right)\leq \omega _{h,\gamma}(s,t)^{a_\gamma }\,,
\end{equation}
uniformly as $(s,\theta ,t)\in\Delta ^2.$
\end{itemize}

Then, there exists a linear map $\Lambda: \Zc^{1+}(I;E) \to \V_ 2^{1+} (I;E),$
\emph{continuous} in the sense that for every $\gamma \in\Gamma $ and  $h\in \Zc^{1+} (I;E)$ there holds
\begin{equation}\label{est:sew}
p_\gamma \left(\Lambda h_{st}\right)\leq C_{a_\gamma }\omega _{h,\gamma}(s,t)^{a_\gamma }\,,
\quad \text{for every}\quad  (s,t)\in\Delta \,,
\end{equation}
where the above constant only depends on the value of $a_\gamma>1.$
In addition, $\Lambda $ is a right inverse for $\delta ,$ namely
\begin{equation}\label{uniqueness_SL}
\delta\Lambda =\id|_{\Zc^{1+}}\,,
\end{equation}
and it is \emph{unique} in the class of linear mappings fulfilling the properties \eqref{est:sew}-\eqref{uniqueness_SL}.

Finally, for any $(s,t)\in\Delta ,$
we have the explicit formula:
\begin{equation}
 \label{explicit_Lambda}
 \Lambda_{st} h =\lim_{|\pp|\to0}\left(B_{st}-\sum\nolimits_{(\pp)}B_{t_i t_{i+1}}\right)\,,
\end{equation} 
where we use the summation convention \eqref{summation_pp}.
\end{proposition}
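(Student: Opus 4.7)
The argument closely follows the classical Sewing Lemma of Gubinelli \cite{gubinelli2004controlling} (see also \cite{friz2014course} for the Banach-space version); the only adjustment needed here is to track all estimates seminorm-by-seminorm and to invoke completeness of $E$ in its locally convex topology to extract the limit.

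The construction of $\Lambda$ proceeds as follows. Fix $h = \delta B \in \Zc^{1+}(I;E)$ and $(s,t) \in \Delta$, and to each partition $\pi = \{s = t_0 < \dots < t_n = t\}$ attach the defect
\[ M_\pi := B_{st} - \sum_{(\pi)} B_{t_i t_{i+1}}. \]
The key identity is that removing an interior point $t_j$ yields $M_{\pi \setminus \{t_j\}} - M_\pi = h_{t_{j-1}\, t_j\, t_{j+1}}$. For any seminorm $p_\gamma$, superadditivity of the control $\omega_{h,\gamma}$ together with a pigeonhole argument produces, whenever $\pi$ has $n \geq 2$ intervals, an interior index $j$ with $\omega_{h,\gamma}(t_{j-1},t_{j+1}) \leq \tfrac{2}{n-1}\omega_{h,\gamma}(s,t)$. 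Iterating this coarsening $n-1$ times down to the trivial partition $\{s,t\}$ (for which $M_{\{s,t\}} = 0$) and telescoping delivers the uniform bound
\[ p_\gamma(M_\pi) \;\leq\; \sum_{j=1}^{n-1}\Bigl(\tfrac{2}{j}\,\omega_{h,\gamma}(s,t)\Bigr)^{a_\gamma} \;\leq\; 2^{a_\gamma}\,\zeta(a_\gamma)\, \omega_{h,\gamma}(s,t)^{a_\gamma}. \]
Comparing two partitions through their common refinement and applying this bound on each sub-interval then shows that $\{M_\pi\}_\pi$ is Cauchy in every seminorm $p_\gamma$ as $|\pi|\to 0$. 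Completeness of $E$ provides the limit $\Lambda_{st} h := \lim_{|\pi|\to 0} M_\pi$, and passing to the limit in the above display yields both \eqref{est:sew} and the explicit formula \eqref{explicit_Lambda}.

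The identity $\delta \Lambda h = h$ will follow from the decomposition $M^{s,t}_{\pi_1 \cup \pi_2} - M^{s,\theta}_{\pi_1} - M^{\theta,t}_{\pi_2} = h_{s\theta t}$, valid for any concatenation $\pi_1 \cup \pi_2$ of partitions of $[s,\theta]$ and $[\theta,t]$, upon sending $|\pi_1|, |\pi_2| \to 0$. Linearity is immediate from the construction. For uniqueness, given another admissible $\Lambda'$, the difference $D := \Lambda h - \Lambda' h$ lies in $\V^{1+}_2(I;E)$ and satisfies $\delta D = 0$, so iteration yields $D_{st} = \sum_{(\pi)} D_{t_i t_{i+1}}$ for every $\pi$. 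The bound $p_\gamma(D_{st}) \leq \max_i \omega_{D,\gamma}(t_i, t_{i+1})^{a_\gamma - 1}\, \omega_{D,\gamma}(s,t)$ then vanishes as $|\pi|\to 0$ by regularity of $\omega_{D,\gamma}$ and $a_\gamma > 1$, forcing $D = 0$.

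The only point requiring care in the l.c.v.s.\ setting is to verify that the pigeonhole-telescoping argument yields a constant depending only on $a_\gamma$, so that the estimate can be transported uniformly through each seminorm; once this is done, completeness of $E$ handles the existence of the limiting object and no essentially new obstacle arises over the Banach-space case.
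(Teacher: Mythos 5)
Your argument is correct and follows essentially the same route as the paper: the same defect $M_\pi=B_{st}-\sum_{(\pi)}B_{t_it_{i+1}}$, the same pigeonhole coarsening and telescoping to a $2^{a_\gamma}\zeta(a_\gamma)$ maximal bound, and the same comparison via common refinements plus completeness, all tracked seminorm-by-seminorm. The additional explicit verifications of $\delta\Lambda=\id$ and of uniqueness are the standard steps the paper delegates to the cited references, so no discrepancy arises.
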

\begin{example}
\label{exa:distributions}
The above infinite-dimensional Sewing Lemma applies in $\mathscr D'(\mathcal O),$
the space of distributions over an open subset $\mathcal O$ of some Euclidean space, for which a family of semi-norms is provided by 
\[
p_\phi (v):=|\langle v,\phi \rangle|\,,\quad \quad \phi \in C^\infty_c(\mathcal O)\,,
\]
for $v$ in $\mathscr D'(\mathcal O).$

We could replace $\mathscr D'$ by the space of Schwarz distributions $\mathscr S',$
or any Banach space of linear functionals endowed with the weak-* topology.
\end{example}
\begin{proof}
The proof is similar to that of \cite{friz2014course}.
Fix $(s,t)\in\Delta ,$ and consider a partition 
$\pp :=\{s\equiv t_1< t_2<\dots<t_{k}\equiv t\}$ of $[s,t],$ such that $\#\pp=k\geq 2.$
Define
\[
\Lambda ^{\pp}h:=B_{st}-\sum_{1\leq i\leq k-1}B_{t_it_{i+1}}\,,
\]
where $B$ is such that $\delta B=h.$

Let $\gamma\in\Gamma.$
By the superadditivity of $\omega _{h,\gamma},$ there exists $i_1\in\{1,\dots,k\}$ such that
\[
\omega _{h,\gamma} (t_{i_1-1},t_{i_1+1})\leq \frac{2}{k-1}\omega _{h,\gamma}(s,t)\,.
\]
Moreover, we have the relation
\begin{equation}
\label{p_a_relation}
p_\gamma \big(\Lambda^{\pp\setminus\{t_{i_1}\}}h- \Lambda ^{\pp}h\big)=p_\gamma (\delta B_{t_{i-1},t_i,t_{i+1}})\leq  \left(\frac{2}{k-1}\omega _{h,\gamma}(s,t)\right)^{a_\gamma }\,.
\end{equation}
Replacing $\pp$ by $\pp\setminus\{t_{i_1}\},$ we can iterate this procedure until we end up with the trivial partition $\pp\setminus\{t_{i_1},\dots,t_{i_{k-2}}\}\equiv\{s,t\}$ for which $\Lambda ^{\{s,t\}}h=0$ (note that the order in which the points $t_i$ are dropped out may depend on $\gamma $ in $\Gamma$, but this is not a problem since the final expression does not).
Writing that 
\[
\Lambda ^{\pp}h=\left(\Lambda ^{\pp}-\Lambda ^{\pp\setminus\{t_{i_1}\}}\right)h
+\dots +\left(\Lambda ^{\pp\setminus\{t_{i_1},\dots,t_{i_{k-3}}\}}-\Lambda ^{\{s,t\}}\right)h\,,
\]
and using \eqref{p_a_relation} $k-2$ times,
we find the maximal inequality
\begin{equation}
\label{inegalite_zeta_function}
p_\gamma \left(\Lambda ^{\pp}h\right)\leq 2^{a_\gamma }\omega _{h,\gamma}(s,t)^{a_\gamma }\sum_{i=1}^{k-2}i^{-a_\gamma }
\leq 2^{a_\gamma }\omega _{h,\gamma}(s,t)^{a_\gamma }\sum_{i=1}^{\infty}i^{-a_\gamma }
\leq C_{a_\gamma }\omega _{h,\gamma}(s,t)^{a_\gamma }\,,
\end{equation}
and this holds for every $\gamma $ in $\Gamma.$

Now, let us consider a refined partition $\pp'\subset \pp.$
We have
\[
\Lambda ^{\pp}h -\Lambda ^{\pp'}h=-\sum _{t_i\in \pp,\, i<k}\Bigg( \underbrace{B_{t_i t_{i+1}} - \sum _{\{\tau,\tilde\tau\}\subset\pp' \cap [t_i,t_{i+1}],\,\tau<\tilde\tau}B_{\tau\tilde\tau}}_{\Lambda ^{\pp'\cap[t_i,t_{i+1}]}h}\Bigg)
\]
whence, using the maximal inequality \eqref{inegalite_zeta_function}
on each $[t_i,t_{i+1}],$ there comes:
\[
p_\gamma \Big(\Lambda ^{\pp}h -\Lambda ^{\pp'}h\Big)\leq
\sum _{t_i\in \pp,\, i<k}C_a \omega _{h,\gamma} (t_i,t_{i+1})^{a_\gamma }\,.
\]
Since $a_\gamma >1$, the r.h.s.\ above vanishes as the size of $\pp$ goes to $0,$
which by completeness of $E$ shows the convergence of $\Lambda ^\pp h$ towards some $\Lambda _{st}h,$ for any $(s,t)\in\Delta .$

Finally, one can follow the lines of \emph{Step 4} in \cite[Proposition 2.3]{gubinelli2010rough} to show that we have $\delta \Lambda h=h$. This completes the proof.
\end{proof}
\begin{cordef}\label{cor_def}
Given $\alpha \in(0,1],$ let $B$ in $\V_ 2^{\alpha }(I;E)$ and assume that $\delta B \in \Zc^{1+}.$
Define
\begin{equation}
\label{def:rough_integral}
\I(B):= B - \Lambda \delta B\,\in \V_ 2^{\alpha }(I;E)\,.
\end{equation}

Then, the linear map $\I:\V_ 2^{\alpha }(I;E)\to \V_ 2^{\alpha }(I;E),\,B\mapsto \I(B)$ fulfills the following properties
\begin{itemize}
 \item $\delta \I =0;$
 \item if $h\in \V_ 2^{\alpha }(I;E)$ is another 2-index map such that $\delta h=0$ and $h-B\in \V^{1+}(I;E),$ then $h=\I(B);$
 \item for any $B$ as above, $\I(B)$ is given by
 \begin{equation}
 \label{explicit_I}
 \I_{st} B =\lim_{|\pp|\to0}\sum\nolimits_{(\pp)}B_{t_i t_{i+1}}\,;
\end{equation} 
 \item let $E$ be a reflexive Banach space, and assume that $f:I\to \L(E,F),$ $g:I\to E$ are measurable, $f$ being continuous, and such that $g$ belongs to $\AC(I;E).$
 Let $\dot g\in L^1(I;E)$ denote the weak derivative of the path $g.$ Assume in addition that $\delta(f\delta g)\in \Zc^{1+}(I;F)$.
 Then, we have $\int_I|f_r\dot g_r|_{F}\d r<\infty$ and
\begin{equation}
\label{bochner_integral}
\I(f\delta g)_{st}=\int_s^tf_r\dot g_r\d r\quad\text{(Bochner integral in $F$)}\,,
\end{equation}
where $f\delta g$ is to be understood as the map $ (s,t)\in \Delta \mapsto f_s\delta g_{st}.$
\end{itemize}

For $B$ as above,
the 2-index map $(s,t)\in\Delta \mapsto\I(B)_{st}$ is called the \emph{rough integral} of $B.$
\end{cordef}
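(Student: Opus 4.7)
Both are immediate consequences of the Sewing Lemma. For (1), applying $\delta$ to the definition $\mathcal I(B) = B - \Lambda\delta B$ yields $\delta \mathcal I(B) = \delta B - \delta\Lambda\delta B = 0$ by \eqref{uniqueness_SL}. For (3), I rewrite $\mathcal I(B)_{st} = B_{st} - \Lambda_{st}(\delta B)$ and apply \eqref{explicit_Lambda} with $h := \delta B$ (whose primitive in the sense of the Sewing Lemma is $B$ itself), obtaining
\[
\mathcal I(B)_{st} = B_{st} - \Big(B_{st} - \lim_{|\pp|\to0}\tsum_{(\pp)}B_{t_it_{i+1}}\Big) = \lim_{|\pp|\to0}\tsum_{(\pp)} B_{t_it_{i+1}}.
\]

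\textbf{Plan for (2).} Set $k := \mathcal I(B) - h$. Then $\delta k = 0$ by (1) and the assumption $\delta h = 0$, and $k = (B-h) - \Lambda \delta B$ belongs to $\V_2^{1+}(I;E)$: the first term by hypothesis, the second by the bound \eqref{est:sew} applied to $\delta B \in \mathcal Z^{1+}$. The crux is the auxiliary claim that any $k\in\V_2^{1+}(I;E)$ with $\delta k = 0$ must vanish. Indeed, iterating $k_{st} = k_{s\theta} + k_{\theta t}$ over an arbitrary partition $\pp\subset[s,t]$ yields $k_{st} = \sum_{(\pp)}k_{t_it_{i+1}}$, so for each seminorm $p_\gamma$ and an associated control $\omega$, $a_\gamma>1$,
\[
p_\gamma(k_{st}) \leq \tsum_{(\pp)} \omega(t_i,t_{i+1})^{a_\gamma} \leq \Big(\sup_{i}\omega(t_i,t_{i+1})\Big)^{a_\gamma - 1}\omega(s,t),
\]
which vanishes as $|\pp|\to0$ by continuity of $\omega$. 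Since this holds for every $\gamma\in\Gamma$, we conclude $k = 0$, hence $h = \mathcal I(B)$.

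\textbf{Plan for (4).} Continuity of $f$ on the compact interval $I$ gives $M := \sup_{r\in I}|f_r|_{\mathcal L(E,F)} < \infty$. Reflexivity of $E$, combined with $g\in\AC(I;E)$, ensures the existence of a weak derivative $\dot g\in L^1(I;E)$ together with the Bochner representation $\delta g_{st} = \int_s^t \dot g_r\,\d r$; hence $|f_r\dot g_r|_F \leq M|\dot g_r|_E$ is integrable, and $G_t := \int_0^t f_r\dot g_r\,\d r$ is well-defined in $F$. To identify $\mathcal I(f\delta g)$ with $\delta G$, I invoke the explicit formula (3): for any partition $\pp\equiv(t_i)$ of $[s,t]$,
\[
\tsum_{(\pp)} f_{t_i}\delta g_{t_it_{i+1}} = \tsum_{(\pp)}\int_{t_i}^{t_{i+1}}f_{t_i}\dot g_r\,\d r = \int_s^t f^{\pp}_r \dot g_r\,\d r,
\]
where $f^{\pp}_r := f_{t_i}$ for $r\in[t_i,t_{i+1})$. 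Uniform continuity of $f$ on $I$ gives $f^{\pp}\to f$ uniformly as $|\pp|\to0$, and the Bochner dominated convergence theorem (with integrable majorant $M|\dot g|_E$) yields the convergence of the right-hand side to $\int_s^t f_r\dot g_r\,\d r$ in $F$. Combined with (3), this gives $\mathcal I(f\delta g)_{st} = \int_s^t f_r\dot g_r\,\d r$.

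\textbf{Main obstacle.} The only genuinely non-formal step is the seminorm estimate driving uniqueness in (2); once one observes that $\V^{1+}_2$-regularity is preserved under addition and that telescoping against a continuous superadditive control forces the $\V^{1+}$-class ``zero-$\delta$'' element to vanish, the rest of the corollary is a clean repackaging of the Sewing Lemma together with a standard Riemann-sum/Bochner-dominated-convergence argument for (4).
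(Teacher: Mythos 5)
Your proposal is correct and takes essentially the same route as the paper: the paper simply declares the first three bullets to be immediate consequences of the Sewing Lemma (your arguments --- $\delta\Lambda=\mathrm{id}$, the explicit formula \eqref{explicit_Lambda}, and the telescoping/superadditivity estimate showing that a $\delta$-closed element of $\V^{1+}_2$ vanishes, which indeed uses the regularity of the controls as in the paper's conventions --- are exactly the standard details being alluded to). For the fourth bullet your computation is the paper's: reflexivity (Radon--Nikodym property) yields $\delta g_{st}=\int_s^t\dot g_r\,\d r$, and the Riemann sums $\sum f_{t_i}\delta g_{t_it_{i+1}}$ converge to $\int_s^t f_r\dot g_r\,\d r$ by uniform continuity of $f$ together with $|\dot g|_E\in L^1$, which is precisely the estimate the paper performs.
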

\begin{proof}
The three first statements are immediate consequences of Proposition \ref{pro:sewing}, (for a proof in the Banach space setting, we refer e.g.\ to \cite{gubinelli2004controlling,friz2014course}).

Let us check the last point.
First, note that the weak derivative of $g$ exists, because any reflexive space fulfills the Radon-Nikodym property (see \cite[Definition 3 p.\ 61 and Corollary 13 p.\ 76]{diestel1977vector}).
From the formula \eqref{explicit_I}, it holds that $\I(f\delta g)$ is the limit, as $n\to\infty$ of the partial sums
\[
I_n:= \sum_{(\pp_n)}f_{t_i^n} \delta g_{t_i^nt_{i+1}^n}\equiv \sum_{(\pp_n)}f_{t_i^n}\int_{t_i^n}^{t_{i+1}^n}\dot g_r \d r
=\int_I f_r\dot g_r\d r -\sum_{(\pp_n)}\int_{t_i^n}^{t_{i+1}^n}(f_r-f_{t_i^n})\dot g_r\d r
\,,
\]
where $\pp_n\equiv(t_i^n)$ is such that $|\pp_n|\to0.$
The mapping $f:I\equiv[0,T]\to \L(E,F)$ is continuous, hence uniformly continuous, so that the second term above goes to $0$ as $n\to\infty.$ 
Therefore, $\I(f\delta g)\equiv\lim I_n=\int_I f_r \dot g_r\d r,$ which proves \eqref{bochner_integral}.
\end{proof}

\subsection{Families of smoothing operators}
\label{ss:smoothing}

Let $R_\eta$ denote the family of smoothing operators defined
on $\varphi \in W^{k,\infty}\equiv W^{k,\infty}(\O),k\in\N,$ by
\begin{equation}
\label{nota:J_rho}
R_\eta\varphi(x):=[\varphi*\varrho_\eta](x)=
\left[\varphi *\varrho\Big(\frac{\cdot}{\eta }\Big)\eta^{-d}\right](x)
\equiv\int_{\O}\varphi(\xi)\varrho\Big(\frac{x-\xi}{\eta } \Big) \frac{\d\xi}{\eta ^d},\quad x\in \O\,,
\end{equation}
where $\varrho\in C^\infty(\R^d;\R)$
is a non-negative, \emph{radially symmetric} function that integrates to $1$, and such that $\spt\varrho\subset B_1.$
As a consequence, $R_\eta$ reproduces affine linear functions exactly and it is then possible to recover the error of order $\eta^2$ for $|(R_\eta-\id)\varphi|_{L^\infty}$  provided $\varphi $ belongs to $W^{2,\infty}$ (this is classical and follows from a Taylor expansion of the integrand).
More precisely, we have the following.
\begin{lemma}
\label{lem:varrho}
The family $(R_\eta )_{\eta\in(0,1)}$ is a 2-step family of smoothing operators over the scale $W^{k,\infty}(\R^d).$
\end{lemma}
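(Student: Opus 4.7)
The goal is to verify the three axioms \ref{J1}, \ref{J2}, \ref{J3} for the convolution family $(R_\eta)_{\eta\in(0,1)}$ with the choice $m=2$ on the scale $W^{k,\infty}(\R^d)$. Property \ref{J1} is immediate: since $\varrho_\eta\in C_c^\infty(\R^d)$, the convolution $R_\eta\varphi=\varphi\ast\varrho_\eta$ lies in $C^\infty(\R^d)$ whenever $\varphi\in W^{k,\infty}$, and its derivatives up to order $k+2$ are bounded (this will follow as a by-product of \ref{J2}).

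For \ref{J2}, I would use that for any multi-index $\alpha$ with $|\alpha|=\ell$, and any split $\alpha=\beta+\gamma$ with $|\beta|=k$ and $|\gamma|=\ell-k$, one has $\partial^\alpha R_\eta\varphi=(\partial^\beta\varphi)\ast(\partial^\gamma\varrho_\eta)$. Since $\partial^\gamma\varrho_\eta(x)=\eta^{-d-|\gamma|}(\partial^\gamma\varrho)(x/\eta)$, a change of variables gives $|\partial^\gamma\varrho_\eta|_{L^1}\leq C\eta^{-(\ell-k)}$, and Young's inequality yields $|\partial^\alpha R_\eta\varphi|_{L^\infty}\leq C\eta^{-(\ell-k)}|\varphi|_{W^{k,\infty}}$. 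Summing over $|\alpha|\leq\ell$ produces the required bound with an absolute constant depending only on $\varrho$.

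The nontrivial part is \ref{J3}, but the range of exponents actually needed ($|k-\ell|\leq m=2$) is exactly what radial symmetry of $\varrho$ buys us. Since $R_\eta$ commutes with $\partial^\alpha$, it suffices to bound $|(R_\eta-\id)\psi|_{L^\infty}$ for $\psi\in W^{k-\ell,\infty}$ and $k-\ell\in\{0,1,2\}$. For $k-\ell=0$ the triangle inequality gives a bound by $2|\psi|_{L^\infty}$. For $k-\ell=1$, a direct mean-value argument in
\[
R_\eta\psi(x)-\psi(x)=\int_{B_1}\bigl(\psi(x-\eta\xi)-\psi(x)\bigr)\varrho(\xi)\,\d\xi
\]
produces a factor $\eta$. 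For $k-\ell=2$, I expand $\psi(x-\eta\xi)$ to second order with integral remainder; the first-order term is
\[
-\eta\nabla\psi(x)\cdot\int_{B_1}\xi\,\varrho(\xi)\,\d\xi=0
\]
by the radial symmetry of $\varrho$, and the remainder contributes $C\eta^2|\psi|_{W^{2,\infty}}$ because $\spt\varrho\subset B_1$. Combining these three cases with the commutation $\partial^\alpha(R_\eta-\id)=(R_\eta-\id)\partial^\alpha$ yields \ref{J3} in the required range.

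The only delicate point is making sure the exponent $\eta^{k-\ell}$ in \ref{J3} is correct for $k-\ell=2$, and this is precisely where the radial symmetry hypothesis on $\varrho$ (which makes $R_\eta$ exactly reproduce affine functions) is essential; without it one would only obtain the weaker rate $\eta$. Once these three cases are written out, the lemma follows.
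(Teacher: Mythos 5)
Your proof is correct and follows essentially the same route the paper has in mind: the paper treats this lemma as classical, pointing exactly to the Young-inequality bounds for \ref{J2} and to a Taylor expansion of the integrand in which radial symmetry of $\varrho$ (exact reproduction of affine functions) yields the $\eta^{2}$ rate for \ref{J3}. Your write-up just fills in these standard details, so there is nothing to flag.
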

\begin{remark}
One could also consider different mollifiers (no longer nonnegative) which would reproduce polynomials of higher order exactly, in order to obtain higher rates of convergence of $|(R_\eta-\id)\varphi|_{W^{n,\infty}}$ under suitable regularity assumption on $\varphi$.
Second order estimates in $\eta $ are however sufficient here.
\end{remark}

Since $R_\eta $ increases the support of test functions, it cannot define a smoothing family for the scale $(\F_k)_{k\in\N}$ defined in \eqref{scale2_2}.
To deal with that problem, we need to introduce a suitable cut-off function.
Let $\theta _\eta \in C_c^\infty(\R)$ such that
\begin{equation}
 \label{nota:theta}
0\leq \theta _\eta \leq 1\,,\quad 
\spt \theta _\eta \subset B_{1-2\eta }\subset \R\,,\quad 
\theta \equiv1 \quad \text{on}\quad B_{1-3\eta }\subset \R\,,
\end{equation} 
and such that for $k=1,2:$
\[
|\nabla ^k\theta _\eta |\leq \frac{C}{\eta ^k}\,.
\]
Next, we define
\begin{equation}
\label{nota:Theta}
\Theta _\eta (x):=\theta _\eta (|x|^2)\,,\quad \text{for}\quad x\in\O\,.
\end{equation}

The following has been shown in \cite{deya2016priori}.

\begin{lemma}
\label{lem:Theta}
There is a constant $C_\theta >0$ such that for $k=0,1,2,$ and every $\psi $ in $W^{k,\infty}(\O)$ compactly supported in $B_1:$
\begin{equation}
\label{Theta_1}
|\Theta _\eta \psi|_{W^{k,\infty}}\leq C_\theta |\psi |_{W^{k,\infty}}\,.
\end{equation}
If in addition we assume $\psi \in W^{k,\infty}(\O),$ with $0\leq \ell\leq k\leq 3$ then
\begin{equation}
\label{Theta_2}
|(1-\Theta _\eta )\psi |_{W^{\ell,\infty}}\leq C_\theta \eta ^{k-\ell}|\psi |_{W^{k,\infty}}\,.
\end{equation} 
\end{lemma}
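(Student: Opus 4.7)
The plan is to combine Leibniz's rule with two ingredients: pointwise bounds on the derivatives of the cutoff $\Theta_\eta$, and the order of vanishing of $\psi$ and its derivatives at $\partial B_1$. The starting observation would be that any $\psi \in W^{k,\infty}(\R^d)$ compactly supported in $\bar B_1$ has its derivatives $\nabla^j \psi$ continuous for $0\le j\le k-1$, via the embedding $W^{k,\infty}\hookrightarrow C^{k-1,1}$, and these derivatives vanish identically on $\partial B_1$, since they must match the zero extension of $\psi$ to $\R^d\setminus \bar B_1$.

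I would first establish sharp pointwise bounds on $\nabla^j \Theta_\eta$: using the chain rule and induction, $\nabla^j \Theta_\eta$ decomposes as a sum $\sum_{1\le i\le j} P_{j,i}(x)\, \theta_\eta^{(i)}(|x|^2)$ with $P_{j,i}$ a polynomial of degree at most $j$. Since each $\theta_\eta^{(i)}$ is supported in $[1-3\eta,1-2\eta]$ with $|\theta_\eta^{(i)}|_{L^\infty}\le C\eta^{-i}$, the derivative $\nabla^j\Theta_\eta$ is supported in the annulus $A_\eta := \{\sqrt{1-3\eta}\le |x|\le \sqrt{1-2\eta}\}$ of thickness $O(\eta)$, with $|\nabla^j\Theta_\eta|_{L^\infty}\le C\eta^{-j}$ for $j=0,1,2,3$. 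Next I would invoke an iterated Taylor expansion in the radial direction toward $\partial B_1$, combined with the vanishing of $\nabla^i\psi$ on $\partial B_1$ for $i<k$, to obtain the key pointwise bound: for every $x$ with $\mathrm{dist}(x,\partial B_1)\le c\eta$ and every $0\le j\le k-1$,
\[|\nabla^j \psi(x)| \le C\eta^{k-j}\,|\psi|_{W^{k,\infty}}.\]

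With these ingredients in hand, both bounds would follow from Leibniz's rule. For \eqref{Theta_1}, writing $\nabla^m(\Theta_\eta\psi) = \sum_{j\le m}\binom{m}{j}\nabla^{m-j}\Theta_\eta\cdot \nabla^j\psi$ with $m\le k\le 2$, the terms with $j<m$ are supported in $A_\eta$ and bounded by $C\eta^{-(m-j)}\cdot \eta^{m-j}|\psi|_{W^{m,\infty}} = C|\psi|_{W^{m,\infty}}$, while the pure term $\Theta_\eta\nabla^m\psi$ is bounded trivially by $|\nabla^m\psi|_{L^\infty}$. For \eqref{Theta_2}, applying the same Leibniz decomposition to $(1-\Theta_\eta)\psi$ and noting that $(1-\Theta_\eta)$ together with its derivatives vanishes on $B_{\sqrt{1-3\eta}}$, each summand is supported where $\mathrm{dist}(x,\partial B_1)\le c\eta$ and bounded by $C\eta^{-(\ell-j)}\cdot \eta^{k-j}|\psi|_{W^{k,\infty}} = C\eta^{k-\ell}|\psi|_{W^{k,\infty}}$. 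The main technical obstacle is precisely this delicate matching between the $\eta^{-(m-j)}$-blow-up of $\nabla^{m-j}\Theta_\eta$ and the $\eta^{m-j}$-vanishing of $\nabla^j\psi$ on the annular neighborhood of $\partial B_1$, which essentially uses the compact support hypothesis on $\psi$ through the continuous boundary vanishing of its derivatives up to order $k-1$.
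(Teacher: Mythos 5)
The paper does not prove this lemma at all: it simply cites \cite{deya2016priori}, so there is no in-paper argument to compare against. Your proposal is a correct and complete proof, and it follows the natural route: the derivatives $\nabla^{j}\Theta_\eta$ are supported in the annulus $\{1-3\eta\le |x|^2\le 1-2\eta\}$ with $|\nabla^{j}\Theta_\eta|\le C\eta^{-j}$, while $\psi$, being in $W^{k,\infty}$ and supported in $B_1$, has $\nabla^{i}\psi$ continuous and vanishing on $\partial B_1$ for $i\le k-1$, so the iterated Taylor expansion along the segment to the nearest boundary point (which stays in the $c\eta$-neighborhood of $\partial B_1$) gives $|\nabla^{j}\psi(x)|\le C\eta^{k-j}|\psi|_{W^{k,\infty}}$ there; Leibniz then makes the blow-up $\eta^{-(m-j)}$ and the vanishing $\eta^{m-j}$ (resp.\ $\eta^{k-j}$ for \eqref{Theta_2}) cancel exactly as you state. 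Two minor remarks. First, when $\ell=k=3$ in \eqref{Theta_2} your bound $|\nabla^{3}\Theta_\eta|\le C\eta^{-3}$ requires $|\theta_\eta'''|\le C\eta^{-3}$, which is not among the hypotheses listed after \eqref{nota:theta} (the paper only states the bounds for the first two derivatives); this is an omission in the paper's setup rather than a gap in your argument, since any standard construction of $\theta_\eta$ satisfies it and some such bound is unavoidable for a third-order estimate. Second, in the term $(1-\Theta_\eta)\nabla^{\ell}\psi$ the localization near $\partial B_1$ comes from intersecting $\{\Theta_\eta\neq 1\}$ with $\spt\psi\subset B_1$ (outside $B_1$ the term vanishes anyway); you use this implicitly and it is fine, but it is worth stating.
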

\begin{corollary}
\label{cor:J_eta}
The linear mappings $J_\eta : \F_0(\Omega ) \to \F_0(\Omega ),\eta \in(0,1),$ defined by
\[
J_\eta \phi := \chi \circ \big(R_\eta \otimes (R_\eta \Theta_\eta)  (\phi\circ\chi ^{-1})\big)
\]
where we keep the notations of Lemma \ref{lem:varrho}, \eqref{new_coordinates} and \eqref{nota:Theta},
provide a 2-step family of smoothing operators with respect to the scale $(\F_k(\Omega ))_{k\in\N}.$
\end{corollary}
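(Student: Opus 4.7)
The plan is to reduce the statement to the tensor product situation via the change of variables $\chi$, and then to piece together the three conditions \ref{J1}, \ref{J2}, \ref{J3} from Lemmas \ref{lem:varrho} and \ref{lem:Theta}. Since $\sqrt 2\chi$ is an orthogonal transformation, the pullback $\phi\mapsto\phi\circ\chi^{-1}$ is a bounded isomorphism between $\F_k(\Omega)$ and the space
\[
\tilde \F_k:=\{\Psi\in W^{k,\infty}(\R^d\times B_1)\,:\,\spt\Psi\subset \R^d\times B_1\}\,,
\]
which preserves the $W^{k,\infty}$ seminorm up to a universal constant. I would therefore work on the side of $\tilde\F_k$, and study the operator $\tilde J_\eta:=R_\eta\otimes(R_\eta\Theta_\eta)$, which acts on $\Psi(x_+,x_-)$ by mollifying in $x_+$ and by applying the composition ``mollify then cut off'' in $x_-$. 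Proving the three required properties for $\tilde J_\eta$ relative to the scale $(\tilde\F_k)_{k\in\N}$ yields the statement for $J_\eta$.

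For \ref{J1}, the smoothing property of $R_\eta$ immediately yields that $\tilde J_\eta\Psi\in C^\infty(\R^{2d})$ whenever $\Psi\in\tilde\F_0$. The non-trivial point is the support condition: since $\Theta_\eta$ is supported in $B_{1-2\eta}$ and $\varrho_\eta$ in $B_\eta$, the $x_-$ support of $R_\eta\Theta_\eta\Psi$ is contained in $B_{1-2\eta}+B_\eta\subset B_{1-\eta}\subset B_1$, so $\tilde J_\eta\Psi$ indeed lies in $\tilde\F_m$ for every $m$.

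For \ref{J2}, I would expand $\partial_+^{\alpha_+}\partial_-^{\alpha_-}(\tilde J_\eta\Psi)$ by moving derivatives onto the mollifier in each slot, using the tensor-product structure. The bound $|R_\eta|_{\L(W^{k,\infty},W^{\ell,\infty})}\leq C/\eta^{\ell-k}$ from Lemma \ref{lem:varrho} handles the $x_+$ direction, and the same bound combined with \eqref{Theta_1} handles the $x_-$ direction once one invokes Leibniz on $\Theta_\eta\Psi$ and the bound $|\partial^\beta\Theta_\eta|\leq C/\eta^{|\beta|}$. Summing over multi-indices with $|\alpha_+|+|\alpha_-|=\ell$ and $|\beta_+|+|\beta_-|=k$ gives the required $\eta^{-(\ell-k)}$ rate for $0\le k\le\ell\le 3$. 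For \ref{J3} I would split
\[
\id-\tilde J_\eta=(\id-R_\eta)\otimes\id+R_\eta\otimes\big((1-\Theta_\eta)+(\id-R_\eta)\Theta_\eta\big)
\]
and treat each summand separately: the first and the third by \eqref{estimates_J2} of Lemma \ref{lem:varrho}, and the middle one by \eqref{Theta_2} of Lemma \ref{lem:Theta}. Each term contributes at most $C\eta^{k-\ell}|\Psi|_{W^{k,\infty}}$, whence the desired estimate.

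The main obstacle is the interplay between mollification and the cutoff in the $x_-$ direction: the cutoff $\Theta_\eta$ is what makes the support condition compatible with convolution, but it simultaneously introduces error terms of order $\eta^{k-\ell}$ that must be balanced against the mollifier's own error. The fact that Lemma \ref{lem:Theta} delivers exactly the same $\eta^{k-\ell}$ rate (thanks to the fact that elements of $\tilde\F_k$ vanish on $\R^d\times\partial B_1$ in the $W^{k,\infty}$ sense) is precisely what allows us to paste everything together without loss; once this is observed, the rest of the verification is a straightforward, if tedious, bookkeeping on multi-indices.
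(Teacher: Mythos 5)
Your proposal is correct and follows essentially the same route as the paper's proof: reduce via the rotation $\sqrt2\chi$ to the tensorized scale on $\O\times B_1$, obtain \ref{J1} from the support inclusion $\spt(\Theta_\eta\,\cdot)+\spt\varrho_\eta\subset B_1$, and derive \ref{J2}--\ref{J3} by combining the mollifier bounds of Lemma \ref{lem:varrho} in each slot with the cutoff estimates \eqref{Theta_1}--\eqref{Theta_2} of Lemma \ref{lem:Theta}. Your explicit decomposition of $\id-J_\eta$ is just a spelled-out version of what the paper leaves as ``similar,'' so no substantive difference.
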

\begin{proof}
Since $\sqrt2\chi$ is a rotation, it is sufficient to show the corollary on the scale 
\begin{equation}
 F_k:=\left\{\phi \in W^{k,\infty}(\OO)\,,\spt\phi \subset \O\times B_1\right\}\,,
\end{equation}
endowed with the norm
$\nnn{\cdot }{k}:=|\cdot |_{W^{k,\infty}},$ and $J_\eta := R_\eta \otimes (R_\eta \Theta _\eta).$

Note first that for any fixed $x\in \O,$ and $\phi \in F_k:$
\begin{equation}
\label{support_preserving}
\spt\big(\id\otimes (R_\eta \Theta _\eta) \phi (x,\cdot )\big)
\subset \spt (\Theta _\eta \phi (x,\cdot )) + \spt(\varrho_\eta ) \subset B_1
\end{equation}
Since we have $J_\eta \phi =(R_\eta \otimes \id)(\id\otimes R_\eta \Theta _\eta) \phi ,$ we see that
\[
\spt J_\eta \phi \subset B_1\,,
\]
and because $J_\eta \phi $ is smooth, the property \ref{J1} follows.

Concerning \ref{J2},
let for instance fix $k=0,$ and
$\phi \in F_0.$ 
Using Lemma \ref{lem:varrho}, denoting by
$\psi ^y:=(\id\otimes R_{\eta }\Theta _\eta )\phi (\cdot ,y),$
we have for any $1\leq i\leq d$ and $x,y\in \O:$
\begin{multline*}
|\partial _{x_i} J_\eta \phi (x,y)|
\equiv |\partial _{x_i}(R_{\eta}\otimes \id)[\psi ^y](x)|
\leq \frac{C}{\eta }|\psi ^y|_{L^\infty_x}\leq \frac{C}{\eta }\int_\O\Theta _\eta (y')|\phi (\cdot ,y')|_{L^\infty_x}\varrho_\eta (y-y')\d y'
\\
\leq \frac{C }{\eta }\nnn{\phi}{0}\,.
\end{multline*}
Similarly, denoting by $\tilde\psi ^x:= (R_\eta\otimes \id)(1-\Theta _\eta)\phi(x,\cdot ),$
it holds
\begin{multline*}
|\partial _{y_i} J_\eta \phi (x,y)|
\leq |\partial _{y_i}(R_\eta \otimes R_\eta )\phi | +
|\partial _{y_i}(\id \otimes R_\eta )\tilde\psi ^x(y)|
\leq \frac{C }{\eta }\nnn{\phi}{0} + \frac{C}{\eta }|\tilde\psi ^x|_{L^\infty_y}
\\
\leq 
\frac{C}{\eta }\nnn{\phi }{0} + \frac{C}{\eta }\int_\O|(1-\Theta _\eta (y))\phi (x',\cdot )|_{L^\infty_y}\varrho_\eta (x-x')\d x'
\leq \frac{C'}{\eta }\nnn{\phi }{0}\,.
\end{multline*}
Inequalities corresponding to $k=1,2$ are shown in a similar way, using in addition \eqref{Theta_1}-\eqref{Theta_2}.
The bounds related to \ref{J3} are similar.
\end{proof}

\bibliographystyle{alpha}

\end{document}